\numberwithin{equation}{section}
\theoremstyle{plain}
\newtheorem{theorem}{Theorem}
\newtheorem{lemma}{Lemma}
\newtheorem{corollary}{Corollary}
\newtheorem{proposition}{Proposition}
\theoremstyle{definition}
\newtheorem{definition}{Definition}
\providecommand{\set}[2][]{
	\ifthenelse{\isempty{#1}}{
		\left\{#2\right\}
	}{
		\left\{\,#1\;\middle|\;#2\,\right\}}
	}
\DeclareMathOperator{\pow}{Sb}
\providecommand{\N}{\mathbb{N}}
\DeclareMathOperator{\lcm}{lcm}
\DeclareMathOperator{\perm}{Perm}
\DeclareMathOperator{\permg}{\mathbf{Perm}}
\DeclareMathOperator{\id}{id}
\DeclareMathOperator{\orb}{Orb}
\providecommand{\Z}{\mathbb{Z}}
\DeclareMathOperator{\autg}{\mathbf{Aut}}
\DeclareMathOperator{\aut}{Aut}
\DeclareMathOperator{\con}{Con}
\DeclareMathOperator{\cg}{Cg}
\DeclareMathOperator{\sg}{Sg}
\DeclareMathOperator{\conl}{\mathbf{Con}}
\providecommand{\F}{\mathbb{F}}
\DeclareMathOperator{\rps}{RPS}
\DeclareMathOperator{\prps}{PRPS}
\DeclareMathOperator{\tour}{Tour}
\DeclareMathOperator{\chir}{Chir}
\DeclareMathOperator{\sgn}{Sgn}
\DeclareMathOperator{\obv}{Obv}
\begin{document}
\title{Multiplayer Rock-Paper-Scissors}
\author[C. Aten]{Charlotte Aten}
\address{Mathematics Department\\
University of Rochester\\Rochester 14627\\USA}
\urladdr{\href{http://aten.cool}{aten.cool}}
\email{\href{mailto:charlotte.aten@rochester.edu}{charlotte.aten@rochester.edu}}
\thanks{Thanks to Jonathan Pakianathan and Clifford Bergman for their helpful comments. Thanks to Scott Kirila for pointing out the result of Joris, Oestreicher, and Steinig we use in \autoref{sec:rps_magmas}. A short version of this paper appeared in the proceedings of the 2018 Algebras and Lattices in Hawai'i conference\cite{atenrps}. This research was supported in part by the people of the Yosemite Valley.}
\subjclass[2010]{08A05,05C20,05C65,08A35}

\keywords{Tournament algebras, hypergraphs, lattice of varieties}

\begin{abstract}
We study a class of algebras we regard as generalized Rock-Paper-Scissors games. We determine when such algebras can exist, show that these algebras generate the varieties generated by hypertournament algebras, count these algebras, study their automorphisms, and determine their congruence lattices. We produce a family of finite simple algebras.
\end{abstract}

\maketitle

\tableofcontents

\section{Introduction}
The game of Rock-Paper-Scissors (RPS) involves two players simultaneously choosing either rock (\(r\)), paper (\(p\)), or scissors (\(s\)). Informally, the rules of the game are that ``rock beats scissors, paper beats rock, and scissors beats paper''. That is, if one player selects rock and the other selects paper then the latter player wins, and so on. If two players choose the same item then the round is a tie.

A \emph{magma} is an algebra \(\mathbf{A}\coloneqq(A,f)\) consisting of a set \(A\) and a single binary operation \(f\colon A^2\to A\). We will view the game of RPS as a magma. We let \(A\coloneqq\{r,p,s\}\) and define a binary operation \(f\colon A^2\to A\) where \(f(x,y)\) is the winning item among \(\{x,y\}\). This operation is given by the table in \autoref{fig:rps_table} and completely describes the rules of RPS. In order to play the first player selects a member of \(A\), say \(x\), at the same time that the second player selects a member of \(A\), say \(y\). Each player who selected \(f(x,y)\) is the winner. Note that it is possible for both players to win, in which case we have a tie.

\begin{figure}[ht]
	\begin{tabular}{r|*{3}{c}}
		& \(r\) & \(p\) & \(s\) \\ \hline
		\(r\) & \(r\) & \(p\) & \(r\) \\
		\(p\) & \(p\) & \(p\) & \(s\) \\
		\(s\) & \(r\) & \(s\) & \(s\)
	\end{tabular}
	\caption{The \(\rps\) operation}
	\label{fig:rps_table}
\end{figure}

In general we have a class of \emph{selection games}, which are games consisting of a collection of items \(A\), from which a fixed number of players \(n\) each choose one, resulting in a tuple \(a\in A^n\), following which the round's winners are those who chose \(f(a)\) for some fixed rule \(f\colon A^n\to A\). We refer to an algebra \(\mathbf{A}\coloneqq(A,f)\) with a single basic \(n\)-ary operation \(f\colon A^n\to A\) as an \emph{\(n\)-ary magma} or an \emph{\(n\)-magma}. We will sometimes abuse this terminology and refer to an \(n\)-ary magma \(\mathbf{A}\) simply as a \emph{magma}. Each such game can be viewed as an \(n\)-ary magma and each \(n\)-ary magma can be viewed as a game in the same manner, providing we allow for games where we keep track of who is ``player 1'', who is ``player 2'', etc. Again note that any subset of the collection of players might win a given round, so there can be multiple player ties.

The classic RPS game has several desirable properties. Namely, RPS is, in terms we proceed to define,
	\begin{enumerate}
		\item conservative,
		\item essentially polyadic,
		\item strongly fair, and
		\item nondegenerate.
	\end{enumerate}
Let \(\mathbf{A}\coloneqq(A,f)\) be an \(n\)-magma. We say that an operation \(f\colon A^n\to A\) is \emph{conservative} when for any \(a_1,\dots,a_n\in A\) we have that \(f(a_1,\dots,a_n)\in\{a_1,\dots,a_n\}\)\cite[p.94]{bergman}. Similarly we call \(\mathbf{A}\) \emph{conservative} when \(f\) is conservative. We say that an operation \(f\colon A^n\to A\) is \emph{essentially polyadic} when there exists some \(g\colon\pow_{\le n}(A)\to A\) where \(\pow_{\le n}(A)\coloneqq\{\,B\subset A\mid1\le\lvert B\rvert\le n\,\}\) such that for any \(a_1,\dots,a_n\in A\) we have \(f(a_1,\dots,a_n)=g(\{a_1,\dots,a_n\})\). Similarly we call \(\mathbf{A}\) \emph{essentially polyadic} when \(f\) is essentially polyadic. We say that \(f\) is \emph{fair} when for all \(a,b\in A\) we have \(\lvert f^{-1}(a)\rvert=\lvert f^{-1}(b)\rvert\). Let \(A_k\) denote the members of \(A^n\) which have exactly \(k\) distinct components for some \(k\in\N\). We say that \(f\) is \emph{strongly fair} when for all \(a,b\in A\) and all \(k\in\N\) we have \(\lvert f^{-1}(a)\cap A_k\rvert=\lvert f^{-1}(b)\cap A_k\rvert\). Similarly we call \(\mathbf{A}\) \emph{(strongly) fair} when \(f\) is (strongly) fair. Note that if \(f\) (respectively, \(\mathbf{A}\)) is strongly fair then \(f\) (respectively, \(\mathbf{A}\)) is fair, but the reverse implication does not hold. We say that \(f\) is \emph{nondegenerate} when \(\lvert A\rvert>n\). Similarly we call \(\mathbf{A}\) \emph{nondegenerate} when \(f\) is nondegenerate.

Thinking in terms of selection games we say that \(\mathbf{A}\) is conservative when each round has at least one winning player. We say that \(\mathbf{A}\) is essentially polyadic when a round's winning item is determined solely by which items were played, not taking into account which player played which item or how many players chose a particular item. We say that \(\mathbf{A}\) is fair when each item has the same probability of being the winning item (or tying). We say that \(\mathbf{A}\) is strongly fair when each item has the same chance of being the winning item when exactly \(k\) distinct items are chosen for any \(k\in\N\). Note that this is not the same as saying that each player has the same chance of choosing the winning item (respectively, when exactly \(k\) distinct items are chosen). When \(\mathbf{A}\) is \emph{degenerate} (i.e. not nondegenerate) we have that \(\lvert A\rvert\le n\). In this case we have that all members of \(A_{\lvert A\rvert}\) have the same set of components. If \(\mathbf{A}\) is essentially polyadic with \(\lvert A\rvert\le n\) it is impossible for \(\mathbf{A}\) to be strongly fair unless \(\lvert A\rvert=1\).

Extensions of RPS which allow players to choose from more than the three eponymous items are attested historically. The French variant of RPS gives a pair of players \(4\) items to choose among\cite[p.140]{umbhauer}. In addition to the usual rock, paper, and scissors there is also the well (\(w\)). The well beats rock and scissors but loses to paper. The corresponding Cayley table is given in \autoref{fig:french_rps}. This game is not fair, as \(\lvert f^{-1}(r)\rvert=3\) yet \(\lvert f^{-1}(p)\rvert=5\). It is nondegenerate since there are \(4\) items for \(2\) players to chose among. It is also conservative and essentially polyadic.

\begin{figure}[ht]
	\begin{tabular}{r|*{4}{c}}
		& \(r\) & \(p\) & \(s\) & \(w\) \\ \hline
		\(r\) & \(r\) & \(p\) & \(r\) & \(w\) \\
		\(p\) & \(p\) & \(p\) & \(s\) & \(p\) \\
		\(s\) & \(r\) & \(s\) & \(s\) & \(w\) \\
		\(w\) & \(w\) & \(p\) & \(w\) & \(w\)
	\end{tabular}
	\caption{The French variant of \(\rps\)}
	\label{fig:french_rps}
\end{figure}

There has been some recent recreational interest in RPS variants with larger numbers of items from which two players may choose. For example, the game Rock-Paper-Scissors-Spock-Lizard\cite{rpssl} (RPSSL) is attested in the popular culture. The Cayley table for this game is given in \autoref{fig:rpssl}, with \(v\) representing Spock and \(l\) representing lizard. This game is conservative, essentially polyadic, strongly fair, and nondegenerate.

\begin{figure}[ht]
	\begin{tabular}{r|*{5}{c}}
		& \(r\) & \(p\) & \(s\) & \(v\) & \(l\) \\ \hline
		\(r\) & \(r\) & \(p\) & \(r\) & \(v\) & \(r\) \\
		\(p\) & \(p\) & \(p\) & \(s\) & \(p\) & \(l\) \\
		\(s\) & \(r\) & \(s\) & \(s\) & \(v\) & \(s\) \\
		\(v\) & \(v\) & \(p\) & \(v\) & \(v\) & \(l\) \\
		\(l\) & \(r\) & \(l\) & \(s\) & \(l\) & \(l\)
	\end{tabular}
	\caption{The RPSSL operation}
	\label{fig:rpssl}
\end{figure}

Variants of RPS with larger numbers of items appear in the literature as balanced tournaments\cite{chamberland}. Under this combinatorial definition it is well-established that only variants with an odd number of items may exist when the quantity of items to choose from is finite. We detail the connection between our generalization of RPS and tournaments in \autoref{sec:tournaments}. In our language we have an analogous odd-order result.

\begin{proposition}
Let \(\mathbf{A}\) be a selection game with \(n=2\) which is essentially polyadic, strongly fair, and nondegenerate and let \(m\coloneqq\lvert A\rvert\in\N\). We have that \(m\neq1\) is odd. Conversely, for each odd \(m\neq1\) there exists such a selection game.
\end{proposition}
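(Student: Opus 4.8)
The plan is to read off the consequences of the hypotheses for the two sizes \(\lvert A_1\rvert\) and \(\lvert A_2\rvert\), since for \(n=2\) the set \(A^2\) is the disjoint union \(A_1\sqcup A_2\), where \(A_1\) consists of the \(m\) diagonal tuples \((a,a)\) and \(A_2\) consists of the \(m(m-1)\) off-diagonal tuples \((a,b)\) with \(a\neq b\). First I would dispatch the easy half of the first claim: nondegeneracy says \(m=\lvert A\rvert>n=2\), so \(m\geq3\) and in particular \(m\neq1\). Everything then reduces to proving that \(m\) is odd.

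For the parity, the two remaining hypotheses pull in opposite directions on the set \(A_2\). Essential polyadicity gives a map \(g\) with \(f(a,b)=g(\{a,b\})=f(b,a)\), so \(f\) is symmetric; consequently, for every value \(v\in A\) the fiber \(f^{-1}(v)\cap A_2\) is a disjoint union of the two-element sets \(\{(a,b),(b,a)\}\) and therefore has \emph{even} cardinality. On the other hand, the fibers \(f^{-1}(v)\cap A_2\) for \(v\in A\) partition \(A_2\), so strong fairness at \(k=2\) forces each of them to have the common size \(\lvert A_2\rvert/m=m-1\). An even number that equals \(m-1\) leaves no choice: \(m-1\) is even, i.e.\ \(m\) is odd. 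This is the whole content of the first assertion, and I expect the only thing to watch is the bookkeeping that the \(A_k\) slice each fiber into a genuine partition.

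For the converse I would exhibit the rotational (cyclic) tournament. Given odd \(m\geq3\), set \(A\coloneqq\Z/m\Z\) and define \(f(a,a)\coloneqq a\), and for \(a\neq b\) let \(f(a,b)\coloneqq a\) exactly when \(b-a\in\{1,2,\dots,(m-1)/2\}\) and \(f(a,b)\coloneqq b\) otherwise. Oddness of \(m\) guarantees that for each pair exactly one of \(a,b\) wins, so \(f\) is well defined and symmetric, hence essentially polyadic; and \(m>2\) gives nondegeneracy. Each element beats precisely \((m-1)/2\) others, so \(\lvert f^{-1}(v)\cap A_2\rvert=2\cdot(m-1)/2=m-1\) is independent of \(v\), while \(\lvert f^{-1}(v)\cap A_1\rvert=1\) because \(a\mapsto f(a,a)\) is the identity; thus \(f\) is strongly fair. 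The case \(m=3\) recovers the classical \(\rps\) operation.

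The two halves are really the same statement about tournaments seen from opposite sides: strong fairness at \(k=2\) is exactly the condition that \(g\) induce a \emph{regular} tournament on \(A\) (equal out-degrees \((m-1)/2\)), and regular tournaments exist precisely when the number of vertices is odd, which is the combinatorial shadow of the parity obstruction above. I therefore do not expect a serious obstacle; the one point requiring care is verifying that the rotational rule yields a legitimate tournament, i.e.\ that for \(a\neq b\) exactly one of \(b-a\) and \(a-b\) lands in \(\{1,\dots,(m-1)/2\}\), which again is where oddness of \(m\) is used.
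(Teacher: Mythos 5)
Your proof is correct. The forward direction is essentially the paper's argument: both combine the symmetry \(f(a,b)=f(b,a)\) supplied by essential polyadicity with the equal-fiber-size condition from strong fairness at \(k=2\); the paper phrases the resulting divisibility as \(m\mid\binom{m}{2}\) over unordered \(2\)-sets, while you phrase it as the common fiber size \(\lvert A_2\rvert/m=m-1\) being forced to be even, which is the same constraint. Where you genuinely diverge is the converse. The paper argues abstractly: since \(m\mid\binom{m}{2}\) for odd \(m\), one may take \emph{any} regular partition of \(\binom{A}{2}\) into \(m\) classes (together with the trivial partition of \(\binom{A}{1}\)) and read off \(f\); this buys a description of \emph{all} essentially polyadic, strongly fair, nondegenerate binary games, as the paper remarks immediately after the proof, but it does not by itself produce a conservative example. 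You instead exhibit the explicit rotational tournament on \(\Z/m\Z\), which is concrete, conservative for free, and is in fact the binary regular \(\rps\) magma \((\Z_m)_2(\beta,\gamma)\) that the paper only constructs later in \autoref{sec:examples}. One cosmetic point: with your orientation convention (\(a\) wins when \(b-a\in\{1,\dots,(m-1)/2\}\)) the case \(m=3\) yields the reverse cyclic orientation of the table in the introduction (\(f(r,p)=r\) rather than \(p\)); the two are isomorphic via \(x\mapsto-x\), so your claim of recovering RPS holds only up to isomorphism.
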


\begin{proof}
Since \(\mathbf{A}\) is nondegenerate and \(n=2\) we must have that \(m>n=2\) and hence \(m\neq1\). Games with \(m=1\) have only one item to choose from and all players always tie.

Since \(\mathbf{A}\) is strongly fair we must have that \(\lvert f^{-1}(a)\cap A_2\rvert=\lvert f^{-1}(b)\cap A_2\rvert\) for all \(a,b\in A\). As the \(m\) distinct sets \(f^{-1}(a)\cap A_2\) for \(a\in A\) partition \(A_2\) and are all the same size we require that \(m\mid\lvert A_2\rvert\). Moreover, as we take \(\mathbf{A}\) to be essentially polyadic we have that \(f(a,b)=f(b,a)\) for all \(a,b\in A\). This implies that each of the \(m\) items must be the winner among the same number of unordered pairs of distinct elements \(\{a,b\}\). Let \(\binom{A}{2}\) denote the collection of unordered pairs of distinct elements in \(A\). That is, we write \(\binom{A}{2}\) to indicate the collection of \emph{\(2\)-sets} of \(A\). We have that \(\lvert\binom{A}{2}\rvert=\binom{m}{2}\) so we require that \(m\mid\lvert\binom{A}{2}\rvert=\binom{m}{2}\), which implies that \(m\) is odd.

It remains to show that such games \(\mathbf{A}\) exist when \(m\neq1\) is odd. Let \(\binom{A}{1}\) denote the collection of singletons of elements in \(A\). That is, we write \(\binom{A}{1}\) to indicate the collection of \emph{\(1\)-sets} of \(A\). Since \(\lvert\binom{A}{1}\rvert=\lvert A\rvert=m\) we can partition \(\binom{A}{1}\) into \(m\) subcollections \(C_1\coloneqq\{C_{1,r}\}_{r\in A}\) indexed on the \(m\) elements of \(A\), each with \(\lvert C_{1,r}\rvert=1\). Since we assume that \(m\neq1\) is odd we have that \(m=2s+1\) for some \(s\in\N\). This implies that
	\[
		\binom{m}{2}=\binom{2s+1}{2}=(2s+1)s=ms
	\]
so we can partition \(\binom{A}{2}\) into \(m\) subcollections \(C_2\coloneqq\{C_{2,r}\}_{r\in A}\) indexed on the \(m\) elements of \(A\), each with \(\lvert C_{2,r}\rvert=s\). With respect to these partitions \(C\coloneqq\{C_1,C_2\}\) we define a binary operation \(f\colon A^2\to A\) by \(f(a,b)\coloneqq r\) when \(\{a,b\}\in C_{k,r}\) for some \(k\in\{1,2\}\). This map is well-defined since each \(\{a,b\}\) contains either \(1\) or \(2\) distinct elements and thus belongs to a unique member of one of the partitions \(C_k\). In order to see that the resulting magma \(\mathbf{A}\coloneqq(A,f)\) is essentially polyadic let \(g\colon\pow_{\le2}(A)\to A\) be given by \(g(U)\coloneqq r\) when there exists \(k\in\{1,2\}\) such that \(U\in C_{k,r}\). By construction we have that \(f(a_1,a_2)=g(\{a_1,a_2\})\) for all \(a_1,a_2\in A\). We now show that \(\mathbf{A}\) is strongly fair. Given \(r\in A\) we have that \(f(a_1,a_2)=r\) with \(\{a_1,a_2\}\in A_1\) when \(\{a_1,a_2\}\in C_{1,r}\). There is only one way to obtain an ordered pair from \(\{a_1,a_2\}\) with \(a_1=a_2\) so \(\lvert f^{-1}(r)\cap A_1\rvert=1\lvert C_{1,r}\rvert=1\). Given \(r\in A\) we have that \(f(a_1,a_2)=r\) with \(\{a_1,a_2\}\in A_2\) when \(\{a_1,a_2\}\in C_{2,r}\). There are two ways to obtain an ordered pair from \(\{a_1,a_2\}\) with \(a_1\neq a_2\) so \(\lvert f^{-1}(r)\cap A_2\rvert=2\lvert C_{2,r}\rvert\). As each of the \(C_{2,r}\) have the same size we conclude that \(\mathbf{A}\) is strongly fair. Since \(m=2s+1\) for some \(s\in\N\) we have that \(m\ge3>2=n\) so \(\mathbf{A}\) is also nondegenerate. We see that an essentially polyadic, strongly fair, nondegenerate magma always exists when \(m\neq1\) is odd.
\end{proof}

Historically those games which have been played or described tend to be conservative but we did not need that assumption for our argument. We say a partition \(P\coloneqq\{P_i\}_{i\in I}\) of a set \(S\) is \emph{regular} when \(\lvert P_i\rvert=\lvert P_j\rvert\) for all \(i,j\in I\). Note that we gave a description of all possible essentially polyadic, strongly fair, nondegenerate magmas \(\mathbf{A}\) with \(m\neq1\) odd as any such magma will induce a regular partition \(C_1\) of \(\binom{A}{1}\) and a regular partition \(C_2\) of \(\binom{A}{2}\) and such a pair of partitions of \(\binom{A}{1}\) and \(\binom{A}{2}\) will yield a map \(f\colon A^2\to A\) with the desired properties. It is also the case that there is at least one conservative, essentially polyadic, strongly fair, and nondegenerate magma \(\mathbf{A}\) for each odd \(m\neq1\). We produce examples of these games in \autoref{sec:examples}.

We explore selection games for more than \(2\) simultaneous players, in particular those which we see as generalized Rock-Paper-Scissors games. We give a numerical constraint on which \(n\)-magmas of order \(m\) can be essentially polyadic, strongly fair, and nondegenerate and show that this constraint is sharp. After giving examples of such magmas for all possible pairs \((m,n)\) we go on to detail connections between RPS, tournament magmas, and hypertournaments. We proceed to count these magmas and study their automorphisms and congruences, concluding with the exhibition of an infinite family of finite simple magmas.

\section{RPS magmas}
\label{sec:rps_magmas}
The magmas we are interested in are those corresponding to selection games which have the four desirable properties possessed by Rock-Paper-Scissors. As in the preceding section it will benefit us to first examine the larger class of magmas obtained by dropping the conservativity axiom.

\begin{definition}[\(\prps\) magma]
Let \(\mathbf{A}\coloneqq(A,f)\) be an \(n\)-ary magma. When \(\mathbf{A}\) is essentially polyadic, strongly fair, and nondegenerate we say that \(\mathbf{A}\) is a \emph{\(\prps\) magma} (read ``\emph{pseudo-\(\rps\) magma}''). When \(\mathbf{A}\) is an \(n\)-magma of order \(m\in\N\) with these properties we say that \(\mathbf{A}\) is a \emph{\(\prps(m,n)\) magma}. We also use \(\prps\) and \(\prps(m,n)\) to indicate the classes of such magmas.
\end{definition}

Our first theorem generalizes directly to selection games with more than \(2\) players.

\begin{theorem}
\label{thm:numerical_condition}
Let \(\mathbf{A}\in\prps(m,n)\) and let \(\varpi(m)\) denote the least prime dividing \(m\). We have that
	\begin{align}\label{eq:numerical_condition}
		n<\varpi(m).
	\end{align}
Conversely, for each pair \((m,n)\) with \(m\neq1\) such that \(n<\varpi(m)\) there exists such a magma.
\end{theorem}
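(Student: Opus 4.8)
The plan is to treat both directions through a single divisibility condition extracted from strong fairness, exactly as in the $n=2$ case proved above. First I would handle the forward direction. Fix $k$ with $1\le k\le n$; since nondegeneracy gives $m>n\ge k$, the collection $\binom{A}{k}$ of $k$-sets is nonempty. Using essential polyadicity I write $f(a_1,\dots,a_n)=g(\{a_1,\dots,a_n\})$ and record that, for a fixed $k$-set $B$, the number of tuples in $A^n$ whose component set equals $B$ is the number of surjections from $\{1,\dots,n\}$ onto $B$; this value depends only on $n$ and $k$, call it $T_k$, and it is positive precisely because $1\le k\le n$. Partitioning $A_k$ according to its component set then gives
\[
	\lvert f^{-1}(a)\cap A_k\rvert=T_k\cdot\Bigl\lvert g^{-1}(a)\cap\tbinom{A}{k}\Bigr\rvert.
\]
Strong fairness makes the left side independent of $a$, and since $T_k>0$ the number of $k$-sets that $g$ sends to each element is the same for every $a\in A$. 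As these $m$ counts sum to $\binom{m}{k}$, I conclude $m\mid\binom{m}{k}$ for every $1\le k\le n$.

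The crux, and the step I expect to be the main obstacle, is the purely number-theoretic equivalence
\[
	\Bigl(\,m\mid\tbinom{m}{k}\ \text{for all}\ 1\le k\le n\,\Bigr)\iff n<\varpi(m).
\]
For the backward implication I would argue: if $n<\varpi(m)$ then for each $k\le n$ every prime dividing $k!$ is at most $k<\varpi(m)$, hence coprime to $m$, so $\gcd(k!,m)=1$; from the identity $k!\binom{m}{k}=m(m-1)\cdots(m-k+1)$ the right side is divisible by $m$, and invertibility of $k!$ modulo $m$ forces $m\mid\binom{m}{k}$. For the forward implication I would take $p\coloneqq\varpi(m)\le n$ and test $k=p$: among the $p$ consecutive integers $m,m-1,\dots,m-p+1$ only $m$ is divisible by $p$, so the $p$-adic valuation of the numerator equals $v_p(m)$, while $v_p(p!)=1$; hence $\binom{m}{p}$ has $p$-adic valuation $v_p(m)-1<v_p(m)$ and $m\nmid\binom{m}{p}$. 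Combining this with the previous paragraph yields the asserted inequality $n<\varpi(m)$ (this elementary argument is the place where the Joris–Oestreicher–Steinig input enters, and it could be cited in lieu of the valuation computation).

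For the converse I would generalize the regular-partition construction. Given $(m,n)$ with $m\neq1$ and $n<\varpi(m)$, the backward implication above supplies $m\mid\binom{m}{k}$ for each $1\le k\le n$, so each $\binom{A}{k}$ admits a regular partition into $m$ blocks $\{C_{k,a}\}_{a\in A}$ indexed by $A$, each of size $\binom{m}{k}/m$. I then define $g\colon\pow_{\le n}(A)\to A$ by $g(B)\coloneqq a$ when $B\in C_{\lvert B\rvert,a}$, and set $f(a_1,\dots,a_n)\coloneqq g(\{a_1,\dots,a_n\})$. The resulting magma $\mathbf{A}\coloneqq(A,f)$ is essentially polyadic by construction; it is nondegenerate because $n<\varpi(m)\le m$ gives $m>n$; and it is strongly fair because $\lvert f^{-1}(a)\cap A_k\rvert=T_k\binom{m}{k}/m$ is independent of $a$ for every $k\le n$, while $A_k=\varnothing$ for $k>n$ imposes no further constraint. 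This exhibits the required $\prps(m,n)$ magma and completes both directions.
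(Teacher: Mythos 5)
Your proof is correct, and its overall skeleton matches the paper's: reduce strong fairness plus essential polyadicity to the divisibility conditions \(m\mid\binom{m}{k}\) for \(1\le k\le n\), show these are equivalent to \(n<\varpi(m)\), and then build the converse example from regular partitions of each \(\binom{A}{k}\). The one genuine divergence is at the number-theoretic crux. The paper imports the Joris--Oestreicher--Steinig formula for \(\gcd\bigl(\bigl\{\binom{m}{k}\mid 1\le k\le n\bigr\}\bigr)\) and deduces \(n<\varpi(m)\) from \(m=d(m,n)\); you instead give a self-contained argument, testing the single value \(k=p\coloneqq\varpi(m)\) and computing \(v_p\bigl(\binom{m}{p}\bigr)=v_p(m)-1\) from the fact that exactly one of the \(p\) consecutive integers \(m,m-1,\dots,m-p+1\) is divisible by \(p\), namely \(m\) itself. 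This is a correct and strictly more elementary route: it avoids the external citation entirely and isolates exactly which binomial coefficient fails to be divisible by \(m\), at the cost of not recovering the full gcd identity (which the paper also does not need). Your bookkeeping in the forward direction, writing \(\lvert f^{-1}(a)\cap A_k\rvert=T_k\,\lvert g^{-1}(a)\cap\binom{A}{k}\rvert\) with \(T_k\) the number of surjections onto a \(k\)-set, is a slightly more explicit version of the paper's remark that tuples with the same component set share a common \(f\)-value, and it doubles as the verification of strong fairness in your construction; both uses are sound.
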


\begin{proof}
Since \(\mathbf{A}\) is nondegenerate we must have that \(m>n\).

Since \(\mathbf{A}\) is strongly fair we must have that \(\lvert f^{-1}(a)\cap A_k\rvert=\lvert f^{-1}(b)\cap A_k\rvert\) for all \(k\in\N\). As the \(m\) distinct sets \(f^{-1}(a)\cap A_k\) for \(a\in A\) partition \(A_k\) and are all the same size we require that \(m\mid\lvert A_k\rvert\). When \(k>n\) we have that \(A_k=\varnothing\) and obtain no constraint on \(m\).

When \(k\le n\) we have that \(A_k\) is nonempty. As we take \(\mathbf{A}\) to be essentially polyadic we have that \(f(x)=f(y)\) for all \(x,y\in A_k\) such that \(\{x_1,\dots,x_n\}=\{y_1,\dots,y_n\}\). Let \(\binom{A}{k}\) denote the collection of \(k\)-sets in \(A\). We require that \(m\mid\lvert\binom{A}{k}\rvert=\binom{m}{k}\) for all \(k\le n\).

Let
	\[
		d(m,n)\coloneqq\gcd\left(\left\{\,\binom{m}{k}\;\middle|\;1\le k\le n\,\right\}\right).
	\]
Since \(m\mid\binom{m}{k}\) for all \(k\le n\) we must have that \(m\mid d(m,n)\). Joris, Oestreicher, and Steinig showed that when \(m>n\) we have
	\[
		d(m,n)=\frac{m}{\lcm(\{\,k^{\varepsilon_k(m)}\mid1\le k\le n\,\})}
	\]
where \(\varepsilon_k(m)=1\) when \(k\mid m\) and \(\varepsilon_k(m)=0\) otherwise\cite[p.103]{joris}. Since we have that \(m\mid d(m,n)\) and \(d(m,n)\mid m\) it must be that \(m=d(m,n)\) and hence
	\[
		\lcm\left(\left\{\,k^{\varepsilon_k(m)}\;\middle|\;1\le k\le n\,\right\}\right)=1.
	\]
This implies that \(\varepsilon_k(m)=0\) for all \(2\le k\le n\). That is, no \(k\) between \(2\) and \(n\) inclusive divides \(m\). This is equivalent to having that no prime \(p\le n\) divides \(m\), which is in turn equivalent to having that \(n<\varpi(m)\), as desired.

It remains to show that such games \(\mathbf{A}\) exist when \(m\neq1\) and \(n<\varpi(m)\). By this assumption we have that \(k\nmid m\) whenever \(2\le k\le n\). Since
	\[
		\binom{m}{k}=\frac{m!}{(m-k)!k!}=m\frac{(m-1)\cdots(m-k+1)}{k(k-1)\cdots(2)}
	\]
and none of the nontrivial factors of \(k!\) divide \(m\) it must be that \(m\mid\binom{m}{k}\) for each \(2\le k\le n\). This implies that \(m\mid\lvert\binom{A}{k}\rvert\) for each \(k\le n\) so for each \(k\le n\) we can partition \(\binom{A}{k}\) into \(m\) subcollections \(C_k\coloneqq\{C_{k,r}\}_{r\in A}\) indexed on the \(m\) elements of \(A\), each with \(\lvert C_{k,r}\rvert=\frac{1}{m}\binom{m}{k}\). With respect to this collection of partitions \(C\coloneqq\{C_k\}_{1\le k\le n}\) we define an \(n\)-ary operation \(f\colon A^n\to A\) by \(f(a_1,\dots,a_n)\coloneqq r\) when \(\{a_1,\dots,a_n\}\in C_{k,r}\) for some \(k\in\{1,\dots,n\}\). This map is well-defined since each \(\{a_1,\dots,a_n\}\) contains exactly \(k\) distinct elements for some \(k\in\{1,\dots,n\}\) and thus belongs to a unique member of one of the partitions \(C_k\). In order to see that the resulting magma \(\mathbf{A}\coloneqq(A,f)\) is essentially polyadic let \(g\colon\pow_{\le n}(A)\to A\) be given by \(g(U)\coloneqq r\) when there exists \(k\in\{1,\dots,n\}\) such that \(U\in C_{k,r}\). By construction we have that \(f(a_1,\dots,a_n)=g(\{a_1,\dots,a_n\})\) for all \(a_1,\dots,a_n\in A\). We now show that \(\mathbf{A}\) is strongly fair. Given \(r\in A\) we have that \(f(a_1,\dots,a_n)=r\) with \((a_1,\dots,a_n)\in A_k\) when \(\{a_1,\dots,a_n\}\in C_{k,r}\). Note that the number of members of \(A_k\) whose coordinates form the set \(\{a_1,\dots,a_n\}\) is the same as the number of members of \(A_k\) whose coordinates form the set \(\{b_1,\dots,b_n\}\) for some other \((b_1,\dots,b_n)\in A_k\). This implies that each of the \(\lvert f^{-1}(r)\cap A_k\rvert\) have the same size for a fixed \(k\) and hence \(\mathbf{A}\) is strongly fair. To see that \(\mathbf{A}\) is nondegenerate observe that \(n<\varpi(m)\le m\).
\end{proof}

As in the case of \(n=2\) we did not use the conservativity axiom. We have given a description of all possible finite \(\prps\) magmas. To see this, note that any \(\prps\) magma of order \(m\) satisfying our numerical condition \eqref{eq:numerical_condition} will induce a regular partition \(C_k\) of \(\binom{A}{k}\) for each \(1\le k\le n\) and any such collection of partitions will yield a map \(f\colon A^n\to A\) with the desired properties. As before it is always possible to find conservative \(\prps(m,n)\) magmas when this numerical condition \eqref{eq:numerical_condition} is met. These magmas are those which possess all the desirable properties of the game RPS.

\begin{definition}[\(\rps\) magma]
Let \(\mathbf{A}\coloneqq(A,f)\) be an \(n\)-ary magma. When \(\mathbf{A}\) is conservative, essentially polyadic, strongly fair, and nondegenerate we say that \(\mathbf{A}\) is an \emph{\(\rps\) magma}. When \(\mathbf{A}\) is an \(n\)-magma of order \(m\in\N\) with these properties we say that \(\mathbf{A}\) is an \emph{\(\rps(m,n)\) magma}. We also use \(\rps\) and \(\rps(m,n)\) to indicate the classes of such magmas.
\end{definition}

More succinctly, \(\rps\) magmas are conservative \(\prps\) magmas. We proceed to exhibit members of this class.

\section{Examples of RPS magmas}
\label{sec:examples}
We give examples of \(\rps(m,n)\) magmas for all \(m\) and \(n\) satisfying the numerical constraint \eqref{eq:numerical_condition} of \autoref{thm:numerical_condition}. This shows that such pairs \((m,n)\) are precisely those for which such magmas exist. Our construction makes use of group actions and we first give a lemma in that direction.

\begin{definition}[\(k\)-extension of an action]
Given a group action \(\alpha\colon\mathbf{G}\to\permg(A)\) of \(\mathbf{G}\) on a set \(A\) and some \(1\le k\le\lvert A\rvert\) define for each \(s\in G\) a map \(\alpha_k(s)\colon \binom{A}{k}\to \binom{A}{k}\) by
	\[
		(\alpha_k(s))(U)\coloneqq\{\,(\alpha(s))(a)\mid a\in U\,\}.
	\]
The function \(\alpha_k\colon G\to \binom{A}{k}^{\binom{A}{k}}\) is called the \emph{\(k\)-extension} of \(\alpha\).
\end{definition}

\begin{lemma}
The \(k\)-extension of a group action \(\alpha\colon\mathbf{G}\to\permg(A)\) is a group action.
\end{lemma}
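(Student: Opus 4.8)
The plan is to verify the two defining conditions of a group action for $\alpha_k$: that each $\alpha_k(s)$ is a genuine permutation of $\binom{A}{k}$, and that $\alpha_k$ is a homomorphism from $\mathbf{G}$ into $\permg(\binom{A}{k})$. The entire argument rests on the single fact that $\alpha(s)\in\perm(A)$ is a bijection for each $s\in G$, together with the homomorphism property of $\alpha$ itself.

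First I would check that $\alpha_k(s)$ really maps $\binom{A}{k}$ into $\binom{A}{k}$. Given $U\in\binom{A}{k}$, so that $\abs{U}=k$, the set $(\alpha_k(s))(U)=\{\,(\alpha(s))(a)\mid a\in U\,\}$ is the image of $U$ under the bijection $\alpha(s)$; since a bijection carries a set of size $k$ to a set of size $k$, we get $\abs{(\alpha_k(s))(U)}=k$, so $(\alpha_k(s))(U)\in\binom{A}{k}$. This shows $\alpha_k(s)$ is a well-defined self-map of $\binom{A}{k}$.

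Next I would establish the homomorphism identities, which will simultaneously yield the bijectivity of each $\alpha_k(s)$ for free. For the identity $e\in G$ we have $\alpha(e)=\id_A$, so $(\alpha_k(e))(U)=\{\,a\mid a\in U\,\}=U$, whence $\alpha_k(e)=\id_{\binom{A}{k}}$. For $s,t\in G$ I would apply both candidate expressions to an arbitrary $U$ and unwind the definition:
\[
	(\alpha_k(s)\circ\alpha_k(t))(U)=\{\,(\alpha(s))((\alpha(t))(a))\mid a\in U\,\}=\{\,(\alpha(st))(a)\mid a\in U\,\}=(\alpha_k(st))(U),
\]
where the middle equality uses $\alpha(s)\circ\alpha(t)=\alpha(st)$. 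Hence $\alpha_k(st)=\alpha_k(s)\circ\alpha_k(t)$.

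Finally, these identities immediately give that each $\alpha_k(s)$ is invertible, since $\alpha_k(s)\circ\alpha_k(s^{-1})=\alpha_k(ss^{-1})=\alpha_k(e)=\id_{\binom{A}{k}}$ and symmetrically on the other side, so $\alpha_k(s^{-1})$ is a two-sided inverse and $\alpha_k(s)\in\perm(\binom{A}{k})$. Thus $\alpha_k\colon\mathbf{G}\to\permg(\binom{A}{k})$ is a well-defined group homomorphism, i.e. a group action. There is no genuine obstacle here; the only point requiring a moment's care is confirming that $\alpha_k(s)$ preserves cardinality so that it lands in $\binom{A}{k}$ rather than in a larger portion of the power set, and everything else is a direct unwinding of the definition combined with the fact that $\alpha$ is itself a homomorphism.
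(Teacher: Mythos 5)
Your verification is correct and is precisely the routine check the paper leaves to the reader (its proof is simply ``This is easily verified''). You fill in the standard details---cardinality preservation under the bijection \(\alpha(s)\), the homomorphism identities, and bijectivity of each \(\alpha_k(s)\) as a consequence---so there is nothing to compare beyond noting that your write-up supplies what the paper omits.
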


\begin{proof}
This is easily verified.
\end{proof}

Certain group actions on a set \(A\) yield families of magmas with universe \(A\). Recall that a group action \(\alpha\colon\mathbf{G}\to\permg(X)\) is called \emph{free} when for any \(s,t\in G\) and any \(x\in X\) we have that \((\alpha(s))(x)=(\alpha(t))(x)\) implies \(s=t\), \emph{transitive} when for any \(x,y\in X\) there exists some \(s\in G\) such that \((\alpha(s))(x)=y\), and \emph{regular} when \(\alpha\) is both free and transitive.

\begin{definition}[\(\alpha\)-action magma]
Fix a group \(\mathbf{G}\), a set \(A\), and some \(n<\lvert A\rvert\). Given a regular group action \(\alpha\colon\mathbf{G}\to\permg(A)\) such that each of the \(k\)-extensions of \(\alpha\) is free for \(1\le k\le n\) let \(\Psi_k\coloneqq\{\,\orb(U)\mid U\in\binom{A}{k}\,\}\) where \(\orb(U)\) is the orbit of \(U\) under \(\alpha_k\). Let \(\beta\coloneqq\{\beta_k\}_{1\le k\le n}\) be a sequence of choice functions \(\beta_k\colon\Psi_k\to \binom{A}{k}\) such that \(\beta_k(\psi)\in\psi\) for each \(\psi\in\Psi_k\). Let \(\gamma\coloneqq\{\gamma_k\}_{1\le k\le n}\) be a sequence of functions \(\gamma_k\colon\Psi_k\to A\) such that \(\gamma_k(\psi)\in\beta_k(\psi)\) for each \(\psi\in\Psi_k\). Let \(g\colon\pow_{\le n}(A)\to A\) be given by \(g(U)\coloneqq(\alpha(s))(\gamma_k(\psi))\) when \(U=(\alpha_k(s))(\beta_k(\psi))\). Define \(f\colon A^n\to A\) by \(f(a_1,\dots,a_n)\coloneqq g(\{a_1,\dots,a_n\})\). The \emph{\(\alpha\)-action magma} induced by \((\beta,\gamma)\) is \(\mathbf{A}\coloneqq(A,f)\).
\end{definition}

The function \(g\), and hence \(f\), is well-defined as we assume that each of the \(\alpha_k\) is free and hence there is a unique \(s\in G\) such that \(U=(\alpha_k(s))(\beta_k(\psi))\) for each \(U\in\binom{A}{k}\).

\begin{theorem}
\label{thm:action_magmas_are_rps}
Let \(\mathbf{A}\) be an \(\alpha\)-action magma induced by \((\beta,\gamma)\). We have that \(\mathbf{A}\in\rps\).
\end{theorem}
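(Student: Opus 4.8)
The plan is to verify the four defining properties of an \(\rps\) magma in turn: essential polyadicity, nondegeneracy, conservativity, and strong fairness. Two of these are essentially immediate. Essential polyadicity holds by construction, since \(f\) is defined by \(f(a_1,\dots,a_n)=g(\{a_1,\dots,a_n\})\) for the map \(g\colon\pow_{\le n}(A)\to A\), so \(g\) itself is the witnessing function. Nondegeneracy is immediate from the standing hypothesis \(n<\lvert A\rvert\).

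For conservativity I would fix a \(k\)-set \(U\in\binom{A}{k}\) with \(k\le n\), write \(\psi\coloneqq\orb(U)\), and write \(U=(\alpha_k(s))(\beta_k(\psi))\) for the unique \(s\in G\) guaranteed by freeness of \(\alpha_k\). Then \(g(U)=(\alpha(s))(\gamma_k(\psi))\), and since \(\gamma_k(\psi)\in\beta_k(\psi)\) we get \((\alpha(s))(\gamma_k(\psi))\in(\alpha_k(s))(\beta_k(\psi))=U\) directly from the definition of the \(k\)-extension. Hence \(f(a_1,\dots,a_n)=g(U)\in U=\{a_1,\dots,a_n\}\) for every input, so \(\mathbf{A}\) is conservative.

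The substance of the argument is strong fairness, and the key step is to establish that \(g\) is equivariant with respect to the two actions: for every \(t\in G\) and every \(U\in\binom{A}{k}\) I claim \(g((\alpha_k(t))(U))=(\alpha(t))(g(U))\). Writing \(U=(\alpha_k(s))(\beta_k(\psi))\) as above, one has \((\alpha_k(t))(U)=(\alpha_k(ts))(\beta_k(\psi))\) since \(\alpha_k\) is an action, and this lies in the same orbit \(\psi\); applying the definition of \(g\) with group element \(ts\) gives \(g((\alpha_k(t))(U))=(\alpha(ts))(\gamma_k(\psi))=(\alpha(t))((\alpha(s))(\gamma_k(\psi)))=(\alpha(t))(g(U))\), proving the claim. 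With equivariance in hand I would fix \(k\le n\) and count, within a single orbit \(\psi\in\Psi_k\), the preimages of a given \(r\in A\) under \(g\). The elements of \(\psi\) are exactly the \(U_s\coloneqq(\alpha_k(s))(\beta_k(\psi))\) for \(s\in G\), pairwise distinct by freeness of \(\alpha_k\), and \(g(U_s)=(\alpha(s))(\gamma_k(\psi))\). Because \(\alpha\) is regular, the map \(s\mapsto(\alpha(s))(\gamma_k(\psi))\) is a bijection of \(G\) onto \(A\), so \(U_s\mapsto g(U_s)\) is a bijection \(\psi\to A\) and each \(r\in A\) is the value \(g(U_s)\) for exactly one \(U_s\in\psi\). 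Summing over the orbits shows \(\lvert g^{-1}(r)\cap\binom{A}{k}\rvert=\lvert\Psi_k\rvert\), independent of \(r\).

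Finally I would pass from \(g\) to \(f\). For \((a_1,\dots,a_n)\in A_k\) the number of tuples in \(A_k\) whose coordinate set equals a prescribed \(k\)-set \(U\) depends only on \(k\) and \(n\) (it is the number of surjections from an \(n\)-element index set onto a \(k\)-element set), and not on \(U\); call it \(N(k,n)\). Then \(\lvert f^{-1}(r)\cap A_k\rvert=N(k,n)\,\lvert g^{-1}(r)\cap\binom{A}{k}\rvert=N(k,n)\,\lvert\Psi_k\rvert\), which is independent of \(r\), so \(\mathbf{A}\) is strongly fair. I expect the equivariance identity for \(g\) to be the crux of the whole argument: once it is established, freeness of the \(\alpha_k\) and regularity of \(\alpha\) combine to make the orbit-by-orbit count of preimages completely uniform, and the remaining bookkeeping converting set-preimages into tuple-preimages is routine.
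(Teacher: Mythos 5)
Your proposal is correct and follows essentially the same route as the paper: conservativity via $\gamma_k(\psi)\in\beta_k(\psi)$ pushed forward by $\alpha(s)$, essential polyadicity and nondegeneracy by construction, and strong fairness by observing that $g$ restricted to each orbit $\psi\in\Psi_k$ is a bijection onto $A$ (the paper phrases this as transitivity plus freeness; your equivariance identity is an explicit repackaging of the same fact) and that the passage from set-preimages to tuple-preimages multiplies by a factor depending only on $k$ and $n$. The only difference is that you spell out the orbit count and the multiplicity $N(k,n)$ in more detail than the paper does.
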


\begin{proof}
We show that \(\mathbf{A}\) is conservative. Let \(a_1,\dots,a_n\in A\) and define \(U\coloneqq\{a_1,\dots,a_n\}\in\binom{A}{k}\). Suppose that \(U\in\psi\in\Psi_k\) with \(U=(\alpha_k(s))(\beta_k(\psi))\). Observe that
	\[
		f(a_1,\dots,a_n)=g(U)=(\alpha(s))(\gamma_k(\psi)).
	\]
By assumption \(\gamma_k(\psi)\in\beta_k(\psi)\) so
	\[
		f(a_1,\dots,a_n)=(\alpha(s))(\gamma_k(\psi))\in(\alpha_k(s))(\beta_k(\psi))=U,
	\]
as desired.

By definition of \(f\) via \(g\colon\pow_{\le n}(A)\to A\) we have that \(\mathbf{A}\) is essentially polyadic.

In order to see that \(\mathbf{A}\) is strongly fair note that \(\lvert\psi\rvert=\lvert G\rvert\) for each \(\psi\in\Psi_k\) as we assume the action of \(\alpha_k\) on \(\binom{A}{k}\) to be free. For each orbit \(\psi\) we have that \(g\) takes on each value in \(A\) exactly once as the action of \(\alpha\) on \(A\) is assumed to be transitive. This shows that each orbit \(\psi\in\Psi_k\) contributes the same number of elements to each of the sets \(f^{-1}(a)\cap A_k\). It follows that \(\mathbf{A}\) is strongly fair.

By definition of an \(\alpha\)-action magma we must have that \(n<\lvert A\rvert\) so \(\mathbf{A}\) is nondegenerate.
\end{proof}

In order to have an \(\alpha\)-action magma for \(\alpha\colon\mathbf{G}\to\permg(A)\) we must have that \(\alpha\) is regular. Recall that every regular \(\mathbf{G}\)-action is isomorphic (in the category of \(\mathbf{G}\)-sets) to the left multiplication action \(L\colon\mathbf{G}\to\permg(G)\). Isomorphic actions determine equivalent orbits so without loss of generality we may only consider the left-multiplication actions of groups on themselves. Fortunately this class of actions is highly compatible with our construction.

\begin{proposition}
\label{thm:free_action_extension}
Let \(\mathbf{G}\) be a nontrivial finite group and let \(L\colon\mathbf{G}\to\permg(G)\) be the left-multiplication action. Whenever \(\gcd(k,\lvert G\rvert)=1\) we have that the \(k\)-extension \(L_k\) of \(L\) is free. In particular, when \(1\le k<\varpi(\lvert G\rvert)\) we have that the \(k\)-extension \(L_k\) of \(L\) is free.
\end{proposition}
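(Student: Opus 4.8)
The plan is to reduce the freeness of \(L_k\) to a statement about stabilizers and then settle it with a short divisibility argument. First I would unwind the definitions: the \(k\)-extension sends each \(s\in G\) to the map \(U\mapsto sU\coloneqq\{\,sa\mid a\in U\,\}\) on \(\binom{G}{k}\). Since \(L_k\) is a group action by the preceding lemma, the equation \((L_k(s))(U)=(L_k(t))(U)\) rearranges to \((t^{-1}s)U=U\), so freeness of \(L_k\) is equivalent to the claim that the only \(g\in G\) fixing some \(U\in\binom{G}{k}\) (in the sense \(gU=U\)) is \(g=e\). It therefore suffices to show that every \(k\)-set has trivial stabilizer under \(L_k\).

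Next I would analyze a \(k\)-set fixed by \(g\). If \(gU=U\) then \(U\) is invariant under the cyclic subgroup \(\langle g\rangle\), so \(U\) is a disjoint union of orbits of the left-multiplication action of \(\langle g\rangle\) on \(G\). The crucial point is that plain left multiplication is free: \(hx=x\) with \(h\in\langle g\rangle\) forces \(h=e\). Hence every such orbit has size exactly \(\abs{\langle g\rangle}\), and because \(U\) is a union of these orbits we obtain \(\abs{\langle g\rangle}\mid\abs{U}=k\). Lagrange's theorem gives \(\abs{\langle g\rangle}\mid\abs{G}\) as well, so \(\abs{\langle g\rangle}\mid\gcd(k,\abs{G})=1\) and thus \(g=e\). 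This proves freeness whenever \(\gcd(k,\abs{G})=1\).

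For the ``in particular'' clause I would verify that \(1\le k<\varpi(\abs{G})\) forces \(\gcd(k,\abs{G})=1\): any common prime divisor \(p\) of \(k\) and \(\abs{G}\) would satisfy \(p\le k<\varpi(\abs{G})\), contradicting \(p\ge\varpi(\abs{G})\), which holds since \(\varpi(\abs{G})\) is the least prime dividing \(\abs{G}\). I expect no serious obstacle; the one step needing care is the assertion that \(gU=U\) makes \(U\) a union of \(\langle g\rangle\)-orbits of common size \(\abs{\langle g\rangle}\), which is exactly where the freeness of ordinary left multiplication is leveraged to pin down the orbit sizes and hence the key divisibility \(\abs{\langle g\rangle}\mid k\).
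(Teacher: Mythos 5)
Your argument is correct and is essentially the paper's own proof: both reduce freeness to showing that any \(s\) with \(sU=U\) is the identity, decompose \(U\) into \(\langle s\rangle\)-orbits of common size \(\lvert s\rvert\) using the freeness of ordinary left multiplication, and conclude \(\lvert s\rvert\mid\gcd(k,\lvert G\rvert)=1\). Your explicit verification of the reduction to stabilizers and of the ``in particular'' clause are small additions the paper leaves implicit.
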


\begin{proof}
In order to show that \(L_k\) is free it suffices to show that given \(s\in G\) and \(U\in\binom{G}{k}\) such that \((L_k(s))(U)=U\) we have that \(s=e\). If \((L_k(s))(U)=U\) then we can write \(U=\coprod_{i=1}^{r}\{s^ju_i\}_j\) as a disjoint union of the \(r\) orbits of elements of \(U\) under \(L(s)\). Each of these orbits has size exactly \(\lvert s\rvert\), which divides \(\lvert\mathbf{G}\rvert\). We find that \(\lvert s\rvert\) divides both \(\lvert U\rvert=k\) and \(\lvert\mathbf{G}\rvert\). When \(\gcd(k,\lvert G\rvert)=1\) this can only occur when \(\lvert s\rvert=1\). Thus, \(s=e\).
\end{proof}

Since we know that our numerical condition \eqref{eq:numerical_condition} for the existence of an \(\rps(m,n)\) magma must hold we have characterized all \(\mathbf{G}\)-actions which give rise to a finite \(\rps\) magma through this construction.

\begin{definition}[Regular \(\rps\) magma]
Let \(\mathbf{G}\) be a nontrivial finite group and fix \(n<\varpi(\lvert G\rvert)\). We denote by \(\mathbf{G}_n(\beta,\gamma)\) the \(L\)-action \(n\)-magma induced by \((\beta,\gamma)\), which we refer to as a \emph{regular \(\rps\) magma}.
\end{definition}

The games Rock-Paper-Scissors and Rock-Paper-Scissors-Spock-Lizard are isomorphic to regular \(\rps\) magmas. To obtain the classic Rock-Paper-Scissors take \(\mathbf{G}=\Z_3\) and \(n=2\). We have \(\binom{A}{1}=\{\{0\},\{1\},\{2\}\}\) and \(\binom{A}{2}=\{\{0,1\},\{0,2\},\{1,2\}\}\). Under the action of \(\Z_3\) we have
	\[
		\Psi_1=\{\{\{0\},\{1\},\{2\}\}\}
	\]
and
	\[
		\Psi_2=\{\{\{0,1\},\{0,2\},\{1,2\}\}\}.
	\]
There is only one orbit \(\psi_{1,1}\in\Psi_1\), for which we choose \(\beta_1(\psi_{1,1})\coloneqq\{0\}\) as a representative. There is also only one orbit \(\psi_{2,1}\in\Psi_2\), for which we choose \(\beta_2(\psi_{2,1})\coloneqq\{0,1\}\) as a representative. We choose \(\gamma_1(\psi_{1,1})\coloneqq0\) and \(\gamma_2(\psi_{2,1})\coloneqq1\). We have that \(\{0\}=0+\{0\}\), \(\{1\}=1+\{0\}\), \(\{2\}=2+\{0\}\), \(\{0,1\}=0+\{0,1\}\), \(\{1,2\}=1+\{0,1\}\), and \(\{0,2\}=2+\{0,1\}\). The resulting values of \(g\) are \(g(\{0\})=0\), \(g(\{1\})=1\), \(g(\{2\})=2\), \(g(\{0,1\})=1\), \(g(\{1,2\})=2\), and \(g(\{0,2\})=0\). The Cayley table for the operation \(f\) obtained from \(g\) is given in \autoref{fig:regular_rps}. Observe that under the identification \(0\mapsto r\), \(1\mapsto p\), and \(2\mapsto s\) the magma \((\Z_3)_2(\beta,\gamma)\) is the original game of RPS.

\begin{figure}[ht]
	\begin{tabular}{r|*{3}{c}}
		& \(0\) & \(1\) & \(2\) \\ \hline
		\(0\) & \(0\) & \(1\) & \(0\) \\
		\(1\) & \(1\) & \(1\) & \(2\) \\
		\(2\) & \(0\) & \(2\) & \(2\)
	\end{tabular}
	\caption{RPS as a regular \(\rps\) magma}
	\label{fig:regular_rps}
\end{figure}

To obtain the game Rock-Paper-Scissors-Spock-Lizard take \(\mathbf{G}=\Z_5\) and \(n=2\). In order to simplify notation we will write sets as strings. For example \(012\) denotes \(\{0,1,2\}\). The sets \(\binom{A}{k}\) are then \(\binom{A}{1}=\{0,1,2,3,4\}\) (where \(a\) represents the singleton set \(\{a\}\)) and \(\binom{A}{2}=\{01,02,03,04,12,13,14,23,24,34\}\). Under the action of \(\Z_5\) we have
	\[
		\Psi_1=\{\{0,1,2,3,4\}\}
	\]
and
	\[
		\Psi_2=\{\{01,12,23,34,40\},\{02,13,24,30,41\}\}.
	\]
There is only one orbit \(\psi_{1,1}\in\Psi_1\), for which we choose \(\beta_1(\psi_{1,1})\coloneqq0\) as a representative. There are two orbits in \(\Psi_2\), \(\psi_{2,1}\coloneqq\{01,12,23,34,40\}\) and \(\psi_{2,2}\coloneqq\{02,13,24,30,41\}\). We choose \(\beta_2(\psi_{2,1})\coloneqq01\) as a representative of \(\psi_{2,1}\) and \(\beta_2(\psi_{2,2})\coloneqq02\) as a representative of \(\psi_{2,2}\). We choose \(\gamma_1(\psi_{1,1})\coloneqq0\), \(\gamma_2(\psi_{2,1})\coloneqq1\), and \(\gamma_2(\psi_{2,2})\coloneqq2\). The Cayley table resulting from these choices is given in \autoref{fig:regular_rpssl}. Observe that under the identification \(0\mapsto r\), \(1\mapsto p\), \(2\mapsto s\), \(3\mapsto v\), and \(4\mapsto l\) the magma \((\Z_5)_2(\beta,\gamma)\) is the game of RPSSL.

\begin{figure}[ht]
	\begin{tabular}{r|*{5}{c}}
		& \(0\) & \(1\) & \(2\) & \(3\) & \(4\) \\ \hline
		\(0\) & \(0\) & \(1\) & \(0\) & \(3\) & \(0\) \\
		\(1\) & \(1\) & \(1\) & \(2\) & \(1\) & \(4\) \\
		\(2\) & \(0\) & \(2\) & \(2\) & \(3\) & \(2\) \\
		\(3\) & \(3\) & \(1\) & \(3\) & \(3\) & \(4\) \\
		\(4\) & \(0\) & \(4\) & \(2\) & \(4\) & \(4\)
	\end{tabular}
	\caption{RPSSL as a regular \(\rps\) magma}
	\label{fig:regular_rpssl}
\end{figure}

We now give an example of one of the games with \(3\) players. We know that such a game must have at least \(m=5\) items to choose from so we take \(\mathbf{G}=\Z_5\). We make the same choices for the orbits in \(\Psi_1\) and \(\Psi_2\) as in RPSSL but we must now examine
	\[
		\Psi_3=\{\{012,123,234,340,401\},\{013,124,230,341,402\}\}.
	\]
Choose \(\beta_3(\psi_{3,1})\coloneqq012\) as a representative of \(\psi_{3,1}\coloneqq\{012,123,234,340,401\}\) and \(\beta_3(\psi_{3,2})\coloneqq013\) as a representative of \(\psi_{3,2}\coloneqq\{013,124,230,341,402\}\). We choose \(\gamma_3(\psi_{3,1})\coloneqq0\) and \(\gamma_3(\psi_{3,2})\coloneqq0\). In \autoref{fig:rps53_first} and \autoref{fig:rps53_second} we give the Cayley table for the resulting ternary operation as five binary tables, one for \((x,y)\mapsto f(0,x,y)\), one for \((x,y)\mapsto f(1,x,y)\), one for \((x,y)\mapsto f(2,x,y)\), one for \((x,y)\mapsto f(3,x,y)\), and one for \((x,y)\mapsto f(4,x,y)\).

\begin{figure}[ht]
	\begin{tabular}{r|*{5}{c}}
		\(0\) & \(0\) & \(1\) & \(2\) & \(3\) & \(4\) \\ \hline
		\(0\) & \(0\) & \(1\) & \(0\) & \(3\) & \(0\) \\
		\(1\) & \(1\) & \(1\) & \(0\) & \(0\) & \(4\) \\
		\(2\) & \(0\) & \(0\) & \(0\) & \(2\) & \(4\) \\
		\(3\) & \(3\) & \(0\) & \(2\) & \(3\) & \(3\) \\
		\(4\) & \(0\) & \(4\) & \(4\) & \(3\) & \(0\)
	\end{tabular}
	\qquad
	\begin{tabular}{r|*{5}{c}}
		\(1\) & \(0\) & \(1\) & \(2\) & \(3\) & \(4\) \\ \hline
		\(0\) & \(1\) & \(1\) & \(0\) & \(0\) & \(4\) \\
		\(1\) & \(1\) & \(1\) & \(2\) & \(1\) & \(4\) \\
		\(2\) & \(0\) & \(2\) & \(2\) & \(1\) & \(1\) \\
		\(3\) & \(0\) & \(1\) & \(1\) & \(1\) & \(3\) \\
		\(4\) & \(4\) & \(4\) & \(1\) & \(3\) & \(4\)
	\end{tabular}
	\qquad
	\begin{tabular}{r|*{5}{c}}
		\(2\) & \(0\) & \(1\) & \(2\) & \(3\) & \(4\) \\ \hline
		\(0\) & \(0\) & \(0\) & \(0\) & \(2\) & \(4\) \\
		\(1\) & \(0\) & \(2\) & \(2\) & \(1\) & \(1\) \\
		\(2\) & \(0\) & \(2\) & \(2\) & \(3\) & \(2\) \\
		\(3\) & \(2\) & \(1\) & \(3\) & \(3\) & \(2\) \\
		\(4\) & \(4\) & \(1\) & \(2\) & \(2\) & \(2\)
	\end{tabular}
	\caption{Cayley tables for \(f(0,x,y)\), \(f(1,x,y)\), and \(f(2,x,y)\)}
	\label{fig:rps53_first}
\end{figure}

\begin{figure}[ht]
	\begin{tabular}{r|*{5}{c}}
		\(3\) & \(0\) & \(1\) & \(2\) & \(3\) & \(4\) \\ \hline
		\(0\) & \(3\) & \(0\) & \(2\) & \(3\) & \(3\) \\
		\(1\) & \(0\) & \(1\) & \(1\) & \(1\) & \(3\) \\
		\(2\) & \(2\) & \(1\) & \(3\) & \(3\) & \(2\) \\
		\(3\) & \(3\) & \(1\) & \(3\) & \(3\) & \(4\) \\
		\(4\) & \(3\) & \(3\) & \(2\) & \(4\) & \(4\)
	\end{tabular}
	\qquad
	\begin{tabular}{r|*{5}{c}}
		\(4\) & \(0\) & \(1\) & \(2\) & \(3\) & \(4\) \\ \hline
		\(0\) & \(0\) & \(4\) & \(4\) & \(3\) & \(0\) \\
		\(1\) & \(4\) & \(4\) & \(1\) & \(3\) & \(4\) \\
		\(2\) & \(4\) & \(1\) & \(2\) & \(2\) & \(2\) \\
		\(3\) & \(3\) & \(3\) & \(2\) & \(4\) & \(4\) \\
		\(4\) & \(0\) & \(4\) & \(2\) & \(4\) & \(4\)
	\end{tabular}
	\caption{Cayley tables for \(f(3,x,y)\) and \(f(4,x,y)\)}
	\label{fig:rps53_second}
\end{figure}

In practice it is easier to actually play by using the map \(g\colon\pow_{\le n}(A)\to A\) which shows that \(\mathbf{A}\) is essentially polyadic. We give those maps for RPS, RPSSL, and our example of an \(\rps(5,3)\) magma in \autoref{fig:rps_g}, \autoref{fig:rpssl_g}, and \autoref{fig:rps53_g}, respectively. The orbits are separated by vertical dividers.

\begin{figure}[ht]
	\begin{tabular}{r|ccc|ccc}
		\(U\) & \(0\) & \(1\) & \(2\) & \(01\) & \(12\) & \(20\) \\ \hline
		\(g(U)\) & \(0\) & \(1\) & \(2\) & \(0\) & \(1\) & \(2\)
	\end{tabular}
	\caption{The function \(g\) for RPS}
	\label{fig:rps_g}
\end{figure}

\begin{figure}[ht]
	\begin{tabular}{r|ccc|ccccc|ccccc}
		\(U\) & \(0\) & \(1\) & \(2\) & \(01\) & \(12\) & \(23\) & \(34\) & \(40\) & \(02\) & \(13\) & \(24\) & \(30\) & \(41\) \\ \hline
		\(g(U)\) & \(0\) & \(1\) & \(2\) & \(1\) & \(2\) & \(3\) & \(4\) & \(0\) & \(0\) & \(1\) & \(2\) & \(3\) & \(4\)
	\end{tabular}
	\caption{The function \(g\) for RPSSL}
	\label{fig:rpssl_g}
\end{figure}

\begin{figure}[ht]
	\centering
	\begin{tabular}{r|ccc|ccccc|ccccc}
		\(U\) & \(0\) & \(1\) & \(2\) & \(01\) & \(12\) & \(23\) & \(34\) & \(40\) & \(02\) & \(13\) & \(24\) & \(30\) & \(41\) \\ \hline
		\(g(U)\) & \(0\) & \(1\) & \(2\) & \(1\) & \(2\) & \(3\) & \(4\) & \(0\) & \(0\) & \(1\) & \(2\) & \(3\) & \(4\)
	\end{tabular}
	\begin{tabular}{r|ccccc|ccccc}
		\(U\) & \(012\) & \(123\) & \(234\) & \(340\) & \(401\) & \(013\) & \(124\) & \(230\) & \(341\) & \(402\) \\ \hline
		\(g(U)\) & \(0\) & \(1\) & \(2\) & \(3\) & \(4\) & \(0\) & \(1\) & \(2\) & \(3\) & \(4\)
	\end{tabular}
	\caption{The function \(g\) for an \(\rps(5,3)\) example}
	\label{fig:rps53_g}
\end{figure}

Not all \(\rps\) magmas can be obtained by the preceding construction. Consider the \(\rps(7,2)\) magma \(\mathbf{A}\) given by the table in \autoref{fig:rps72}. This magma corresponds to the graph Chamberland and Herman call HexagonalPyramid, which they show has automorphism group isomorphic to \(\Z_3\)\cite[p.7]{chamberland}. In \autoref{sec:automorphisms} we show that the automorphism group of \(\mathbf{G}_2(\beta,\gamma)\) must contain a copy of \(\mathbf{G}\). It follows that a regular \(\rps\) magma of order \(7\) cannot have an automorphism group with less than \(7\) elements so \(\mathbf{A}\) cannot be isomorphic to a regular \(\rps\) magma.

\begin{figure}[ht]
	\begin{tabular}{r|*{7}{c}}
		& \(0\) & \(1\) & \(2\) & \(3\) & \(4\) & \(5\) & \(6\) \\ \hline
		\(0\) & \(0\) & \(1\) & \(0\) & \(3\) & \(4\) & \(0\) & \(0\) \\
		\(1\) & \(1\) & \(1\) & \(2\) & \(1\) & \(1\) & \(5\) & \(6\) \\
		\(2\) & \(0\) & \(2\) & \(2\) & \(3\) & \(2\) & \(5\) & \(2\) \\
		\(3\) & \(3\) & \(1\) & \(3\) & \(3\) & \(4\) & \(3\) & \(6\) \\
		\(4\) & \(4\) & \(1\) & \(2\) & \(4\) & \(4\) & \(5\) & \(4\) \\
		\(5\) & \(0\) & \(5\) & \(5\) & \(3\) & \(5\) & \(5\) & \(6\) \\
		\(6\) & \(0\) & \(6\) & \(2\) & \(6\) & \(4\) & \(6\) & \(6\)  
	\end{tabular}
	\caption{An \(\rps(7,2)\) magma}
	\label{fig:rps72}
\end{figure}

\section{Tournaments and RPS magmas}
\label{sec:tournaments}
Now that we have specialized from \(\prps\) magmas to \(\rps\) magmas to regular \(\rps\) magmas and obtained some basic information we give more general context to our discussion. We detail the relationship between \(\rps\) magmas, tournament algebras, and the hypertournaments considered in hypergraph theory.

An \(\rps\) magma can be viewed as arising from a hypergraph with some additional data. Our hypergraphs are allowed to have infinitely many vertices.

\begin{definition}[Pointed hypergraph]
A \emph{pointed hypergraph} \(\mathbf{S}\coloneqq(S,\sigma,g)\) consists of a hypergraph \((S,\sigma)\) and a map \(g\colon\sigma\to S\) such that for each edge \(e\in\sigma\) we have that \(g(e)\in e\). The map \(g\) is called a \emph{pointing} of \((S,\sigma)\).
\end{definition}

We make use of the following family of hypergraphs.

\begin{definition}[\(n\)-complete hypergraph]
Given a set \(S\) we denote by \(\mathbf{S}_n\) the \emph{\(n\)-complete hypergraph} whose vertex set is \(S\) and whose edge set is \(\bigcup_{k=1}^n\binom{S}{k}\).
\end{definition}

Alternatively in the nondegenerate case that \(\lvert S\rvert\ge n\) the \(n\)-complete hypergraph on \(S\) is the \((n-1)\)-skeleton of the simplex with vertex set \(S\) viewed as an abstract simplicial complex, minus the \((-1)\)-cell \(\varnothing\).

\begin{definition}[Hypertournament]
An \emph{\(n\)-hypertournament} is a pointed hypergraph \(\mathbf{T}\coloneqq(T,\tau,g)\) where \((T,\tau)=\mathbf{S}_n\) for some set \(S\).
\end{definition}

We can think of the vertices of an \(n\)-hypertournament as players where for each collection of at most \(n\) players those players participate in a game for which a single winner is decided. Given an \(n\)-hypertournament \(\mathbf{T}\coloneqq(T,\tau,g)\), \(V=\{v_1,\dots,v_k\}\in\binom{T}{k}\) for \(1\le k\le n-1\), and \(u\in T\) such that \(g(\{u,v_1,\dots,v_k\})=u\) we say that \(u\) \emph{dominates} \(V\) and write \(u\to V\) to indicate this. When \(V=\{v\}\) is a singleton we sometimes write \(u\to v\) rather than \(u\to V\).

In the case that \(n=2\) we recover the classical graph-theoretic \emph{tournament}.

Our definition of hypertournament differs from that of Surmacs\cite{surmacs}, for example. The benefit of our definition is that there is a natural algebraic formulation of this concept.

\begin{definition}[Hypertournament magma]
Given an \(n\)-hypertournament \(\mathbf{T}\) the \emph{hypertournament magma} obtained from \(\mathbf{T}\coloneqq(T,\tau,g)\) is the \(n\)-magma \(\mathbf{A}\coloneqq(T,f)\) where for \(u_1,\dots,u_n\in T\) we define
	\[
		f(u_1,\dots,u_n)\coloneqq g(\{u_1,\dots,u_n\}).
	\]
\end{definition}

Note that hypertournament magmas are precisely those magmas which are conservative and essentially polyadic, allowing us to give the following equivalent definition.

\begin{definition}[Hypertournament magma]
A \emph{hypertournament magma} is an \(n\)-magma which is conservative and essentially polyadic.
\end{definition}

In 1965 Hedrl{\'i}n and Chv{\'a}tal introduced the \(n=2\) case of a hypertournament magma, which we refer to as a \emph{tournament magma}. Tournaments, both as graphs and magmas, have been studied extensively\cite{crvenkovic}. Earlier treatments are more likely to characterize tournament magmas as those magmas which are commutative and conservative (and idempotent, although this is redundant).

Let \(\rps_n\) denote the class of \(n\)-ary \(\rps\) magmas, let \(\prps_n\) denote the class of \(n\)-ary \(\prps\) magmas, and let \(\tour_n\) denote the class of all \(n\)-ary hypertournament magmas. We give the containment relationships for these classes.

\begin{proposition}
Let \(n>1\). We have that \(\rps_n\subsetneq\prps_n\), \(\rps_n\subsetneq\tour_n\), and neither of \(\prps_n\) and \(\tour_n\) contains the other. Moreover, \(\rps_n=\prps_n\cap\tour_n\).
\end{proposition}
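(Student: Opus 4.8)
The plan is to split the four assertions into the routine containments, which follow by unwinding the definitions, and the two genuine separations, each of which reduces to exhibiting a single witness magma.

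First I would dispatch the inclusions. Since an \(\rps\) magma is by definition a conservative \(\prps\) magma it is in particular a \(\prps\) magma, giving \(\rps_n\subseteq\prps_n\); and since it is conservative and essentially polyadic it is a hypertournament magma, giving \(\rps_n\subseteq\tour_n\). For the equality \(\rps_n=\prps_n\cap\tour_n\) I would argue by double inclusion: the left-to-right containment is immediate from the two inclusions just noted, while a magma lying in \(\prps_n\cap\tour_n\) is essentially polyadic, strongly fair, and nondegenerate (from membership in \(\prps_n\)) and also conservative (from membership in \(\tour_n\)), hence satisfies all four defining properties of an \(\rps\) magma. This part is purely a matter of comparing definitions.

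The properness of the two containments and the mutual incomparability then collapse onto two examples. A \(\prps_n\) magma that fails to be conservative simultaneously witnesses \(\rps_n\subsetneq\prps_n\) (it is \(\prps\) but not \(\rps\)) and \(\prps_n\not\subseteq\tour_n\) (being non-conservative, it is not a hypertournament magma). Dually, a hypertournament magma that fails to be strongly fair or fails to be nondegenerate witnesses both \(\rps_n\subsetneq\tour_n\) and \(\tour_n\not\subseteq\prps_n\). So it suffices to build, for every \(n>1\), one magma of each kind. For the non-conservative \(\prps\) magma I would reuse the construction in the proof of \autoref{thm:numerical_condition}: choosing a prime \(m>n\) guarantees \(n<\varpi(m)\), so the regular partitions \(C_k\) of \(\binom{A}{k}\) exist and yield an essentially polyadic, strongly fair, nondegenerate magma. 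The only modification needed is on the singletons: since \(\binom{m}{1}/m=1\), the partition \(C_1\) assigns each singleton \(\{a\}\) its own winner, and nothing forces that winner to be \(a\). Indexing the singletons by a fixed-point-free permutation of \(A\) (a nontrivial cyclic shift, say) makes \(g(\{a\})\neq a\), hence \(f(a,\dots,a)\neq a\), so the magma is not conservative; strong fairness on \(A_1\) is preserved because each \(r\in A\) is still the winner of exactly one singleton. For the hypertournament magma outside \(\prps_n\) the cleanest choice is a degenerate one: take \(\lvert A\rvert=n\) and any pointing \(g\colon\pow_{\le n}(A)\to A\) of the \(n\)-complete hypergraph on \(A\). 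The resulting magma is conservative and essentially polyadic, hence a hypertournament magma, but it is degenerate and so lies outside \(\prps_n\).

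The work is essentially definitional, so I expect no single hard step; the only point requiring care is checking that the two witness magmas genuinely exist for every \(n>1\) and retain exactly the advertised properties—in particular that permuting the singleton winners in the first construction breaks conservativity without disturbing strong fairness, and that a valid pair \((m,n)\) is always available, which is guaranteed by the existence of primes exceeding \(n\).
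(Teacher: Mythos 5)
Your proposal is correct and follows essentially the same route as the paper: the inclusions and the identity \(\rps_n=\prps_n\cap\tour_n\) are read off from the definitions, and the strict containments and incomparability are witnessed by a non-conservative \(\prps_n\) magma and a degenerate hypertournament magma. The paper obtains its first witness by post-composing the operation of an \(\rps\) magma with a permutation of the universe and its second from a two-element magma with a dominant element, but these are only cosmetic variations on your witnesses.
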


\begin{proof}
Given an \(\rps\) \(n\)-magma \(\mathbf{A}\coloneqq(A,f)\) and some \(\sigma\in\perm(A)\) we can define a \(\prps\) \(n\)-magma \(\mathbf{A}_\sigma\coloneqq(A,f_\sigma)\) where for \(x_1,\dots,x_n\in A\) we set \(f_\sigma(x_1,\dots,x_n)\coloneqq\sigma(f(x_1,\dots,x_n))\). The resulting magma \(\mathbf{A}_\sigma\) will always belong to \(\prps_n\) but will not in general belong to \(\rps_n\). As by definition we have \(\rps_n\subset\prps_n\) this shows that \(\rps_n\subsetneq\prps_n\).

We also have by definition that \(\rps_n\subset\tour_n\). Consider the magma \(\mathbf{A}\coloneqq(\{a,b\},f)\) where given \(x\in A^n\) we define \(f(x)\coloneqq a\) when at least one component of \(x\) is \(a\) and \(f(x)\coloneqq b\) otherwise. We have that \(\mathbf{A}\) belongs to \(\tour_n\) but does not belong to \(\rps_n\) so \(\rps_n\subsetneq\tour_n\).

We have immediately from the definitions that \(\rps_n=\prps_n\cap\tour_n\).
\end{proof}

Given \(n>1\) we have that \(\prps_n\) and \(\rps_n\) are not varieties. We included a nondegeneracy condition in our definitions of \(\prps\) and \(\rps\) in order to avoid repeatedly disavowing certain trivial situations. Since \(\prps_n\) and \(\rps_n\) do not contain any trivial members we have immediately that \(\prps_n\) and \(\rps_n\) are not varieties. Even if we expand these classes to include trivial algebras they still do not form varieties, as we now show.

The classes \(\prps_n\) and \(\rps_n\) do not support taking subalgebras for \(n>1\).

\begin{proposition}
Let \(n>1\). There exists a magma \(\mathbf{A}\) belonging to \(\rps_n\) and a subalgebra \(\mathbf{B}\le\mathbf{A}\) such that \(\mathbf{B}\) is not a \(\prps\) magma.
\end{proposition}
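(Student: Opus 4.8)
The plan is to exploit the fact that conservativity makes subalgebras abundant, and then to select one that is simply too small to satisfy nondegeneracy. First I would produce a concrete member of \(\rps_n\): by \autoref{thm:numerical_condition} an \(\rps(m,n)\) magma exists whenever \(n<\varpi(m)\), so I would take any prime \(m>n\), for which \(\varpi(m)=m>n\), and let \(\mathbf{A}\coloneqq(A,f)\) be such a magma. (Since \(n>1\) this forces \(m\ge3\), so there is room to choose proper subsets below.)

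Second, I would observe that \emph{every} nonempty subset of \(A\) is a subuniverse. Indeed, since \(f\) is conservative we have \(f(b_1,\dots,b_n)\in\{b_1,\dots,b_n\}\) for all \(b_1,\dots,b_n\in A\), so any nonempty \(B\subseteq A\) is closed under \(f\), and \(\mathbf{B}\coloneqq(B,f|_{B^n})\) is a subalgebra \(\mathbf{B}\le\mathbf{A}\). In particular, conservativity forces idempotence, so even singletons are legitimate subuniverses. Third, I would choose \(B\) to be small, say with \(1\le\lvert B\rvert\le n\) (for instance a single element, which is available since \(\lvert A\rvert=m>n\ge2\)). The resulting subalgebra \(\mathbf{B}\) then has order at most \(n\), so it is degenerate, i.e. \(\lvert B\rvert\le n\), and therefore fails the nondegeneracy clause in the definition of a \(\prps\) magma. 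Hence \(\mathbf{B}\notin\prps\), as required.

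The argument involves essentially no computation, so the only real design decision—what I would flag as the main point rather than an obstacle—is identifying which axiom to break. Conservativity and essential polyadicity are automatically inherited by any restriction, and demonstrating a failure of strong fairness while keeping the subalgebra nondegenerate would require a genuine counting argument on the sets \(f^{-1}(a)\cap A_k\). The cleanest path is to sidestep strong fairness entirely and violate nondegeneracy directly by passing to a subuniverse of order at most \(n\); this works uniformly for every \(n>1\) and makes the proposition nearly immediate.
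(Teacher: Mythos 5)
Your proof is correct, but it takes a different route from the paper's. You break the \emph{nondegeneracy} axiom by passing to a subuniverse of size at most \(n\) (available since conservativity makes every nonempty subset a subuniverse), whereas the paper takes \(\mathbf{A}\coloneqq(\Z_p)_n(\beta,\gamma)\) for a prime \(p>n\) and a subalgebra \(\mathbf{B}\) of order \(p-1\): since \(p-1\) is even and \(n>1\), we have \(n\ge2=\varpi(p-1)\), so \(\mathbf{B}\) violates the numerical condition of \autoref{thm:numerical_condition} and cannot be strongly fair. Your argument is more elementary and, as you say, nearly immediate; but in the context of the surrounding discussion it proves slightly less than the paper intends. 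The proposition is introduced to show that \(\prps_n\) and \(\rps_n\) fail to be closed under subalgebras \emph{even after} the classes are padded with the trivial situations that nondegeneracy was introduced to exclude, and a subalgebra that fails only because it is too small does not speak to that point. The paper's choice exhibits a subalgebra whose failure is structural (strong fairness is numerically impossible at order \(p-1\)) rather than definitional. One small citation note: \autoref{thm:numerical_condition} only constructs \(\prps\) magmas; the existence of conservative ones, which you need for \(\mathbf{A}\in\rps_n\), is established by the regular \(\rps\) magma construction of \autoref{sec:examples}, so you should cite \autoref{thm:action_magmas_are_rps} (with \(\mathbf{G}=\Z_m\)) for that step.
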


\begin{proof}
Since \(\rps\) magmas are conservative we have that any subset of an \(\rps\) magma's universe is a subuniverse. Take \(\mathbf{A}\coloneqq(\Z_p)_n(\beta,\gamma)\) for a prime \(p>n\) and any valid choice of \((\beta,\gamma)\). Take \(\mathbf{B}\) to be any subalgebra of \(\mathbf{A}\) of order \(p-1\). Observe that \(\mathbf{A}\) is a member of \(\prps_n\). However, the order of \(\mathbf{B}\) is even and \(n>1\) so it cannot be that \(n<\varpi(p-1)\). This shows that \(\mathbf{B}\) is not a \(\prps\) magma.
\end{proof}

Note that both the original RPS magma and the magma for the French variant are contained in the magma for RPSSL so in the \(n=2\) case we have seen that an \(\rps\) magma may have some subalgebras which are \(\rps\) magmas and some which are not. Note that \(\{r,s,l\}\) is a subuniverse for RPSSL and the corresponding subalgebra satisfies the numerical condition \eqref{eq:numerical_condition} necessary for \(\prps_2\) magmas (being binary and of order \(3\)), yet the corresponding subalgebra fails to be strongly fair and as such does not belong to \(\prps_2\).

The classes \(\rps_n\) for \(n>1\) fail to be varieties for another reason. They are as far from being closed under products as possible.

\begin{proposition}
Let \(\mathbf{A}\) and \(\mathbf{B}\) be nontrivial, essentially polyadic, conservative \(n\)-magmas. The magma \(\mathbf{A}\times\mathbf{B}\) is not conservative. In particular, the product of two \(\rps\) \(n\)-magmas is not an \(\rps\) \(n\)-magma.
\end{proposition}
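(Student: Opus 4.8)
The plan is to exhibit a single input tuple on which the product operation fails to return one of its arguments. Write $\mathbf{A}=(A,f)$ and $\mathbf{B}=(B,g)$, so that $\mathbf{A}\times\mathbf{B}=(A\times B,h)$ with $h((a_1,b_1),\dots,(a_n,b_n))=(f(a_1,\dots,a_n),g(b_1,\dots,b_n))$. Since both factors are nontrivial I may fix distinct $a_0,a_1\in A$ and distinct $b_0,b_1\in B$. Because $f$ is essentially polyadic and conservative, its value on any tuple whose set of components is exactly $\{a_0,a_1\}$ depends only on that set and lies inside it; call this value $a^\ast$ and write $a^{\ast\ast}$ for the remaining element of $\{a_0,a_1\}$. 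Define $b^\ast$ and $b^{\ast\ast}$ from $g$ and $\{b_0,b_1\}$ in the same way.

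The key observation is that, although each coordinate of $h$ returns one of its own arguments, the resulting pair $(a^\ast,b^\ast)$ need not be among the input pairs. To force this I would build an $n$-tuple over $A\times B$ using only the two pairs $(a^{\ast\ast},b^\ast)$ and $(a^\ast,b^{\ast\ast})$, including each at least once and filling the remaining coordinates (there is room since $n>1$) with repetitions of either. The set of first components of this tuple is $\{a^\ast,a^{\ast\ast}\}=\{a_0,a_1\}$ and the set of second components is $\{b^\ast,b^{\ast\ast}\}=\{b_0,b_1\}$, so essential polyadicity gives $f$-value $a^\ast$ and $g$-value $b^\ast$; hence $h$ of the tuple equals $(a^\ast,b^\ast)$. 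But every input pair has either first coordinate $a^{\ast\ast}\neq a^\ast$ or second coordinate $b^{\ast\ast}\neq b^\ast$, so $(a^\ast,b^\ast)$ is not one of the arguments. Thus $\mathbf{A}\times\mathbf{B}$ is not conservative.

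The second assertion then follows at once: an $\rps$ magma is by definition nondegenerate (hence nontrivial), essentially polyadic, and conservative, so the product of two $\rps$ $n$-magmas meets the hypotheses above and is therefore not conservative; failing conservativity, it cannot be an $\rps$ magma.

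I do not anticipate a serious obstacle. The only point demanding care is the bookkeeping that guarantees the output pair $(a^\ast,b^\ast)$ is genuinely excluded from the inputs, which is why I deliberately discard exactly that pair from the four elements of $\{a_0,a_1\}\times\{b_0,b_1\}$ while keeping enough of the remaining three to realize both two-element component sets. The hypothesis $n>1$ is precisely what supplies at least two coordinates to house the two distinct pairs; for $n=1$ a conservative magma is forced to be the identity and the conclusion would fail, so that assumption is essential.
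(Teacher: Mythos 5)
Your proof is correct and uses essentially the same construction as the paper: both arguments pick two distinct elements in each factor, use conservativity (plus essential polyadicity to normalize which element wins) to identify the winner, and then assemble an $n$-tuple from the two ``mixed'' pairs so that the componentwise output $(a^\ast,b^\ast)$ is excluded from the arguments. Your explicit remark that $n>1$ is needed is a worthwhile clarification, since the paper leaves that hypothesis implicit from the surrounding discussion.
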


\begin{proof}
We show that \(\mathbf{A}\times\mathbf{B}\) cannot be conservative. Let \(\mathbf{A}\coloneqq(A,f)\) and let \(\mathbf{B}\coloneqq(B,g)\). Let \(x_1,x_2\in A\) be distinct and let \(y_1,y_2\in A\) be distinct with \(f(x_1,x_2,x_2,\dots,x_2)=x_1\) and \(g(y_1,y_2,y_2,\dots,y_2)=y_2\). We have that
	\[
		(f\times g)((x_1,y_1),(x_2,y_2),(x_2,y_2),\dots,(x_2,y_2))=(x_1,y_2)
	\]
so \(\mathbf{A}\times\mathbf{B}\) cannot be conservative.
\end{proof}

Given \(n>1\) we have that the class \(\tour_n\) is not a variety. That \(\tour_2\) is not closed under taking products has already been demonstrated\cite[p.98]{crvenkovic}. Given a pair of magmas \(\mathbf{A}\coloneqq(A,f)\) and \(\mathbf{B}\coloneqq(B,g)\) in \(\tour_2\) such that \(\mathbf{A}\times\mathbf{B}\notin\tour_2\) construct magmas \(\mathbf{A}'\coloneqq(A,f')\) and \(\mathbf{B}'\coloneqq(B,g')\) in \(\tour_n\) such that \(f'(x_1,\dots,x_n)=f(u,v)\) when \(\{x_1,\dots,x_n\}=\{u,v\}\) and similarly for \(g'\). Since \(\mathbf{A}\times\mathbf{B}\) is not in \(\tour_2\) it cannot be the case that \(\mathbf{A}'\times\mathbf{B}'\) belongs to \(\tour_n\).

Let \(\mathcal{T}_n\coloneqq\mathbf{V}(\tour_n)\) be the variety generated by all \(n\)-hypertournaments, let \(\mathcal{P}_n\coloneqq\mathbf{V}(\prps_n)\) be the variety generated by all \(n\)-ary \(\prps\) magmas, and let \(\mathcal{R}_n\coloneqq\mathbf{V}(\rps_n)\) be the variety generated by all \(n\)-ary \(\rps\) magmas. We study the relationships between these varieties.

\begin{theorem}
\label{thm:finite_regular_nary_rps_generate}
Let \(n>1\). We have that \(\mathcal{T}_n=\mathcal{R}_n\). Moreover \(\mathcal{T}_n\) is generated by the class of finite regular \(\rps_n\) magmas.
\end{theorem}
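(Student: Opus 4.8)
The plan is to prove the chain $\mathbf{V}(\mathcal{F})=\mathcal{R}_n=\mathcal{T}_n$, where $\mathcal{F}$ denotes the class of finite regular $\rps_n$ magmas. One set of inclusions is immediate: since every finite regular $\rps_n$ magma is an $\rps_n$ magma and every $\rps_n$ magma is a hypertournament magma, we get $\mathbf{V}(\mathcal{F})\subseteq\mathcal{R}_n\subseteq\mathcal{T}_n$. The work is all in the reverse inclusion $\mathcal{T}_n\subseteq\mathbf{V}(\mathcal{F})$. First I would cut $\mathcal{T}_n$ down to its finite members. Because hypertournament magmas are conservative, every subset of such a magma is a subuniverse, so a finitely generated subalgebra is finite and is again conservative and essentially polyadic, i.e. a finite hypertournament magma. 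Hence any identity that fails in some $\mathbf{T}\in\tour_n$ already fails in a finite subalgebra, so $\tour_n$ and its finite members satisfy the same identities; by Birkhoff's theorem $\mathcal{T}_n=\mathbf{V}(\tour_n)=\mathbf{V}(\tour_n^{\mathrm{fin}})$. It therefore suffices to show that every finite hypertournament magma embeds into a finite regular $\rps_n$ magma, for then $\tour_n^{\mathrm{fin}}\subseteq\mathbf{S}(\mathcal{F})\subseteq\mathbf{V}(\mathcal{F})$.

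The embedding lemma is the heart of the argument. Given a finite hypertournament magma $\mathbf{T}\coloneqq(T,f)$ with pointing $g$ and $t\coloneqq\abs{T}$, I would fix a prime $p>n$ large enough that $\Z_p$ contains a Sidon set of size $t$ (a set $S$ with $\abs{(s+S)\cap S}\le1$ for every $s\neq0$, equivalently with all pairwise differences distinct; such sets exist for all sufficiently large $p$), and choose an injection $\iota\colon T\hookrightarrow\Z_p$ whose image is a Sidon set. Write $L\colon\Z_p\to\permg(\Z_p)$ for the left multiplication action. Since $p>n\ge k$ we have $\gcd(k,p)=1$, so by \autoref{thm:free_action_extension} each $k$-extension $L_k$ is free for $1\le k\le n$, and $n<\varpi(p)=p$. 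The Sidon property forces the distinct $k$-subsets of $\iota(T)$ into distinct $L_k$-orbits for every $2\le k\le n$: if $s+U=U'$ for $k$-subsets $U,U'\subseteq\iota(T)$ with $s\neq0$, then $U'\subseteq(s+\iota(T))\cap\iota(T)$, forcing $\abs{(s+\iota(T))\cap\iota(T)}\ge k\ge2$, contradicting the Sidon condition. For $k=1$ there is a single orbit and the only conservative pointing is the identity, which is automatically equivariant and agrees with $f$ on singletons.

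With this placement I would define the data $(\beta,\gamma)$ of a regular $\rps_n$ magma on $\Z_p$: for each $1\le k\le n$ and each orbit $\psi\in\Psi_k$ that meets $\binom{\iota(T)}{k}$ (necessarily in a single set $U_\psi$) take $\beta_k(\psi)\coloneqq U_\psi$ and $\gamma_k(\psi)\coloneqq\iota(f(\iota^{-1}(U_\psi)))$, which lies in $U_\psi$ because $f$ is conservative; for every other orbit make any choice with $\gamma_k(\psi)\in\beta_k(\psi)$. By \autoref{thm:action_magmas_are_rps} the resulting magma $(\Z_p)_n(\beta,\gamma)$ is a finite regular $\rps_n$ magma, and by construction its operation restricted to a $k$-subset $U\subseteq\iota(T)$ returns $\gamma_k(\psi)=\iota(f(\iota^{-1}(U)))$, so $\iota$ is a magma embedding. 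Assembling the inclusions gives $\mathcal{T}_n=\mathbf{V}(\tour_n^{\mathrm{fin}})\subseteq\mathbf{V}(\mathcal{F})\subseteq\mathcal{R}_n\subseteq\mathcal{T}_n$, whence $\mathcal{T}_n=\mathcal{R}_n=\mathbf{V}(\mathcal{F})$. The main obstacle is the embedding lemma, and specifically the requirement that one control all arities $2\le k\le n$ at once; the key realization is that the single Sidon condition $\abs{(s+\iota(T))\cap\iota(T)}\le1$ handles every $k$ simultaneously, reducing the whole construction to the existence of a Sidon set of the right size in some $\Z_p$ with $p>n$.
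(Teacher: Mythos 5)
Your proposal is correct, and while your reduction to finite hypertournament magmas via conservativity is exactly the paper's first step, your embedding lemma takes a genuinely different route. The paper embeds a finite $n$-hypertournament $\mathbf{T}$ into a regular balanced hypertournament built on the direct sum $\bigoplus_{u\in T}\Z_{\alpha_u}$ with $n<\varpi(\alpha_u)$, identifying each vertex with a generator of its own cyclic summand; the independence of these generators is what keeps the prescribed values of the sign function from conflicting, and the ambient magma has order $\kappa(n)^{\abs{T}}$. You instead place $T$ on a Sidon set inside a single $\Z_p$ with $p>n$, and the one condition $\abs{(s+\iota(T))\cap\iota(T)}\le1$ for $s\neq0$ forces distinct $k$-subsets of $\iota(T)$ into distinct translation orbits for every $2\le k\le n$ simultaneously, so the data $(\beta,\gamma)$ can be prescribed on those orbits to copy the pointing of $\mathbf{T}$ and chosen arbitrarily elsewhere; the $k=1$ case is forced by conservativity on both sides. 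Both arguments are sound, but yours buys a substantially better bound: a Sidon set of size $t$ lives in some $\Z_p$ with $p=O(t^2)$, so every finite $n$-hypertournament of order $m$ embeds in a regular balanced hypertournament of order quadratic in $m$ rather than $\kappa(n)^m$ as in \autoref{cor:hypertournament_embedding}. This yields $h_n(k)=O(k^2)$ and thereby settles, with room to spare, the question raised at the end of \autoref{sec:tournaments} of whether $h_n(k)$ admits a subexponential bound. The only step you should make explicit is the existence of a prime $p>n$ whose cyclic group carries a Sidon set of size $\abs{T}$, which is standard (lift a Sidon set from an initial segment of the integers into $\Z_p$ for $p$ large enough to avoid wraparound, or invoke a Singer or Bose difference-set construction).
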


In order to make this argument we refer to the hypergraph-theoretic analogue of regular \(\rps\) magmas.

\begin{definition}[Regular balanced hypertournament]
A \emph{regular balanced hypertournament} is a hypertournament \(\mathbf{T}\coloneqq(T,\tau,g)\) such that there exists a regular \(\rps\) magma \(\mathbf{G}_n(\beta,\gamma)\) where \(G=T\) and \(g(u_1,\dots,u_n)=u_i\) when \(f(u_1,\dots,u_n)=u_i\) in \(\mathbf{G}_n(\beta,\gamma)\).
\end{definition}

We give an alternative characterization of regular \(\rps\) magmas.

\begin{definition}[\((\beta,n)\)-chirality]
Given \(n>1\) and a nontrivial finite group \(\mathbf{G}\) with \(n<\varpi(\lvert G\rvert)\) let \(\Psi_k\) denote the collection of orbits of \(\binom{G}{k}\) under the \(k\)-extension of the left-multiplication action of \(\mathbf{G}\) on itself. Given a sequence of choice functions \(\beta=\{\beta_1,\dots,\beta_n\}\) where \(\beta_k\colon\Psi_k\to\binom{G}{k}\) we say that a sequence of maps \(\gamma=\{\gamma_1,\dots,\gamma_n\}\) such that \(\gamma_k\colon\Psi_k\to G\) with \(\gamma_k(\psi)\in\beta_k(\psi)\) for each \(\psi\in\Psi_k\) is a \emph{\((\beta,n)\)-chirality} of \(\mathbf{G}\). We denote by \(\chir_n(\mathbf{G},\beta)\) the collection of all \((\beta,n)\)-chiralities of \(\mathbf{G}\).
\end{definition}

For each fixed \((\beta,n)\) the members \(\gamma\) of \(\chir_n(\mathbf{G},\beta)\) are precisely the data for each possible regular \(n\)-ary \(\rps\) magma obtained from \(\mathbf{G}\).

\begin{definition}[Obverse \(k\)-set]
Given \(n>1\), a nontrivial finite group \(\mathbf{G}\) with \(n<\varpi(\lvert G\rvert)\), \(1\le k\le n-1\), and \(U,V\in\binom{G\setminus\{e\}}{k}\) we say that \(V\) is an \emph{obverse} of \(U\) when \(U=\{a_1,\dots,a_k\}\) and there exists some \(a_i\in U\) such that \(V=\{a_i^{-1}\}\cup\{\,a_i^{-1}a_j\mid i\neq j\,\}\). We denote by \(\obv(U)\) the set consisting of all obverses \(V\) of \(U\), as well as \(U\) itself.
\end{definition}

Note that any \(U\in\binom{G\setminus\{e\}}{k}\) has exactly \(k\) distinct obverses since given any obverse \(V\) of \(U\) we have that \((V\cup\{e\})=a_i^{-1}(U\cup\{e\})\) and the left-multiplication action of \(\mathbf{G}\) on itself extends to a free action on \(\binom{G}{k+1}\) for \(1\le k\le n-1\). That is, the obverses of \(U\) are the nonidentity elements in the members of \(\orb(U\cup\{e\})\setminus(U\cup\{e\})\) which contain \(e\).

\begin{definition}[\(n\)-sign function]
Given \(n>1\) and a nontrivial group \(\mathbf{G}\) with \(n<\varpi(\lvert G\rvert)\) let \(\sgn_n(\mathbf{G})\) denote the set of all choice functions on
	\[
		\left\{\,\obv(U)\;\middle|\;(\exists k\in\{1,\dots,n-1\})\left(U\in\binom{G\setminus\{e\}}{k}\right)\,\right\}.
	\]
We refer to a member \(\lambda\in\sgn_n(\mathbf{G})\) as an \emph{\(n\)-sign function} on \(\mathbf{G}\).
\end{definition}

We can turn a \((\beta,n)\)-chirality into an \(n\)-sign function. We need to know that given a regular \(\rps\) magma \(\mathbf{G}_n(\beta,\gamma)\) whose basic operation is \(f\) and distinct \(a_1,\dots,a_k\in G\setminus\{e\}\) we have that
	\[
		f(e,\dots,e,a_1^{-1},a_1^{-1}a_2,\dots,a_1^{-1}a_k)=a_1^{-1}f(e,\dots,e,a_1,\dots,a_k).
	\]
This follows from the left-multiplication embedding of \(\mathbf{G}\) in \(\autg(\mathbf{G}_n(\beta,\gamma))\) given in \autoref{prop:left_multiplication_automorphism}.

\begin{definition}[\((\beta,n)\)-signor]
Define the \emph{\((\beta,n)\)-signor} \(\zeta_{\beta,n}\colon\chir_n(\mathbf{G},\beta)\to\sgn_n(\mathbf{G})\) as follows. Given \(\gamma\in\chir_n(\mathbf{G},\beta)\), let \(f_\gamma\) denote the basic operation of \(\mathbf{G}_n(\beta,\gamma)\). Given \(U=\{a_1,\dots,a_k\}\in\binom{G\setminus\{e\}}{k}\) for some \(1\le k\le n-1\) define
	\[
		(\zeta_{\beta,n}(\gamma))(\obv(U))\coloneqq U
	\]
when
	\[
		f_\gamma(e,\dots,e,a_1,\dots,a_k)=e.
	\]
\end{definition}

We choose from among \(U\) and its obverses the subset \(V\) of \(G\setminus\{e\}\) such that \(e\) dominates \(V\). The maps \(\zeta_{\beta,n}(\gamma)\) are well-defined because if we exchange \(\{a_1,\dots,a_k\}\) for one of its obverses, say \(\{a_1^{-1},a_1^{-1}a_2,\dots,a_1^{-1}a_k\}\), and use that left-multiplication by \(a_1^{-1}\) is an automorphism of \(\mathbf{G}_n(\beta,\gamma)\) (or use that \(f(e,\dots,e,a_1^{-1},a_1^{-1}a_2,\dots,a_1^{-1}a_k)=a_1^{-1}f(e,\dots,e,a_1,\dots,a_k)\) with \(f\) the basic operation of \(\mathbf{G}_n(\beta,\gamma)\)) we have that
	\[
		(\zeta_{\beta,n}(\gamma))(\obv(\{a_1,\dots,a_k\}))=\{a_1,\dots,a_k\}
	\]
when
	\[
		f_\gamma(e,\dots,e,a_1,\dots,a_k)=e,
	\]
which is equivalent to
	\[
		f_\gamma(e,\dots,e,a_1^{-1},a_1^{-1}a_2,\dots,a_1^{-1}a_k)=a_1,
	\]
which is equivalent to having that
	\[
		(\zeta_{\beta,n}(\gamma))(\obv(\{a_1^{-1},a_1^{-1}a_2,\dots,a_1^{-1}a_k\}))\neq\{a_1^{-1},a_1^{-1}a_2,\dots,a_1^{-1}a_k\},
	\]
so no other representative of \(\obv(U)\) can be the value of \((\zeta_{\beta_n}(\gamma))(\obv(U))\).

For a particular choice of \(\beta\) the \(n\)-sign functions on a group \(\mathbf{G}\) are equivalent to the \((\beta,n)\)-chiralities.

\begin{lemma}
The map \(\zeta_{\beta,n}\colon\chir_n(\mathbf{G},\beta)\to\sgn_n(\mathbf{G})\) is a bijection.
\end{lemma}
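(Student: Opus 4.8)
The plan is to show that $\zeta_{\beta,n}$ is injective and that $\chir_n(\mathbf{G},\beta)$ and $\sgn_n(\mathbf{G})$ are finite sets of the same cardinality; since a well-defined injection between finite sets of equal size is automatically a bijection, this suffices (well-definedness of $\zeta_{\beta,n}$ is already established above). Throughout I write $w_\gamma(U)\in U$ for the winner that $f_\gamma$ assigns to a subset $U$ with $1\le\lvert U\rvert\le n$, so that $\gamma_k(\psi)=w_\gamma(\beta_k(\psi))$, and I use the $G$-equivariance $w_\gamma(sU)=s\,w_\gamma(U)$ coming from the left-multiplication automorphisms of $\mathbf{G}_n(\beta,\gamma)$ recorded before the definition of the signor.

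For injectivity, the key point is that $\zeta_{\beta,n}(\gamma)$ already pins down $w_\gamma$ on every set through $e$. Fix $W=\{e\}\cup V$ with $V\in\binom{G\setminus\{e\}}{k}$ and $k\le n-1$. As $a$ ranges over the $k+1$ elements of $W$, the sets $(a^{-1}W)\setminus\{e\}$ run over all of $\obv(V)$, and by equivariance $w_\gamma(W)=a$ exactly when $w_\gamma(a^{-1}W)=e$, i.e. when $e$ dominates $(a^{-1}W)\setminus\{e\}$. Thus the winner $w_\gamma(W)$ is the unique $a\in W$ for which $(a^{-1}W)\setminus\{e\}=(\zeta_{\beta,n}(\gamma))(\obv(V))$, so the single value $(\zeta_{\beta,n}(\gamma))(\obv(V))$ determines $w_\gamma(W)$. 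Since any $U\in\binom{G}{k}$ can be translated by $u^{-1}$ (for some $u\in U$) to contain $e$, equivariance extends this to all $k$-sets. In particular every $\gamma_k(\psi)=w_\gamma(\beta_k(\psi))$ is recovered from $\zeta_{\beta,n}(\gamma)$, so $\zeta_{\beta,n}$ is injective.

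For the cardinality count I would use \autoref{thm:free_action_extension}: each $L_k$ is free for $1\le k\le n$, so every orbit in $\Psi_k$ has size $m\coloneqq\lvert G\rvert$ and $\lvert\Psi_k\rvert=\binom{m}{k}/m$. As $\gamma_k(\psi)$ ranges freely over the $k$-element set $\beta_k(\psi)$, this gives $\lvert\chir_n(\mathbf{G},\beta)\rvert=\prod_{k=1}^{n}k^{\binom{m}{k}/m}$. On the other side, freeness of $L_{k+1}$ forces each obverse class $\obv(U)$ of a $k$-set to have exactly $k+1$ elements and these classes to partition $\binom{G\setminus\{e\}}{k}$, yielding $\binom{m-1}{k}/(k+1)$ classes; a choice function selects one of $k+1$ elements from each, so $\lvert\sgn_n(\mathbf{G})\rvert=\prod_{k=1}^{n-1}(k+1)^{\binom{m-1}{k}/(k+1)}$. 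Reindexing by $j=k+1$ and using $\binom{m}{j}/m=\binom{m-1}{j-1}/j$ matches the two products factor by factor (the $k=1$ factor on the chirality side being trivial), so the cardinalities agree and $\zeta_{\beta,n}$ is a bijection.

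The main obstacle will be the orbit/obverse bookkeeping underlying both steps: one must check that the $k+1$ translates of $U\cup\{e\}$ that contain $e$ are distinct and, upon deleting $e$, recover exactly $\obv(U)$, and that the obverse relation partitions $\binom{G\setminus\{e\}}{k}$. Freeness of $L_{k+1}$, valid because $k+1\le n<\varpi(m)$, is what makes this go through, and the index shift—obverse classes of $k$-sets correspond to orbits of $(k+1)$-sets, equivalently to winners of $(k+1)$-sets containing $e$—is the delicate point to state correctly. As an alternative to the cardinality argument one can exhibit the inverse directly: given $\lambda\in\sgn_n(\mathbf{G})$ and an orbit $\psi\in\Psi_k$, locate the unique translate $\{e\}\cup V^{*}=s\,\beta_k(\psi)$ with $V^{*}=\lambda(\obv(V^{*}))$ and set $\gamma_k(\psi)\coloneqq s^{-1}\in\beta_k(\psi)$, then verify $\zeta_{\beta,n}(\gamma)=\lambda$.
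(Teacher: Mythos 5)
Your argument is correct, and it diverges from the paper's in the second half. The injectivity step is essentially the paper's: both proofs observe that \(\zeta_{\beta,n}(\gamma)\) records, for each orbit, which translate-to-contain-\(e\) is dominated by \(e\), and that this pins down the basic operation \(f_\gamma\) and hence \(\gamma\); your version just makes the equivariance bookkeeping (\(w_\gamma(W)=a\) iff \(w_\gamma(a^{-1}W)=e\), and the \(k+1\) translates of \(W\) through \(e\) enumerate \(\obv(V)\)) more explicit. Where you differ is surjectivity: the paper constructs an explicit preimage \(\gamma\) of a given \(\lambda\) (set \(\gamma_k(\psi)\coloneqq a_1\) when \(\lambda(\obv(\{a_1^{-1}a_i\mid i\neq1\}))=\{a_1^{-1}a_i\mid i\neq1\}\)) and verifies \(\zeta(\gamma)=\lambda\), whereas you count: \(\lvert\chir_n(\mathbf{G},\beta)\rvert=\prod_{k=1}^{n}k^{\binom{m}{k}/m}\) and \(\lvert\sgn_n(\mathbf{G})\rvert=\prod_{k=1}^{n-1}(k+1)^{\binom{m-1}{k}/(k+1)}\), which agree via \(\binom{m}{j}/m=\binom{m-1}{j-1}/j\). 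Your count is correct, and the two facts it rests on --- that each \(\obv(U)\) has exactly \(k+1\) elements and that the obverse classes partition \(\binom{G\setminus\{e\}}{k}\) --- are exactly the content of the paper's remark identifying \(\obv(U)\) with the \(e\)-containing members of \(\orb(U\cup\{e\})\) under the free action \(L_{k+1}\), so they are available to you. The trade-off: the counting route is shorter and yields the cardinality of \(\sgn_n(\mathbf{G})\) as a byproduct (consistent with the later count of \(\lvert\rps(\mathbf{G},n)\rvert\)), but it is intrinsically finite and non-constructive; the paper's explicit inverse is what one actually wants in hand when, as in the proof that \(\mathcal{T}_n=\mathcal{R}_n\), one writes down a sign function \(\lambda\) and needs to work with the resulting magma \(\mathbf{G}_n(\lambda)\). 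Your closing remark already identifies this explicit inverse as the alternative, and that alternative is precisely the paper's proof.
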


\begin{proof}
Let \(\zeta\coloneqq\zeta_{\beta,n}\). Suppose that \(\gamma,\gamma'\in\chir_n(\mathbf{G},\beta)\) with \(\zeta(\gamma)=\zeta(\gamma')\). A regular \(\rps\) magma operation is determined by choosing for each \(1\le k\le n\) and each orbit of \(\binom{G}{k}\) the \(k\)-set \(U\) for which \(g(U)=e\). Having that \(\zeta(\gamma)=\zeta(\gamma')\) says that these choices are the same for both \(\mathbf{G}_n(\beta,\gamma)\) and \(\mathbf{G}_n(\beta,\gamma')\). Taking \(f_\gamma\) and \(f_{\gamma'}\) to be the basic operations of \(\mathbf{G}_n(\beta,\gamma)\) and \(\mathbf{G}_n(\beta,\gamma')\), respectively, it follows that \(f_\gamma=f_{\gamma'}\). If \(\gamma\) and \(\gamma'\) were to differ then there would be some orbit \(\psi\in\Psi_k\) for some \(1\le k\le n\) for which \(\gamma_k(\psi)\neq\gamma'_k(\psi)\). This would imply that \(f_\gamma\neq f_{\gamma'}\), so it must be that \(\zeta(\gamma)=\zeta(\gamma')\) implies that \(\gamma=\gamma'\). That is, \(\zeta\) is injective.

The map \(\zeta\) is also surjective. Given an \(n\)-sign function \(\lambda\in\sgn(\mathbf{G})\), \(\psi\in\Psi_k\), and \(\beta_k(\psi)=\{a_1,\dots,a_k\}\) set \(U\coloneqq\{\,a_1^{-1}a_i\mid i\neq1\,\}\) and define \(\gamma_k\colon\Psi_k\to G\) by \(\gamma_k(\psi)\coloneqq a_1\) when \(\lambda(\obv(U))=U\). We claim that for this choice of \(\gamma_k\) for each \(k\) we have \(\zeta(\gamma)=\lambda\). To see this, observe that given \(V=\{b_1,\dots,b_{k-1}\}\in\binom{G\setminus\{e\}}{k-1}\) for \(1\le k\le n\) with \(\psi=\orb(\{e,b_1,\dots,b_{k-1}\})\) and \(\beta_k(\psi)=\{a_1,\dots,a_k\}\) we have that \(\{e,b_1,\dots,b_{k-1}\}=c\{a_1,\dots,a_k\}\) for some \(c\in G\). Suppose that \(e=ca_1\) and \(b_i=ca_{i+1}\) for \(1\le i\le k-1\). We have that \(c=a_1^{-1}\) and \(b_i=a_1^{-1}a_{i+1}\). Taking \(U=\{\,a_1^{-1}a_i\mid i\neq1\,\}\) it follows that \((\zeta(\gamma))(\obv(U))=U\) if and only if \(f_\gamma(e,\dots,e,a_1^{-1}a_2,\dots,a_1^{-1}a_k)=e\), which occurs if and only if \(f_\gamma(e,\dots,e,a_1,\dots,a_k)=a_1\). This is equivalent to having that \(\lambda(\obv(U))=U\) by definition of \(\gamma\). We find that \(\zeta(\gamma)=\lambda\), so \(\zeta\) is indeed surjective and hence a bijection.
\end{proof}

This lemma says that for \(n\)-ary regular \(\rps\) magmas we can always work with choice functions from \(\sgn_n(\mathbf{G})\) rather than \(\chir_n(\mathbf{G},\beta)\). For any \(\beta\) and \(\beta'\) we have that \(\mathbf{G}_n(\beta,\zeta_{\beta,n}^{-1}(\lambda))=\mathbf{G}_n(\beta',\zeta_{\beta',n}^{-1}(\lambda))\) so \(\mathbf{G}_n(\lambda)\) is well-defined without reference to a particular choice of orbit representatives \(\beta\).

We give the proof of \autoref{thm:finite_regular_nary_rps_generate}.

\begin{proof}
Since \(n\)-hypertournament magmas are conservative it is the case that \(\tour_n\models\epsilon\) for some identity \(\epsilon\) in \(m\) variables exactly when each of the \(n\)-hypertournament magmas of order \(m\) models \(\epsilon\). Since
	\[
		\mathcal{T}_n=\mathbf{V}(\tour_n)=\operatorname{Mod}(\operatorname{Id}(\tour_n))
	\]
we have that \(\mathcal{T}_n\) consists of all models of those identities which hold in the finite \(n\)-hypertournament magmas. We find that \(\mathcal{T}_n\) is generated by the finite members of \(\tour_n\). This is the same argument as the classical one for the binary case\cite[p.99]{crvenkovic}.

We show that every finite \(n\)-hypertournament embeds into a finite regular balanced hypertournament. Since we know that the finite hypertournaments generate \(\mathcal{T}_n\) this will establish that the finite regular balanced hypertournaments alone generate \(\mathcal{T}_n\).

Let \(\mathbf{T}\coloneqq(T,\tau,g)\) be a finite hypertournament. Consider the group \(\mathbf{G}\coloneqq\bigoplus_{u\in T}\Z_{\alpha_u}\) where for each \(u\) we have that \(\alpha_u\) is a natural other than \(1\) and \(n<\varpi(\alpha_u)\). Identify \(u\in T\) with a generator of \(\Z_{\alpha_u}\). Define an \(n\)-sign function \(\lambda\in\sgn_n(\mathbf{G})\) as follows. Given distinct vertices \(u_1,\dots,u_k\in T\) for some \(1\le k\le n\) where \(g(\{u_1,\dots,u_k\})=u_1\) we define
	\[
		\lambda(\obv(\{\,u_i-u_1\mid i\neq1\,\}))\coloneqq\{\,u_i-u_1\mid i\neq1\,\}.
	\]
The function \(\lambda\) may take on any values for other arguments. By the previous lemma \(\lambda\) yields a regular \(\rps\) magma \(\mathbf{G}_n(\lambda)\). It follows that \(\mathbf{T}\) embeds into the corresponding regular balanced hypertournament.

Since the variety \(\mathcal{T}_n\) is generated by the finite regular members of \(\rps_n\) we have that \(\mathcal{T}_n\le\mathcal{R}_n\). Conversely we have that \(\rps_n\subset\tour_n\) so \(\mathcal{R}_n\le\mathcal{T}_n\). We find that \(\mathcal{T}_n=\mathcal{R}_n\).
\end{proof}

We have a few corollaries arising from taking specific values of \(\alpha_u\) in the preceding proof.

\begin{corollary}
The finite regular \(\rps\) magmas of the form \((\Z_{\kappa(n)}^m)(\lambda)\) where \(\kappa(n)\) is the least prime strictly greater than \(n\) generate \(\mathcal{T}_n\).
\end{corollary}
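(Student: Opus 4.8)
The plan is to specialize the embedding construction carried out in the proof of \autoref{thm:finite_regular_nary_rps_generate}. Recall that there, given an arbitrary finite $n$-hypertournament $\mathbf{T}\coloneqq(T,\tau,g)$, we embedded it into a finite regular balanced hypertournament by forming the group $\mathbf{G}\coloneqq\bigoplus_{u\in T}\Z_{\alpha_u}$, where each $\alpha_u$ was required only to be a natural number other than $1$ satisfying $n<\varpi(\alpha_u)$. The freedom in choosing the $\alpha_u$ is precisely what this corollary exploits: I would make the uniform choice $\alpha_u\coloneqq\kappa(n)$ for every $u\in T$.

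First I would verify that this uniform choice meets the hypotheses of the earlier construction. Since $\kappa(n)$ is by definition the least prime strictly greater than $n$, it is prime, so $\kappa(n)\neq1$ and its least prime divisor is $\varpi(\kappa(n))=\kappa(n)>n$. Hence $\alpha_u\coloneqq\kappa(n)$ is a permissible value for every $u$, and the construction of the $n$-sign function $\lambda$ and of the regular $\rps$ magma $\mathbf{G}_n(\lambda)$ proceeds verbatim. I would also note that the whole-group condition $n<\varpi(\lvert G\rvert)$ holds, since every prime dividing $\lvert G\rvert=\kappa(n)^{\lvert T\rvert}$ equals $\kappa(n)>n$.

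Next I would identify the resulting group. With every summand equal to $\Z_{\kappa(n)}$ we obtain $\mathbf{G}=\bigoplus_{u\in T}\Z_{\kappa(n)}=\Z_{\kappa(n)}^{m}$ where $m\coloneqq\lvert T\rvert$. Thus the regular balanced hypertournament produced by the argument has underlying magma of precisely the form $(\Z_{\kappa(n)}^m)(\lambda)$. Since the embedding step of \autoref{thm:finite_regular_nary_rps_generate} shows that every finite $n$-hypertournament embeds into such a magma, and since the finite $n$-hypertournament magmas generate $\mathcal{T}_n$, it follows that the magmas $(\Z_{\kappa(n)}^m)(\lambda)$ alone generate $\mathcal{T}_n$.

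Because the argument is a direct specialization, I do not anticipate a genuine obstacle; the only point requiring care is the number-theoretic check that $\varpi(\kappa(n))>n$, which holds automatically from the primality and minimality of $\kappa(n)$. The remainder is simply the observation that the embedding in the proof of \autoref{thm:finite_regular_nary_rps_generate} used no property of the $\alpha_u$ beyond those two constraints, so nothing is lost by taking them all equal to $\kappa(n)$.
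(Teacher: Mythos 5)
Your proposal is correct and is essentially identical to the paper's proof, which simply takes each \(\alpha_u\coloneqq\kappa(n)\) in the embedding argument of \autoref{thm:finite_regular_nary_rps_generate}; your additional checks that \(\kappa(n)\) satisfies the constraints \(\alpha_u\neq1\) and \(n<\varpi(\alpha_u)\) are the right (if routine) verifications.
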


\begin{proof}
Take each \(\alpha_u\coloneqq\kappa(n)\) in the previous argument.
\end{proof}

\begin{corollary}
Let \(p_k\) indicate the \(k^{\text{th}}\) prime and suppose that \(\kappa(n)=p_\ell\). Define \(\alpha(m)\coloneqq\prod_{k=\ell}^{m+\ell-1}p_k\). The finite regular \(\rps\) magmas of the form \((\Z_{\alpha(m)})_n(\lambda)\) generate \(\mathcal{T}_n\).
\end{corollary}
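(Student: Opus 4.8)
The plan is to reuse the embedding construction from the proof of \autoref{thm:finite_regular_nary_rps_generate} essentially verbatim, making only a different choice of the moduli \(\alpha_u\) than in the first corollary. There all the \(\alpha_u\) were set equal to \(\kappa(n)\), producing the elementary abelian group \(\Z_{\kappa(n)}^m\); here I would instead make the \(\alpha_u\) pairwise distinct primes, so that the Chinese Remainder Theorem collapses the resulting direct sum into a single cyclic group \(\Z_{\alpha(m)}\).

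First I would fix a finite \(n\)-hypertournament \(\mathbf{T}\coloneqq(T,\tau,g)\) and set \(m\coloneqq\lvert T\rvert\). Enumerating the vertices of \(T\), I would assign to the \(i\)-th vertex \(u\) the modulus \(\alpha_u\coloneqq p_{\ell+i-1}\), so that the \(m\) moduli are exactly the consecutive primes \(p_\ell,\dots,p_{m+\ell-1}\). Since \(\kappa(n)=p_\ell\) is the least prime exceeding \(n\), each \(\alpha_u=p_k\) with \(k\ge\ell\) is a prime satisfying \(\varpi(\alpha_u)=p_k\ge p_\ell=\kappa(n)>n\), and in particular \(\alpha_u\neq1\). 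Thus this is a legitimate choice of moduli for the construction used in \autoref{thm:finite_regular_nary_rps_generate}.

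Next I would invoke the Chinese Remainder Theorem: because the primes \(p_\ell,\dots,p_{m+\ell-1}\) are pairwise distinct, we have
	\[
		\mathbf{G}\coloneqq\bigoplus_{u\in T}\Z_{\alpha_u}=\bigoplus_{k=\ell}^{m+\ell-1}\Z_{p_k}\cong\Z_{\prod_{k=\ell}^{m+\ell-1}p_k}=\Z_{\alpha(m)}.
	\]
In particular \(\varpi(\alpha(m))=p_\ell=\kappa(n)>n\), so \(\mathbf{G}\) supports the regular \(\rps\) construction for this \(n\). The regular \(\rps\) magma \(\mathbf{G}_n(\lambda)\) produced by the \(n\)-sign function \(\lambda\) of that proof is therefore, up to isomorphism, a magma of the form \((\Z_{\alpha(m)})_n(\lambda)\), and by the cited proof \(\mathbf{T}\) embeds into the regular balanced hypertournament associated to it. The conclusion then follows exactly as before: every finite \(n\)-hypertournament embeds into a magma of this form, and since \autoref{thm:finite_regular_nary_rps_generate} establishes that the finite members of \(\tour_n\) generate \(\mathcal{T}_n\), the stated family generates \(\mathcal{T}_n\).

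I do not anticipate a genuine obstacle. The only points requiring care are verifying that \(\varpi(\alpha(m))=\kappa(n)>n\), which is immediate from the definition of \(\kappa\), and confirming that the embedding argument of \autoref{thm:finite_regular_nary_rps_generate} is insensitive to the cyclic factors having distinct rather than equal orders. The latter holds because that argument uses only that each vertex is identified with an independent generator of a direct summand whose order exceeds \(n\), and never that the summand orders coincide; the distinctness of the primes is exploited solely through the CRT identification with \(\Z_{\alpha(m)}\).
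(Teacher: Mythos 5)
Your proposal is correct and is essentially the paper's own argument: the paper likewise orders the vertices as \(\{u_i\}_{i=1}^m\), sets \(\alpha_{u_i}\coloneqq p_{i+\ell-1}\), and identifies \(\bigoplus_{i=1}^m\Z_{p_{i+\ell-1}}\cong\Z_{\alpha(m)}\) via the Chinese Remainder Theorem before appealing to the embedding construction of \autoref{thm:finite_regular_nary_rps_generate}. Your additional checks that each \(\varpi(\alpha_u)>n\) and that the construction is insensitive to the summand orders being distinct are sound and merely make explicit what the paper leaves implicit.
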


\begin{proof}
Order the \(u\in T\) as \(\{u_i\}_{i=1}^m\) and let \(\alpha_{u_i}\coloneqq p_{i+\ell-1}\). We have that \(\mathbf{G}=\bigoplus_{i=1}^m\Z_{p_{i+\ell-1}}\cong\Z_{\alpha(m)}\).
\end{proof}

Our last corollary is of a more combinatorial nature.

\begin{corollary}
\label{cor:hypertournament_embedding}
Every finite \(n\)-hypertournament of order \(m\) is contained in a finite regular balanced hypertournament of order \(\kappa(n)^m\) .
\end{corollary}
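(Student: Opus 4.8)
The plan is to specialize the construction from the proof of \autoref{thm:finite_regular_nary_rps_generate}, exactly as in the first corollary above but now read off in hypergraph-theoretic language rather than as a statement about generating the variety. Given a finite \(n\)-hypertournament \(\mathbf{T}\coloneqq(T,\tau,g)\) of order \(m\), I would take each \(\alpha_u\coloneqq\kappa(n)\) in that argument, so that the group becomes \(\mathbf{G}=\bigoplus_{u\in T}\Z_{\kappa(n)}\cong\Z_{\kappa(n)}^m\).

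First I would verify that this choice is admissible for the construction. Since \(\kappa(n)\) is the least prime strictly greater than \(n\), it is prime with \(\kappa(n)>n\), so \(\varpi(\kappa(n))=\kappa(n)>n\); hence each summand \(\Z_{\kappa(n)}\) meets the requirement \(n<\varpi(\alpha_u)\) imposed in the proof. Moreover \(\varpi(\lvert G\rvert)=\varpi(\kappa(n)^m)=\kappa(n)>n\), so the regular \(\rps\) magma \(\mathbf{G}_n(\lambda)\) is defined for the \(n\)-sign function \(\lambda\) produced there.

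The proof of \autoref{thm:finite_regular_nary_rps_generate} then supplies an embedding of \(\mathbf{T}\) into the regular balanced hypertournament associated to \(\mathbf{G}_n(\lambda)\). I would finish by observing that the vertex set of that hypertournament is \(G\), which has cardinality \(\lvert G\rvert=\kappa(n)^m\). Thus \(\mathbf{T}\) is contained in a finite regular balanced hypertournament of order \(\kappa(n)^m\), as claimed.

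Because every step is a direct specialization of an already-established result, there is no substantive obstacle. The only points requiring care are to confirm that the admissibility hypotheses on \(\varpi\) survive the specialization (handled above) and that the \emph{order} of the resulting regular balanced hypertournament is genuinely the vertex count \(\lvert G\rvert=\kappa(n)^m\), and not some tally of edges.
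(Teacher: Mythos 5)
Your proposal is correct and matches the paper's own proof, which likewise just takes each \(\alpha_u\coloneqq\kappa(n)\) in the argument for \autoref{thm:finite_regular_nary_rps_generate}; your additional checks on \(\varpi\) and on the vertex count \(\lvert G\rvert=\kappa(n)^m\) are sound elaborations of the same specialization.
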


\begin{proof}
Again, take each \(\alpha_u\coloneqq\kappa(n)\) in the previous argument.
\end{proof}

We describe a setting in which this corollary is much weaker than an analogous result we have in the binary case.

\begin{definition}[Balanced hypertournament]
\label{balanced_hypertournament}
A \emph{balanced hypertournament} is a hypertournament \(\mathbf{T}\coloneqq(T,\tau,g)\) such that for all \(u,v\in T\) and all \(k\in\N\) we have
	\[
		\lvert\{\,W\in\tau\mid\lvert W\rvert=k\text{ and }g(W)=u\,\}\rvert=\lvert\{\,W\in\tau\mid\lvert W\rvert=k\text{ and }g(W)=v\,\}\rvert.
	\]
\end{definition}

This is to say that all vertices in a balanced hypertournament dominate the same number of edges, where ``number'' is meant in the sense of cardinality for infinite collections of vertices \(T\). Just as hypertournament magmas are in natural correspondence with hypertournaments we have that \(\rps\) magmas are in natural correspondence with balanced hypertournaments.

In the \(n=2\) case we have a different embedding result. Our hypergraph-theoretic notation is extraneous here so we revert to the graph-theoretic formulation where a tournament on \(r\) vertices is a directed graph \(\mathbf{T}\coloneqq(T,\tau)\) whose underlying graph is the complete simple graph on \(r\) vertices and a balanced tournament is a tournament in which each vertex dominates the same number of other vertices.

\begin{lemma}
Every finite tournament is a subgraph of a finite balanced tournament.
\end{lemma}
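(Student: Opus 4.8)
The plan is to realize the desired balanced tournament on exactly $r \coloneqq 2m-1$ vertices, the smallest count compatible with balance. Write $V$ for the vertex set of $\mathbf{T}$, with $\lvert V\rvert = m$, and let $d_v$ denote the number of vertices dominated by $v$ in $\mathbf{T}$, so $0\le d_v\le m-1$ and $\sum_v d_v = \binom{m}{2}$. A balanced tournament on $r$ vertices must have $r$ odd, since $r\mid\binom{r}{2}$, and then every vertex dominates $(r-1)/2$ others. I would therefore adjoin a set $W$ of $c\coloneqq r-m = m-1$ new vertices and extend the domination relation so that every vertex dominates exactly $m-1 = (r-1)/2$ others, while leaving the orientation inside $V$ untouched (so that $\mathbf{T}$ sits inside as the induced sub-tournament on $V$). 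Each old vertex $v$ must then dominate exactly $s_v\coloneqq (m-1)-d_v\in[0,m-1]$ of the new vertices, and one checks $\sum_v s_v = m(m-1)-\binom{m}{2} = \binom{m}{2}$.

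Next I would prescribe, for each new vertex $w$, the number $q_w$ of \emph{old} vertices that dominate it, choosing the $q_w$ as equal as possible subject to $\sum_w q_w = \binom{m}{2}$: all equal to $m/2$ when $m$ is even, and split evenly between $(m-1)/2$ and $(m+1)/2$ (each value occurring on $(m-1)/2$ of the $c=m-1$ new vertices) when $m$ is odd. This forces $w$ to dominate $t_w\coloneqq m-q_w$ old vertices, and to reach total domination $m-1$ it must dominate $e_w\coloneqq (m-1)-t_w = q_w-1$ of the other new vertices. The bookkeeping is consistent, since $\sum_w e_w = \binom{m}{2}-(m-1) = \binom{m-1}{2} = \binom{c}{2}$, the number of games played inside $W$. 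It then remains to (i) realize a bipartite domination between $V$ and $W$ with prescribed ``row sums'' $s_v$ and ``column sums'' $q_w$, and (ii) orient the games inside $W$ so that $w$ dominates $e_w$ of them. For (ii) the sequence $(e_w)$ is the constant $(c-1)/2$ when $m$ is even and is near-constant, taking the values $c/2-1$ and $c/2$ each $c/2$ times, when $m$ is odd; in either case it is the score sequence of a (near-)regular tournament, which exists by a circulant construction or by Landau's theorem.

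The main obstacle is step (i), which I would settle via the Gale--Ryser condition: ordering $V$ so that $s$ is nonincreasing, a bipartite realization with these row and column sums exists iff $\sum_{i=1}^{k} s_{(i)} \le \sum_{w}\min(q_w,k)$ for every $1\le k\le m$. The crucial observation is that the $k$ largest values of $s$ correspond to the $k$ smallest values of $d$, and the $k$ vertices of smallest domination in $\mathbf{T}$ play $\binom{k}{2}$ games among themselves, so $\sum_{i=1}^{k} d_{[i]}\ge\binom{k}{2}$; hence the left-hand side is at most $\phi(k)\coloneqq k(m-1)-\binom{k}{2}$. Because the $q_w$ are balanced, the right-hand side equals $(m-1)k$ when $k\le\lfloor m/2\rfloor$ and equals $\binom{m}{2}$ when $k\ge\lceil m/2\rceil$. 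In the first range $\phi(k)\le (m-1)k$ is immediate, and in the second range, since $\phi$ increases on $\{1,\dots,m-1\}$ with $\phi(m-1)=\phi(m)=\binom{m}{2}$, we get $\phi(k)\le\binom{m}{2}$. Thus the Gale--Ryser inequalities hold in every case, the bipartite orientation exists, and assembling it with the given tournament on $V$ and a (near-)regular tournament on $W$ yields a balanced tournament of order $2m-1$ containing $\mathbf{T}$. I expect the only delicate points to be the parity split in the choice of the $q_w$ and the verification of the concavity bound on $\phi$; the rest is routine degree counting.
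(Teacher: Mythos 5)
Your proof is correct, but it takes a genuinely different route from the paper's. The paper uses an explicit ``doubling plus apex'' construction: two copies of \(\mathbf{T}\) with the orientation preserved inside each copy and reversed between them, together with one new vertex \(\eta\) beating all of the first copy and losing to all of the second; a two-line degree count then shows every vertex has out-degree \(r\), giving a balanced tournament of order \(2r+1\). You instead aim for order \(2m-1\), which forces every vertex to have out-degree exactly \(m-1\), and you realize the required complements of the out-degrees by (i) a bipartite orientation between \(V\) and the \(m-1\) new vertices with row sums \(s_v=(m-1)-d_v\) and near-constant column sums, justified by Gale--Ryser together with the Landau-type bound \(\sum_{i=1}^{k}d_{[i]}\ge\binom{k}{2}\), and (ii) a (near-)regular tournament on the new vertices. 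I checked the bookkeeping: the sums \(\sum s_v=\sum q_w=\binom{m}{2}\) and \(\sum e_w=\binom{m-1}{2}\) are right, the Gale--Ryser inequalities do reduce to \(\phi(k)=k(m-1)-\binom{k}{2}\le\min\bigl((m-1)k,\binom{m}{2}\bigr)\) as you claim, and the near-regular score sequence on \(W\) is realizable (e.g.\ by deleting a vertex from a regular tournament on \(m\) vertices). What the paper's approach buys is complete self-containment and brevity; what yours buys is the optimal order, since a tournament containing a vertex of out-degree \(m-1\) cannot sit inside a balanced tournament of order less than \(2m-1\). This would sharpen the paper's subsequent remark from \(h_2(k)\le2k+1\) to \(h_2(k)=2k-1\), at the cost of invoking Gale--Ryser and Landau.
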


\begin{proof}
Let \(\mathbf{T}\coloneqq(T,\tau)\) be a finite tournament with \(r\) vertices. Define
	\[
		\Phi\coloneqq\{\,u_1\mid u\in T\,\}\cup\{\,u_2\mid u\in T\,\}\cup\{\eta\}
	\]
where \(\eta\) is a new vertex distinct from the \(u_i\). Define a relation \(\phi\subset\Phi^2\) as follows. Given \(u_i\in\Phi\) we set \((u_i,v_i)\in\phi\) when \((u,v)\in\tau\). We set \((u_i,v_j)\in\phi\) for \(u\neq v\) and \(i\neq j\) when \((v,u)\in\tau\). We set \((u_1,u_2)\in\phi\), \((u_2,\eta)\in\phi\), and \((\eta,u_1)\in\phi\). We claim that \(\mathbf{\Phi}\coloneqq(\Phi,\phi)\) is a balanced tournament.

It is immediate that \(\mathbf{\Phi}\) is a tournament. To see that \(\mathbf{\Phi}\) is balanced observe that \(\mathbf{\Phi}\) consists of two copies of \(\mathbf{T}\) along with the new vertex \(\eta\). The vertex \(u_i\) dominates some \(k\) vertices in the \(i\) copy of \(\mathbf{T}\) and dominates the \(n-k-1\) vertices in the \(j\neq i\) copy of \(\mathbf{T}\) corresponding to the \(n-k-1\) vertices in \(\mathbf{T}\) which dominate \(u\), giving \(n-1\) vertices dominated by \(u_i\). We have that \(u_1\) dominates \(u_2\) and is in turn dominated by \(\eta\). It follows that each \(u_1\) has out-degree \(n\). Similarly, each \(u_2\) dominates \(\eta\) and is in turn dominated by \(u_1\) so each \(u_2\) has out-degree \(n\). The new vertex \(\eta\) dominates the \(n\) vertices \(u_1\) and is dominated by the vertices \(u_2\) so \(\eta\) also has out-degree \(n\). We find that \(\mathbf{\Phi}\) is a balanced tournament. The vertices \(u_i\) for a fixed \(i\) induce a subgraph of \(\mathbf{\Phi}\) isomorphic to \(\mathbf{T}\).
\end{proof}

Since every regular balanced hypertournament is a balanced hypertournament we have that our earlier \autoref{cor:hypertournament_embedding} implies that every finite tournament of order \(m\) can be embedded in a finite balanced tournament of order \(\kappa(2)^m=3^m\). In order to prove that \(\mathcal{T}_n=\mathcal{R}_n\) all we needed was an embedding of any finite \(n\)-hypertournament into a finite balanced \(n\)-hypertournament. An \(n\)-ary generalization of the preceding lemma would suffice to prove \autoref{thm:finite_regular_nary_rps_generate} with a tighter bound on the size of the balanced hypertournament an arbitrary order \(k\) hypertournament embeds in.

This bound has algebraic significance because it helps us search for identities. By the proof that the finite tournament magmas generate \(\mathcal{T}_2\) (loc. cit.) and our previous lemma we have that \(\mathcal{T}_2\models\alpha(x_1,\dots,x_k)\approx\beta(x_1,\dots,x_k)\) if and only if \(\alpha\approx\beta\) is modeled by each binary \(\rps\) magma of order \(2k+1\). If we use instead \autoref{cor:hypertournament_embedding} we can only show that \(\mathcal{T}_2\models\alpha(x_1,\dots,x_k)\approx\beta(x_1,\dots,x_k)\) if and only if \(\alpha\approx\beta\) is modeled by each binary \(\rps\) magma of order \(3^k\).

An identical analysis takes place in the \(n\)-ary case. Let \(h_n(k)\) denote the least natural such that each \(n\)-hypertournament of order \(k\) is contained in some balanced \(n\)-hypertournament of order \(h_n(k)\). For example, we know that \(h_2(k)\le2k+1\). We have that \(\mathcal{T}_n\models\alpha(x_1,\dots,x_k)\approx\beta(x_1,\dots,x_k)\) if and only if \(\alpha\approx\beta\) is modeled by each \(n\)-ary \(\rps\) magma of order \(h_n(k)\). In general we know that \(h_n(k)\le\kappa(n)^k\).

We leave open the problem of generalizing our previous lemma to \(n>2\), with the hope that such a generalization will give a subexponential bound for \(h_n(k)\).

Since we know the relationship between \(\mathcal{T}_n\) and \(\mathcal{R}_n\) we turn to the variety \(\mathcal{P}_n\). We know that \(\mathcal{P}_n\models f(x,\dots,x,y)\approx f(x,y,\dots,y)\) so \(\mathcal{P}_n\) has a nontrivial equational theory. It turns out that \(\mathcal{T}_n\) is properly contained in \(\mathcal{P}_n\). In order to show this we first exhibit a member of \(\mathcal{P}_2\) which will be useful to us.

Let \(\mathbf{B}\coloneqq(B,g)\) be the canonical \(\rps(3,2)\) magma corresponding to the original RPS game. Let \(\sigma\in\perm(B)\) be given by \(\sigma(r)\coloneqq p\), \(\sigma(p)\coloneqq s\), and \(\sigma(s)\coloneqq r\). Define a new magma \(\mathbf{B}_\sigma\coloneqq(B,g_\sigma)\) where for \(x,y\in B\) we set \(g_\sigma(x,y)\coloneqq\sigma(g(x,y))\). The Cayley table for \(\mathbf{B}_\sigma\) is given in \autoref{fig:permuted_table}. Note that \(\mathbf{B}_\sigma\) is essentially polyadic, strongly fair, and nondegenerate and as such belongs to \(\prps_2\) and hence \(\mathcal{P}_2\).

\begin{figure}[ht]
	\begin{tabular}{r|*{3}{c}}
		& \(r\) & \(p\) & \(s\) \\ \hline
		\(r\) & \(p\) & \(s\) & \(p\) \\
		\(p\) & \(s\) & \(s\) & \(r\) \\
		\(s\) & \(p\) & \(r\) & \(r\)
	\end{tabular}
	\caption{Cayley table for \(\mathbf{B}_\sigma\)}
	\label{fig:permuted_table}
\end{figure}

We use this magma to prove our final theorem of this section.

\begin{theorem}
We have \(\mathcal{T}_n<\mathcal{P}_n\) for \(n>1\).
\end{theorem}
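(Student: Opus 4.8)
The plan is to prove properness by separating the two varieties with a single identity. The inclusion $\mathcal{T}_n\le\mathcal{P}_n$ is already in hand: by \autoref{thm:finite_regular_nary_rps_generate} we have $\mathcal{T}_n=\mathcal{R}_n$, and since $\rps_n\subset\prps_n$ we get $\mathcal{R}_n=\mathbf{V}(\rps_n)\le\mathbf{V}(\prps_n)=\mathcal{P}_n$. So it remains only to produce an identity that holds throughout $\mathcal{T}_n$ yet fails somewhere in $\mathcal{P}_n$. The natural candidate is idempotence, $f(x,\dots,x)\approx x$.

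First I would check that idempotence holds in all of $\mathcal{T}_n$. Every $n$-hypertournament magma is conservative, so $f(x,\dots,x)\in\{x\}$ and hence $f(x,\dots,x)=x$; that is, the idempotent law lies in $\operatorname{Id}(\tour_n)$. Since $\mathcal{T}_n=\mathbf{V}(\tour_n)=\operatorname{Mod}(\operatorname{Id}(\tour_n))$, the identity is modeled by the entire variety $\mathcal{T}_n$, not merely by the generating class.

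Next I would exhibit a member of $\mathcal{P}_n$ violating idempotence. For $n=2$ the magma $\mathbf{B}_\sigma$ constructed just above serves directly: it lies in $\prps_2\subseteq\mathcal{P}_2$, yet $g_\sigma(r,r)=\sigma(g(r,r))=\sigma(r)=p\neq r$. For general $n>1$ I would reuse the twisting construction from the proof that $\rps_n\subsetneq\prps_n$. Fix an $\rps(m,n)$ magma $\mathbf{A}\coloneqq(A,f)$ with $m=\kappa(n)$ (so that $n<\varpi(m)$ and such a magma exists, e.g.\ a regular one), pick any nontrivial $\sigma\in\perm(A)$, and form $\mathbf{A}_\sigma\coloneqq(A,f_\sigma)$ with $f_\sigma\coloneqq\sigma\circ f$. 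As already noted, $\mathbf{A}_\sigma\in\prps_n\subseteq\mathcal{P}_n$: it is essentially polyadic via $\sigma\circ g$, it is strongly fair because $f_\sigma^{-1}(a)=f^{-1}(\sigma^{-1}(a))$ merely permutes the fibers, and it has the same order and arity as $\mathbf{A}$. Choosing $a\in A$ with $\sigma(a)\neq a$ and using that the conservative operation $f$ is idempotent, we get $f_\sigma(a,\dots,a)=\sigma(f(a,\dots,a))=\sigma(a)\neq a$.

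Combining these, $\mathcal{P}_n\not\models f(x,\dots,x)\approx x$ while $\mathcal{T}_n\models f(x,\dots,x)\approx x$, so $\mathcal{T}_n\neq\mathcal{P}_n$; together with $\mathcal{T}_n\le\mathcal{P}_n$ this yields $\mathcal{T}_n<\mathcal{P}_n$. There is no deep obstacle here; the only points requiring care are that the separating identity must be verified on the whole generated variety $\mathcal{T}_n$ rather than just on $\tour_n$ (handled by $\mathcal{T}_n=\operatorname{Mod}(\operatorname{Id}(\tour_n))$), and that the non-idempotent witness must be a genuine $\prps_n$ magma of an order permitted by the numerical condition \eqref{eq:numerical_condition}---which is why for $n>2$ one cannot reuse the order-$3$ magma $\mathbf{B}_\sigma$ and instead twists a regular $\rps(\kappa(n),n)$ magma.
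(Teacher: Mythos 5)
Your proof is correct, but it separates the two varieties with a different identity than the paper does. You use the idempotent law \(f(x,\dots,x)\approx x\): it holds throughout \(\mathcal{T}_n\) because every hypertournament magma is conservative (the paper itself remarks that idempotence is redundant given conservativity), and it fails in the twisted magma \(\mathbf{A}_\sigma\coloneqq(A,\sigma\circ f)\), which is exactly the paper's own witness for \(\rps_n\subsetneq\prps_n\), so its membership in \(\prps_n\) is already on record. The paper instead imports the identity \((x(yz))z\approx x((xy)z)\) from the known equational theory of binary tournament algebras, transports it to arity \(n\) via the derived binary term \(\alpha(x,y)=f(x,y,\dots,y)\), and then checks by a short computation that \(\mathbf{B}_\sigma\) (and hence \(\mathbf{A}_\sigma\), which contains an isomorphic copy of it) violates it. Your route is more elementary: the separating identity is verified on the generating class with no computation and no appeal to outside results, and its failure in \(\mathbf{A}_\sigma\) is a one-line consequence of \(f\) being idempotent and \(\sigma\) nontrivial. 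What the paper's choice buys is a separating identity that survives even after one imposes idempotence, i.e.\ it shows the two varieties differ by more than the obvious fact that \(\prps\) magmas need not be idempotent; but for the bare statement \(\mathcal{T}_n<\mathcal{P}_n\) your argument suffices. You were also right to flag the two points requiring care: the identity must be checked on \(\operatorname{Mod}(\operatorname{Id}(\tour_n))\) rather than merely on \(\tour_n\), and for \(n>2\) the order-\(3\) witness must be replaced by one of order \(\kappa(n)\) so that the numerical condition \eqref{eq:numerical_condition} is met.
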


\begin{proof}
Since \(\rps_n\subset\prps_n\) and \(\mathcal{T}_n=\mathcal{R}_n\) we have that \(\mathcal{T}_n\le\mathcal{P}_n\). Given \(\mathbf{A}\in\rps_n\) with \(\mathbf{A}\coloneqq(A,f)\) we have that \(\mathbf{A}'\in\rps_2\) where \(\mathbf{A}'\coloneqq(A,f')\) is given by \(f'(x,y)\coloneqq f(x,y,\dots,y)\). Let \(\alpha(x,y)\) denote the term \(f(x,y,\dots,y)\). It follows that since \(\mathcal{T}_2\models(x(yz))z\approx x((xy)z)\)\cite[p.115]{crvenkovic} we have that
	\[
		\mathcal{T}_n\models\alpha(\alpha(x,\alpha(y,z)),z)\approx\alpha(x,\alpha(\alpha(x,y),z)).
	\]

Let \(\mathbf{A}\coloneqq(\Z_{\kappa(n)})_n(\lambda)\) for some \(n\)-sign function \(\lambda\). Note that \(\mathbf{A}'\) must contain a copy of the canonical \(\rps(3,2)\) magma, for given two distinct elements \(u\) and \(v\) with \(u\to v\) we have that \(u\) is dominated by \(\frac{\kappa(n)-1}{2}\) elements other than \(u\) and \(v\) while \(v\) dominates \(\frac{\kappa(n)-1}{2}\) elements other than \(u\) and \(v\). There then exists some \(w\) with \(u\to v\to w\to u\). Permute the outputs of the basic operation of \(\mathbf{A}\) to obtain an algebra \(\mathbf{A}_\sigma\) analogous to our construction of \(\mathbf{B}_\sigma\) from \(\mathbf{B}\) as depicted in \autoref{fig:permuted_table}. Note that \(\mathbf{A}_\sigma\) contains an isomorphic copy of \(\mathbf{B}_\sigma\). We have that \(\mathbf{B}_\sigma\not\models(x(yz))z\approx x((xy)z)\) for
	\[
		(r(ps))s=(rr)s=ps=r\neq p=rr=r(ss)=r((rp)s)
	\]
in \(\mathbf{B}_\sigma\). This implies that
	\[
		\mathbf{A}_\sigma\not\models\alpha(\alpha(x,\alpha(y,z)),z)\approx\alpha(x,\alpha(\alpha(x,y),z))
	\]
and hence \(\mathcal{T}_n<\mathcal{P}_n\).
\end{proof}

\section{Counting RPS and PRPS magmas}
Given an order \(m\) and an arity \(n\) we determine the number of \(\prps(m,n)\) magmas of a given order on a fixed set of size \(m\). We denote the number of \(\prps(m,n)\) magmas on a fixed set of size \(m\) by \(\lvert\prps(m,n)\rvert\). We subsequently count how many regular \(\rps\) magmas \(\mathbf{G}_n(\lambda)\) exist given a finite group \(\mathbf{G}\). We denote the number of such magmas by \(\lvert\rps(\mathbf{G},n)\rvert\). Finally, we count the total number of \(\rps\) \(n\)-magmas on a fixed set of size \(m\), which we denote by \(\lvert\rps(m,n)\rvert\).

In order to perform these counts we must determine the number of regular partitions of \(\binom{A}{k}\) into \(m\) subsets of size \(\frac{1}{m}\lvert\binom{A}{k}\rvert\). That is, we must give the number \(\mathcal{B}(m,k)\) of regular partitions of a set of size \(\lvert\binom{A}{k}\rvert=\binom{m}{k}\) into \(m\) subsets of size \(\frac{1}{m}\binom{m}{k}\). In order to do this we give the number \(\mathscr{P}(m,s)\) of regular partitions of a set of size \(s\) into \(m\) subsets of size \(\frac{s}{m}\).

\begin{lemma}
Let \(m,s\in\N\) with \(m\mid s\). We have that
	\[
		\mathscr{P}(m,s)=\frac{1}{m!}\prod_{\ell=0}^{m-1}\binom{s-\ell\frac{s}{m}}{\frac{s}{m}}.
	\]
\end{lemma}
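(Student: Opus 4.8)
The plan is to count \emph{ordered} tuples of blocks first and then pass to unordered partitions by dividing out the number of reorderings. Fix a set $S$ with $\lvert S\rvert=s$ and write $t\coloneqq\frac{s}{m}$ for the common block size, which is a natural number since $m\mid s$. First I would build an arbitrary ordered tuple $(B_1,\dots,B_m)$ of pairwise disjoint subsets of $S$, each of size $t$, whose union is $S$, by making $m$ successive choices. Once $B_1,\dots,B_\ell$ have been chosen, exactly $\ell t$ elements of $S$ have been used, so there remain $s-\ell t$ elements from which to pick the $t$ elements of $B_{\ell+1}$; this can be done in $\binom{s-\ell t}{t}$ ways. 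Multiplying over $\ell=0,\dots,m-1$ shows that the number of such ordered tuples is
	\[
		\prod_{\ell=0}^{m-1}\binom{s-\ell t}{t}=\prod_{\ell=0}^{m-1}\binom{s-\ell\frac{s}{m}}{\frac{s}{m}},
	\]
where the final factor (at $\ell=m-1$) equals $\binom{t}{t}=1$, as it must since the last block is forced.

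Next I would observe that forgetting the order of the blocks is an $m!$-to-one map from ordered tuples onto regular partitions of $S$ into $m$ blocks of size $t$. Provided $t\ge 1$, the $m$ blocks are nonempty and pairwise disjoint, hence distinct as sets, so the $m!$ permutations of any fixed ordered tuple $(B_1,\dots,B_m)$ produce $m!$ \emph{distinct} ordered tuples, all representing the same unordered partition; conversely every ordered tuple representing a given partition arises as such a permutation. Dividing the count of ordered tuples by $m!$ therefore yields
	\[
		\mathscr{P}(m,s)=\frac{1}{m!}\prod_{\ell=0}^{m-1}\binom{s-\ell\frac{s}{m}}{\frac{s}{m}},
	\]
as claimed.

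The only point requiring care is the justification that the order-forgetting map is exactly $m!$-to-one, and this is precisely where equal positive block sizes enter: equality of the sizes makes every reordering of a tuple admissible (so that each partition is overcounted by the full symmetric group $S_m$), while positivity $t\ge 1$ guarantees the blocks are genuinely distinct, so that no reordering silently collapses two tuples. I would note that the hypothesis $t=\frac{s}{m}\ge 1$ holds in every application of this lemma, since there $s=\binom{m}{k}\ge m$ for the relevant range $1\le k\le n<\varpi(m)$. Everything else is a routine sequential-counting computation, and no deeper structural input is needed.
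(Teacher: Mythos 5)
Your proof is correct and follows essentially the same route as the paper's: build the blocks sequentially, obtaining the product of binomial coefficients as a count of ordered tuples, then divide by \(m!\) to pass to unordered partitions. The only difference is that you spell out why the order-forgetting map is exactly \(m!\)-to-one (distinctness of the blocks when \(\frac{s}{m}\ge1\)), a point the paper asserts without elaboration.
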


\begin{proof}
Let \(X\) be a set of size \(s\). By assumption \(m\mid s\) so we can produce a regular partition of \(X\) into \(m\) subsets. We do this in the following manner. First choose \(\frac{s}{m}\) elements from the \(s\) elements in \(X\) to form the first class of the partition, then choose \(\frac{s}{m}\) elements from the \(s-\frac{s}{m}\) elements in \(X\) not already chosen to form the second class of the partition, and so on. There are
	\[
		\prod_{\ell=0}^{m-1}\binom{s-\ell\frac{s}{m}}{\frac{s}{m}}
	\]
ways to specify a partition in this way. Since partitions are unordered we have counted each regular partition of \(X\) exactly \(m!\) times, once for each order we can place on the members of a partition. The result follows.
\end{proof}

The following special case will be relevant to us.

\begin{corollary}
Let \(m,k\in\N\) and suppose that \(m\mid\binom{m}{k}\). We have that
	\[
		\mathcal{B}(m,k)=\frac{1}{m!}\prod_{\ell=0}^{m-1}\binom{\binom{m}{k}-\ell\frac{1}{m}\binom{m}{k}}{\frac{1}{m}\binom{m}{k}}.
	\]
\end{corollary}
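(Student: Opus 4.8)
The plan is to recognize this corollary as the direct specialization of the preceding lemma to the case \(s=\binom{m}{k}\). By its definition in the surrounding text, \(\mathcal{B}(m,k)\) is the number of regular partitions of a set of size \(\binom{m}{k}\) into \(m\) subsets of size \(\frac{1}{m}\binom{m}{k}\). Since the quantity \(\mathscr{P}(m,s)\) depends only on the cardinality \(s\) of the set being partitioned and not on any further structure, we have exactly \(\mathcal{B}(m,k)=\mathscr{P}\left(m,\binom{m}{k}\right)\). First I would observe that the lemma's hypothesis \(m\mid s\) becomes \(m\mid\binom{m}{k}\) upon substituting \(s=\binom{m}{k}\), and this is precisely the hypothesis assumed in the corollary, so the lemma is applicable.

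It then remains only to substitute \(s=\binom{m}{k}\) into the lemma's formula, using that \(\frac{s}{m}=\frac{1}{m}\binom{m}{k}\) in this case, which yields the stated expression for \(\mathcal{B}(m,k)\). There is no genuine obstacle here: the content is entirely carried by the previous lemma, and this corollary merely packages that result in the form needed for the subsequent counting arguments, where the sets \(\binom{A}{k}\) with \(\lvert A\rvert=m\) are the ones actually being partitioned.

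\begin{proof}
By definition \(\mathcal{B}(m,k)\) is the number of regular partitions of a set of size \(\binom{m}{k}\) into \(m\) subsets of size \(\frac{1}{m}\binom{m}{k}\), so that \(\mathcal{B}(m,k)=\mathscr{P}\left(m,\binom{m}{k}\right)\). The assumption \(m\mid\binom{m}{k}\) is exactly the divisibility hypothesis of the previous lemma with \(s=\binom{m}{k}\), and in this case \(\frac{s}{m}=\frac{1}{m}\binom{m}{k}\). Substituting into the formula of the lemma gives
	\[
		\mathcal{B}(m,k)=\mathscr{P}\left(m,\binom{m}{k}\right)=\frac{1}{m!}\prod_{\ell=0}^{m-1}\binom{\binom{m}{k}-\ell\frac{1}{m}\binom{m}{k}}{\frac{1}{m}\binom{m}{k}},
	\]
as claimed.
\end{proof}
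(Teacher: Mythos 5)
Your proof is correct and takes exactly the same route as the paper, which simply takes \(s=\binom{m}{k}\) in the preceding lemma; you have merely spelled out the substitution and the verification of the divisibility hypothesis in more detail. Nothing further is needed.
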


\begin{proof}
Take \(s=\binom{m}{k}\) in the previous lemma.
\end{proof}

We are now ready to compute \(\lvert\prps(m,n)\rvert\).

\begin{theorem}
Let \(m,n\in\N\) with \(m\neq1\) and \(n<\varpi(m)\). We have that
	\[
		\lvert\prps(m,n)\rvert=\prod_{k=1}^n\prod_{\ell=0}^{m-1}\binom{\binom{m}{k}-\ell\frac{1}{m}\binom{m}{k}}{\frac{1}{m}\binom{m}{k}}.
	\]
\end{theorem}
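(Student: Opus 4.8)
The plan is to set up a bijection between $\prps(m,n)$ magmas on a fixed set $A$ of size $m$ and sequences $(C_k)_{k=1}^n$, where each $C_k=\{C_{k,r}\}_{r\in A}$ is a regular partition of $\binom{A}{k}$ whose classes are \emph{labeled} by the elements of $A$. Once this correspondence is in place, the choices of the $C_k$ are independent across $k$, so the count factors as a product over $1\le k\le n$, and each factor is the labeled analogue of $\mathcal{B}(m,k)$ computed earlier.

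First I would verify the forward direction. Given $\mathbf{A}=(A,f)\in\prps(m,n)$, essential polyadicity produces $g\colon\pow_{\le n}(A)\to A$ with $f(a_1,\dots,a_n)=g(\{a_1,\dots,a_n\})$, and for each $1\le k\le n$ I set $C_{k,r}\coloneqq g^{-1}(r)\cap\binom{A}{k}$. These classes partition $\binom{A}{k}$ and are labeled by $r\in A$. The crucial point is that the number of tuples in $A_k$ whose set of components equals a prescribed $k$-set is a constant $N_k$ depending only on $k$ and $n$, not on the particular $k$-set; hence $\lvert f^{-1}(r)\cap A_k\rvert=N_k\lvert C_{k,r}\rvert$, and strong fairness is therefore \emph{equivalent} to each $C_k$ being regular. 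By \autoref{thm:numerical_condition} each class then has the (integer) size $\frac{1}{m}\binom{m}{k}$. The reverse direction is precisely the construction in the proof of \autoref{thm:numerical_condition}: a sequence of labeled regular partitions yields $g$, hence $f$, and the resulting magma is essentially polyadic, strongly fair, and nondegenerate. Since $f$ is determined by $g$ and $g$ by the partitions, the two assignments are mutually inverse, giving the bijection.

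Next I would count, for each fixed $k$, the labeled regular partitions of $\binom{A}{k}$. Writing $q\coloneqq\frac{1}{m}\binom{m}{k}$, I assign a $q$-subset to the first label, then a $q$-subset of the remainder to the second, and so on, obtaining $\prod_{\ell=0}^{m-1}\binom{\binom{m}{k}-\ell q}{q}$ choices. No division by $m!$ occurs here precisely because the classes carry distinct labels, so this number equals $m!\,\mathcal{B}(m,k)$. Taking the product over $1\le k\le n$ then yields the claimed formula for $\lvert\prps(m,n)\rvert$.

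The main obstacle is establishing the bijection, and within it the one genuinely substantive step: showing that strong fairness corresponds exactly to regularity of \emph{each} $C_k$ separately. This rests on the uniform-multiplicity fact that every $k$-set underlies the same number $N_k$ of tuples in $A_k$; the remainder is bookkeeping. A secondary point worth stating explicitly is that one must count \emph{labeled} partitions, which accounts for the absence of the $\frac{1}{m!}$ factor present in $\mathcal{B}(m,k)$, since the partition classes are indexed by the elements of $A$ that serve as the possible winning items.
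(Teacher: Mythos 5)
Your proposal is correct and follows essentially the same route as the paper: both identify $\prps(m,n)$ magmas on $A$ with sequences of $A$-labeled regular partitions of $\binom{A}{k}$ for $1\le k\le n$ and then count $m!\,\mathcal{B}(m,k)$ labeled partitions for each $k$, your only difference being that you count the labeled partitions sequentially rather than as (unordered partition) $\times$ ($m!$ orderings). Your added justification of the bijection via the uniform multiplicity $N_k$ is the same observation the paper uses inside the proof of \autoref{thm:numerical_condition}, so nothing is missing.
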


\begin{proof}
Let \(A\) be a fixed set with \(m\) elements. The members of \(\prps(m,n)\) with \(A\) as their universe are in bijection with regular ordered partitions of \(\binom{A}{k}\) into \(m\) subsets for \(1\le k\le n\). That is, in order to obtain a member of \(\prps(m,n)\) on \(A\) first choose for each \(1\le k\le n\) a partition of \(\binom{A}{k}\) into \(m\) subsets. There are \(\mathcal{B}(m,k)\) ways to do this for any given \(k\). Choose an order on the selected partitions. Since our partitions are partitions of \(\binom{A}{k}\) into \(m\) subsets there are \(m!\) ways to order each partition. This ordering corresponds to assigning to each equivalence class an element of \(A\). It follows that there are a total of
	\begin{align*}
		\prod_{k=1}^nm!\mathcal{B}(m,k) &= \prod_{k=1}^nm!\left(\frac{1}{m!}\prod_{\ell=0}^{m-1}\binom{\binom{m}{k}-\ell\frac{1}{m}\binom{m}{k}}{\frac{1}{m}\binom{m}{k}}\right)\\
		&= \prod_{k=1}^n\prod_{\ell=0}^{m-1}\binom{\binom{m}{k}-\ell\frac{1}{m}\binom{m}{k}}{\frac{1}{m}\binom{m}{k}}
	\end{align*}
ways to make these choices. Each such choice corresponds to the basic operation of a \(\prps(m,n)\) magma and each \(\prps(m,n)\) with universe \(A\) is uniquely described by such a choice.
\end{proof}

The value of \(\lvert\rps(\mathbf{G},n)\rvert\) is an even more direct computation.

\begin{theorem}
\label{thm:rps_G_n_count}
Let \(m,n\in\N\) with \(m\neq1\) and \(n<\varpi(m)\). Given a group \(\mathbf{G}\) of order \(m\) we have that
	\[
		\lvert\rps(\mathbf{G},n)\rvert=\prod_{k=1}^nk^{\frac{1}{m}\binom{m}{k}}.
	\]
\end{theorem}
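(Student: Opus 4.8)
The plan is to reduce the count of regular \(\rps\) magmas \(\mathbf{G}_n(\lambda)\) to a direct count of \((\beta,n)\)-chiralities, and then to count those by counting orbits. First I would invoke the preceding lemma: for a fixed sequence of orbit representatives \(\beta\) the signor \(\zeta_{\beta,n}\colon\chir_n(\mathbf{G},\beta)\to\sgn_n(\mathbf{G})\) is a bijection, and (by the remark following it) the magma \(\mathbf{G}_n(\lambda)=\mathbf{G}_n(\beta,\zeta_{\beta,n}^{-1}(\lambda))\) is well-defined independently of \(\beta\). The injectivity argument in that lemma shows that distinct chiralities \(\gamma\) yield distinct basic operations \(f_\gamma\), so the map \(\lambda\mapsto\mathbf{G}_n(\lambda)\) is a bijection from \(\sgn_n(\mathbf{G})\) onto the collection of regular \(\rps\) magmas obtainable from \(\mathbf{G}\). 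Hence
	\[
		\lvert\rps(\mathbf{G},n)\rvert=\lvert\sgn_n(\mathbf{G})\rvert=\lvert\chir_n(\mathbf{G},\beta)\rvert
	\]
for any fixed \(\beta\), and it suffices to count chiralities.

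Next I would count \(\lvert\chir_n(\mathbf{G},\beta)\rvert\) directly from the definition. A chirality is a sequence \(\gamma=\{\gamma_k\}_{1\le k\le n}\) of maps \(\gamma_k\colon\Psi_k\to G\) subject only to the constraint \(\gamma_k(\psi)\in\beta_k(\psi)\) for each orbit \(\psi\in\Psi_k\). Since each representative \(\beta_k(\psi)\) is a \(k\)-set, there are exactly \(k\) admissible values for \(\gamma_k(\psi)\), and these choices are independent across all \(k\) and all \(\psi\). Therefore
	\[
		\lvert\chir_n(\mathbf{G},\beta)\rvert=\prod_{k=1}^n k^{\lvert\Psi_k\rvert}.
	\]

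It remains to evaluate \(\lvert\Psi_k\rvert\), the number of orbits of \(\binom{G}{k}\) under the \(k\)-extension \(L_k\) of the left-multiplication action. Here I would use \autoref{thm:free_action_extension}: because \(1\le k\le n<\varpi(m)\) we have \(\gcd(k,m)=1\), so \(L_k\) is free and every orbit has size exactly \(\lvert G\rvert=m\). As the orbits partition \(\binom{G}{k}\), which has \(\binom{m}{k}\) elements, this forces \(\lvert\Psi_k\rvert=\frac{1}{m}\binom{m}{k}\). Substituting into the product gives
	\[
		\lvert\rps(\mathbf{G},n)\rvert=\prod_{k=1}^n k^{\frac{1}{m}\binom{m}{k}},
	\]
as claimed. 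I expect the only delicate point to be the reduction in the first paragraph: one must be sure that distinct sign functions (equivalently, distinct chiralities for a fixed \(\beta\)) really do produce distinct basic operations, so that no overcounting occurs. This is precisely the content of the bijectivity lemma, and once it is in hand the remaining orbit count is routine.
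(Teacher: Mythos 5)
Your proof is correct and follows essentially the same route as the paper: fix $\beta$, note that freeness of the $k$-extensions gives $\frac{1}{m}\binom{m}{k}$ orbits in $\binom{G}{k}$, and multiply the $k$ independent choices per orbit. The only difference is that you explicitly justify, via the injectivity of the signor map, that distinct chiralities yield distinct magmas — a point the paper's proof leaves implicit — which is a reasonable bit of added care but not a different argument.
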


\begin{proof}
Observe that for each \(1\le k\le n\) we have that \(\binom{G}{k}\) breaks up into \(\frac{1}{m}\binom{m}{k}\) orbits under the action of \(\mathbf{G}\). Fix a choice of orbit representatives \(\beta\). For each orbit \(\psi\) of \(\binom{G}{k}\) we must choose one of the \(k\) elements of \(\beta_k(\psi)\) to be \(\gamma_k(\psi)\). For a given \(k\) there are thus \(k^{\frac{1}{m}\binom{m}{k}}\) ways to make this choice, yielding a total of
	\[
		\prod_{k=1}^nk^{\frac{1}{m}\binom{m}{k}}
	\]
choices for \(\mathbf{G}_n(\beta,\gamma)\), as claimed.
\end{proof}

It seems to be more challenging to give an elementary formula for \(\lvert\rps(m,n)\rvert\). Given a polynomial \(p\in\Z[x_1,\dots,x_n]\) and a monomial \(x\coloneqq x_1^{s_1}\cdots x_n^{s_n}\) we write \(\mathcal{C}(x,p)\) to indicate the coefficient of \(x\) in \(p\). Given a finite set \(S=\{z_1,\dots,z_s\}\) of variables denote by \(\sum S\) the polynomial \(z_1+\dots+z_s\).

\begin{theorem}
Let \(m,n\in\N\) such that \(m\neq1\) and \(n<\varpi(m)\). Take \(A\coloneqq\{x_1,\dots,x_m\}\), let \(y_k\coloneqq(x_1\cdots x_m)^{\frac{1}{m}\binom{m}{k}}\), and define
	\[
		p_k\coloneqq\prod_{S\in\binom{A}{k}}\sum S.
	\]
We have that
	\[
		\lvert\rps(m,n)\rvert=\prod_{k=1}^n\mathcal{C}(y_k,p_k).
	\]
\end{theorem}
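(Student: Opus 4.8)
The plan is to reduce counting $\rps(m,n)$ magmas to counting, for each arity-level $k$, the \emph{balanced pointings} of $\binom{A}{k}$, and then to recognize each such count as a monomial coefficient of $p_k$.

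First I would set up the dictionary. Fix $A=\{x_1,\dots,x_m\}$. Any $\mathbf{A}\coloneqq(A,f)\in\rps(m,n)$ is essentially polyadic, so $f(a_1,\dots,a_n)=g(\{a_1,\dots,a_n\})$ for a unique $g\colon\pow_{\le n}(A)\to A$. Decomposing $\pow_{\le n}(A)=\bigsqcup_{k=1}^{n}\binom{A}{k}$ shows that $g$ is the same data as a tuple $(g_1,\dots,g_n)$ with $g_k\colon\binom{A}{k}\to A$ its restriction. Under this dictionary, conservativity of $\mathbf{A}$ is exactly the pointing condition $g_k(U)\in U$ for every $U$, and strong fairness is exactly the requirement that each fiber $g_k^{-1}(r)$ have the common size $\frac{1}{m}\binom{m}{k}$ (an integer precisely because $n<\varpi(m)$, as in \autoref{thm:numerical_condition}); nondegeneracy is automatic since $n<\varpi(m)\le m$. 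Call a $g_k$ satisfying both conditions a balanced pointing. Because the restrictions $g_k$ for distinct $k$ may be chosen completely independently, $\lvert\rps(m,n)\rvert=\prod_{k=1}^{n}N_k$, where $N_k$ is the number of balanced pointings of $\binom{A}{k}$.

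Next I would encode $N_k$ as a coefficient. Expanding $p_k=\prod_{S\in\binom{A}{k}}\sum S$ by distributivity produces one monomial for each way of selecting a single variable from each factor $\sum S$, and these selections are in bijection with the pointings $g_k$ (the functions with $g_k(S)\in S$). The monomial produced by $g_k$ is $\prod_{S\in\binom{A}{k}}g_k(S)=\prod_{i=1}^{m}x_i^{\lvert g_k^{-1}(x_i)\rvert}$, so it equals $y_k=(x_1\cdots x_m)^{\frac{1}{m}\binom{m}{k}}$ if and only if $\lvert g_k^{-1}(x_i)\rvert=\frac{1}{m}\binom{m}{k}$ for every $i$ --- that is, if and only if $g_k$ is balanced. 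Collecting terms, the coefficient $\mathcal{C}(y_k,p_k)$ counts exactly the balanced pointings, whence $N_k=\mathcal{C}(y_k,p_k)$ and $\lvert\rps(m,n)\rvert=\prod_{k=1}^{n}\mathcal{C}(y_k,p_k)$.

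The computation is otherwise routine; the only steps needing care are verifying the two halves of the dictionary --- that ``conservative, essentially polyadic, and strongly fair'' translates precisely into ``each $g_k$ is a balanced pointing'' with the correct balance value $\frac{1}{m}\binom{m}{k}$ --- and checking that the expansion of $p_k$ enumerates pointings bijectively without overcounting, so that no symmetry factor (unlike the $\frac{1}{m!}$ appearing in $\mathscr{P}(m,s)$, where partitions are unordered) intrudes here, since a pointing directly assigns a label in $A$ to each $k$-set. I do not anticipate a genuine obstacle, only the bookkeeping of this correspondence.
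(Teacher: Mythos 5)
Your proposal is correct and follows essentially the same route as the paper: both identify an \(\rps(m,n)\) magma with a balanced pointing of the \(n\)-complete hypergraph on \(A\), decompose by the size \(k\) of the edge, and read off the count of balanced pointings of \(\binom{A}{k}\) as the coefficient of \(y_k\) in the expansion of \(p_k\). Your write-up simply makes explicit the dictionary (conservative \(\leftrightarrow\) pointing, strongly fair \(\leftrightarrow\) balanced, independence across \(k\)) that the paper's shorter proof leaves implicit.
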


\begin{proof}
Each \(\rps(m,n)\) magma with universe \(A\) corresponds to a pointing \(g\) of \(\mathbf{A}_n\), the \(n\)-complete hypergraph with vertex set \(A\), such that \(\lvert g^{-1}(x_i)\cap\binom{A}{k}\rvert=\lvert g^{-1}(x_j)\cap\binom{A}{k}\rvert\) for each \(i\) and \(j\). Fixing some \(k\), the number of such ways to choose an element from each \(k\)-set in \(A\) is \(\mathcal{C}(y_k,p_k)\) as there is one factor of \(p_k\) for each \(k\)-set in \(A\) and we require that each \(x_i\) is chosen exactly \(\frac{1}{m}\binom{m}{k}\) times from among these factors by taking the \(y_k\) coefficient. Taking the product we obtain the total number of ways to choose a pointing \(g\).
\end{proof}

This result is the \(n\)-ary generalization of a formula given without a literature citation on the On-Line Encyclopedia of Integer Sequences for the number of labeled balanced tournaments\cite{oeisA007079}, which is also the quantity \(\lvert\rps(m,2)\rvert\). An asymptotic formula has been obtained via analytic methods\cite{mckay}.

\section{Automorphisms of regular RPS magmas}
\label{sec:automorphisms}
We describe the automorphism groups of regular \(\rps\) magmas.

\begin{proposition}
\label{prop:left_multiplication_automorphism}
Let \(\mathbf{A}\coloneqq\mathbf{G}_n(\lambda)\) be a regular \(\rps\) magma. There is a canonical embedding of \(\mathbf{G}\) into \(\autg(\mathbf{A})\).
\end{proposition}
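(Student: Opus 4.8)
The plan is to show that the left-multiplication action $L \colon \mathbf{G} \to \permg(G)$ corestricts to an injective homomorphism $\mathbf{G} \to \autg(\mathbf{A})$; that is, that each permutation $L(t) \colon x \mapsto tx$ is an automorphism of $\mathbf{A}$ and that $t \mapsto L(t)$ is injective. Injectivity is immediate, since the regular action is faithful: if $L(t) = \id$ then $t = te = e$. The substance of the argument is therefore to verify that each $L(t)$ respects the basic operation $f$, i.e.\ that $t \cdot f(a_1, \dots, a_n) = f(ta_1, \dots, ta_n)$ for all $a_1, \dots, a_n \in G$.

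First I would reduce this to a statement about the set-function $g \colon \pow_{\le n}(G) \to G$, since $f(a_1, \dots, a_n) = g(\{a_1, \dots, a_n\})$ and $\{ta_1, \dots, ta_n\} = t \cdot \{a_1, \dots, a_n\}$, where $t \cdot U$ abbreviates $(L_k(t))(U)$ for a $k$-set $U$. Because $L(t)$ is a bijection of $G$, the set $t \cdot U$ has exactly as many distinct elements as $U$, so $t \cdot U$ lies in $\binom{G}{k}$ whenever $U$ does, and in fact in the same orbit $\psi \in \Psi_k$. It thus suffices to prove $g(t \cdot U) = t \cdot g(U)$ for every $U \in \binom{G}{k}$ and every $1 \le k \le n$.

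The key computation invokes freeness of the $k$-extension $L_k$, guaranteed by \autoref{thm:free_action_extension} since $n < \varpi(\lvert G\rvert)$, which is precisely what makes $g$ well-defined. Writing $U = (L_k(s))(\beta_k(\psi))$ for the unique $s \in G$, the definition of $g$ gives $g(U) = (L(s))(\gamma_k(\psi)) = s\gamma_k(\psi)$. Then $t \cdot U = (L_k(ts))(\beta_k(\psi))$, and since $ts$ is again the unique group element carrying $\beta_k(\psi)$ to $t \cdot U$, the definition of $g$ yields $g(t \cdot U) = (ts)\gamma_k(\psi) = t(s\gamma_k(\psi)) = t \cdot g(U)$. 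This establishes the equivariance and hence that $L(t)$ is an automorphism; as $L$ is already a group homomorphism into $\permg(G)$, the corestriction $\mathbf{G} \to \autg(\mathbf{A})$ is the desired embedding.

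The argument is essentially a bookkeeping verification, so I do not expect a genuine obstacle. The one point requiring care is the appeal to freeness: it guarantees both that $g$ is well-defined and that the element translating $\beta_k(\psi)$ to $t \cdot U$ is exactly $ts$, rather than merely some element in the same coset modulo a stabilizer. Making that uniqueness explicit is what converts the associativity of the action, $t \cdot (s \cdot U) = (ts) \cdot U$, into the equivariance of $g$.
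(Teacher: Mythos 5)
Your proof is correct and is precisely the argument the paper has in mind: the paper omits the verification entirely, remarking only that the proposition is ``an obvious extension of Cayley's Theorem,'' and your computation---using freeness of the $k$-extensions to pin down the unique translating element $ts$ and deduce $g(t\cdot U)=t\cdot g(U)$---is the intended bookkeeping. Your care on the freeness point is well placed, since that is exactly what makes $g$ well-defined and the equivariance non-vacuous.
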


The preceding proposition is an obvious extension of Cayley's Theorem from elementary group theory.

We would like to know whether there are any other automorphisms of regular \(\rps\) magmas. By counting we know that there must be regular \(\rps\) magmas with automorphism groups larger than \(\mathbf{G}\).

\begin{proposition}
For each arity \(n\in\N\) with \(n\neq1\) and each group \(\mathbf{G}\) of composite order \(m\in\N\) with \(n<\varpi(m)\) there exists a regular \(\rps(m,n)\) magma \(\mathbf{A}\coloneqq\mathbf{G}_n(\lambda)\) such that \(\lvert\autg(\mathbf{A})\rvert>\lvert\mathbf{G}\rvert\).
\end{proposition}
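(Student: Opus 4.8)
The plan is to build, for each such $\mathbf{G}$, an explicit regular $\rps$ magma whose automorphism group properly contains the image $L(\mathbf{G})$ of the left-multiplication action, which is already a subgroup of $\autg(\mathbf{A})$ by \autoref{prop:left_multiplication_automorphism}. Since $n\neq1$ forces $\varpi(m)\ge3$, the order $m$ is odd, and as $m$ is odd and composite the group $\mathbf{G}$ is solvable and hence possesses a proper nontrivial normal subgroup $\mathbf{N}\trianglelefteq\mathbf{G}$. Write $\mathbf{Q}\coloneqq\mathbf{G}/\mathbf{N}$, $a\coloneqq\lvert N\rvert$, $b\coloneqq\lvert Q\rvert$, and let $\pi\colon G\to Q$ be the quotient map; both $a$ and $b$ are odd, exceed $1$, and every prime dividing either divides $m$ and so exceeds $n$. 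By the converse part of \autoref{thm:numerical_condition} I may therefore fix a regular $\rps(a,n)$ operation $g_N$ on $\mathbf{N}$ and a regular $\rps(b,n)$ operation $g_Q$ on $\mathbf{Q}$.

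Next I would assemble these into an ``imprimitive'' operation on $G$. Given a nonempty $U\in\pow_{\le n}(G)$, first select the winning coset $\bar w\coloneqq g_Q(\pi(U))\in\pi(U)$, put $U_w\coloneqq U\cap\pi^{-1}(\bar w)$, and then declare the winner to be $c\cdot g_N(c^{-1}U_w)$ for any $c\in U_w$, noting that $c^{-1}U_w\subseteq N$. Defining $g(U)$ in this way and then $f$ from $g$ as usual, conservativity and essential polyadicity are immediate, and nondegeneracy holds since $m>n$. Two facts must be checked: that $c\cdot g_N(c^{-1}U_w)$ does not depend on the choice of $c\in U_w$, and that the resulting $f$ is invariant under left multiplication by $G$. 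Both reduce to the left-invariance of $g_N$ and $g_Q$ together with normality of $\mathbf{N}$, via $g_N(h^{-1}V)=h^{-1}g_N(V)$ for $h\in N$ and $g_Q(\bar s\,\pi(U))=\bar s\,g_Q(\pi(U))$ for $s\in G$. This left-invariance is the technical heart of the argument and the step I expect to be most delicate to state cleanly.

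Once left-invariance is in hand, strong fairness comes for free: $L(\mathbf{G})\le\autg(\mathbf{A})$ acts transitively on $G$, and for $r,r'\in G$ any $L(s)$ with $L(s)(r)=r'$ restricts to a bijection from $f^{-1}(r)\cap A_k$ onto $f^{-1}(r')\cap A_k$, so these fibers all have equal size for each $k$. Hence $\mathbf{A}$ is conservative, essentially polyadic, strongly fair, nondegenerate, and left-$\mathbf{G}$-invariant, which is exactly to say that it is a regular $\rps(m,n)$ magma of the form $\mathbf{G}_n(\lambda)$.

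Finally I would extract the extra automorphisms from the block structure. The left cosets of $\mathbf{N}$ form $b$ blocks permuted by $L(\mathbf{G})$, and for each block $C=xN$ with identification $\iota_C(t)\coloneqq xt$ and each tuple $(t_C)_C\in N^{b}$, the map acting on each $C$ as $\iota_C\circ L^{\mathbf N}(t_C)\circ\iota_C^{-1}$ fixes every block setwise; it preserves within-block winners because each $L^{\mathbf N}(t_C)$ is an automorphism of $g_N$, and it preserves all cross-block winners because those depend only on $\pi$-images. These maps form a subgroup of $\autg(\mathbf{A})$ isomorphic to $\mathbf{N}^{b}$, of order $a^{b}$, meeting $L(\mathbf{G})$ only in the image of $L(\mathbf{N})$. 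Since $a,b\ge3$ we have $a^{b}>ab=m$, whence $\lvert\autg(\mathbf{A})\rvert\ge a^{b}>\lvert\mathbf{G}\rvert$, as required. The one genuine obstacle beyond routine bookkeeping is the left-invariance verification of the second paragraph; the appeal to solvability used to produce $\mathbf{N}$ can be replaced by a direct construction of a normal subgroup whenever $\mathbf{G}$ is abelian or a $p$-group.
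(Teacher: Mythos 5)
Your proposal is correct, but it takes a genuinely different route from the paper. The paper's proof is a short, non-constructive divisibility argument: if every \(\mathbf{G}_n(\lambda)\) had automorphism group of order exactly \(m\), then each isomorphism class would contribute \((m-1)!\) magmas to \(\lvert\rps(\mathbf{G},n)\rvert=\prod_{k=1}^nk^{\frac{1}{m}\binom{m}{k}}\); since \(m\) is composite, \(\varpi(m)\le m-1\) divides \((m-1)!\), yet every prime factor of that product is at most \(n<\varpi(m)\), a contradiction. You instead construct the witness: a proper nontrivial normal subgroup \(\mathbf{N}\trianglelefteq\mathbf{G}\) gives a block-imprimitive regular \(\rps\) operation assembled from regular \(\rps\) operations on \(\mathbf{N}\) and \(\mathbf{G}/\mathbf{N}\), and the coset-wise translations give an explicit automorphism subgroup of order \(a^b>ab=m\). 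Your verifications (well-definedness of the within-coset winner, left-\(\mathbf{G}\)-equivariance, and the fact that the block maps fix the winning coset and commute with the within-coset game) all go through, and the conclusion \(a^{b-1}>b\) holds since \(a,b\ge3\). What your approach buys is explicitness and a much stronger bound, \(\lvert\autg(\mathbf{A})\rvert\ge\lvert N\rvert^{[G:N]}\) rather than merely \(>m\). What it costs is the existence of \(\mathbf{N}\): for an arbitrary group of odd composite order, ``solvable, hence possessing a proper nontrivial normal subgroup'' is the Feit--Thompson odd order theorem, an input vastly heavier than anything in the paper's counting argument, and it should be cited explicitly rather than asserted in passing (it is avoidable only in special cases such as abelian, \(p\)-, or nilpotent groups, as you note). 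One small imprecision: ``cross-block winners depend only on \(\pi\)-images'' is not quite the right statement, since the final winner also depends on the within-coset game; what your construction actually shows is that the block maps preserve the winning coset and intertwine the \(\mathbf{N}\)-level operation.
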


\begin{proof}
Suppose towards a contradiction that for each magma \(\mathbf{A}\coloneqq\mathbf{G}_n(\lambda)\) we have that \(\autg(\mathbf{A})\cong\mathbf{G}\). Each isomorphism class of magmas of the form \(\mathbf{G}_n(\beta,\gamma)\) must contribute
	\[
		\frac{\lvert\perm(G)\rvert}{\lvert G\rvert}=\frac{m!}{m}=(m-1)!
	\]
to the value of \(\lvert\rps(\mathbf{G},n)\rvert\) so \((m-1)!\) divides \(\lvert\rps(\mathbf{G},n)\rvert\). Since we take \(m\) to be composite we have that \(\varpi(m)\mid(m-1)!\) and hence \(\varpi(m)\mid\lvert\rps(\mathbf{G},n)\rvert\). As \(\lvert\rps(\mathbf{G},n)\rvert=\prod_{k=1}^nk^{\frac{1}{m}\binom{m}{k}}\) and each of the factors \(k^{\frac{1}{m}\binom{m}{k}}\) is a natural number which is a power of \(k\le n<\varpi(m)\) we have that \(\varpi(m)\nmid\lvert\rps(\mathbf{G},n)\rvert\). This is a contradiction.
\end{proof}

We have a similar result for groups of prime order.

\begin{proposition}
For each arity \(n\in\N\) and each odd prime \(p\) such that \(1\neq n\le p-2\) there exists a regular \(\rps(p,n)\) magma \(\mathbf{A}\coloneqq(\Z_p)_n(\lambda)\) such that \(\lvert\autg(\mathbf{A})\rvert>\lvert\mathbf{G}\rvert\).
\end{proposition}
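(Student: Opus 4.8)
The plan is to follow the spirit of the composite-order proposition but to replace its ambient count with the action of the multiplicative group $\Z_p^\times\cong\aut(\Z_p)$ on the regular $\rps$ magmas, thereby manufacturing a magma with an extra \emph{multiplicative} automorphism. By \autoref{prop:left_multiplication_automorphism} we already have $L(\Z_p)\le\autg(\mathbf{A})$ for every regular $\rps$ magma $\mathbf{A}=(\Z_p)_n(\lambda)$, so $\lvert\autg(\mathbf{A})\rvert\ge p$; the work is to force strict inequality for at least one $\lambda$.

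First I would set up the action. Let $\mathcal{M}$ be the set of labeled regular $\rps$ magmas $(\Z_p)_n(\lambda)$ on $\Z_p$, so that by \autoref{thm:rps_G_n_count} we have
\[
\lvert\mathcal{M}\rvert=\prod_{k=1}^{n}k^{\frac{1}{p}\binom{p}{k}}.
\]
For $a\in\Z_p^\times$ write $m_a\colon x\mapsto ax$. A direct check gives $m_a L_b m_a^{-1}=L_{ab}$, so each $m_a$ normalizes $L(\Z_p)$ in $\perm(\Z_p)$; relabeling a regular $\rps$ magma by $m_a$ therefore yields another regular $\rps$ magma, and $\Z_p^\times$ acts on $\mathcal{M}$. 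A magma $\mathbf{A}\in\mathcal{M}$ is fixed by $m_a$ precisely when $m_a\in\autg(\mathbf{A})$. Since $m_a$ fixes $0$ whereas every nonidentity element of $L(\Z_p)$ is fixed-point-free, any $m_a$ with $a\ne1$ lies outside $L(\Z_p)$; hence a magma with nontrivial $\Z_p^\times$-stabilizer already satisfies $\lvert\autg(\mathbf{A})\rvert>p$.

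I would then extract such a magma by counting modulo a prime. Suppose $q$ is a prime dividing $p-1$ with $q>n$, and let $H\le\Z_p^\times$ have order $q$ (Cauchy's theorem). As $\lvert H\rvert=q$ is prime, each $H$-orbit on $\mathcal{M}$ has size $1$ or $q$, so the number of $H$-fixed magmas is congruent to $\lvert\mathcal{M}\rvert$ modulo $q$. Every prime divisor of $\lvert\mathcal{M}\rvert=\prod_{k=1}^{n}k^{\frac{1}{p}\binom{p}{k}}$ is at most $n$, so $q>n$ forces $q\nmid\lvert\mathcal{M}\rvert$ and hence guarantees an $H$-fixed magma $\mathbf{A}$. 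For this $\mathbf{A}$ we have $H\le\autg(\mathbf{A})$ and $H\cap L(\Z_p)=\{\id\}$, so $\autg(\mathbf{A})\supseteq L(\Z_p)\rtimes H$ and $\lvert\autg(\mathbf{A})\rvert\ge pq>p$.

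The hard part is the input to that last step: producing a prime $q$ that divides $p-1$ and exceeds $n$. This is exactly where the hypothesis $1\ne n\le p-2$ must be spent, and it is the delicate arithmetic heart of the argument, since one must preclude $p-1$ from being assembled solely out of primes $\le n$; I would devote the core of the proof to analyzing the factorization of $p-1$ against $n$. That this multiplicative route is unavoidable can be confirmed in advance: any automorphism of $(\Z_p)_n(\lambda)$ restricts to an automorphism of its underlying $2$-set tournament, which is a vertex-transitive tournament of prime degree, whose automorphism group is therefore transitive of prime degree and hence primitive, and, being the automorphism group of a tournament, is not $2$-transitive; by Burnside's theorem on primitive groups of prime degree it lies in $\mathrm{AGL}(1,p)$. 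Thus every automorphism is affine, so enlarging $\autg(\mathbf{A})$ beyond $L(\Z_p)$ genuinely requires a multiplicative automorphism of the kind produced above.
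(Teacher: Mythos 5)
Your setup is fine and genuinely different from the paper's, but the step you defer to ``the delicate arithmetic heart'' is not delicate --- it is false, and the proof cannot be completed along these lines. You need a prime \(q\) with \(q\mid p-1\) and \(q>n\), and such a prime need not exist under the hypothesis \(1\neq n\le p-2\): for \(p=17\), \(n=2\) we have \(p-1=2^4\); for \(p=13\), \(n=3\) we have \(p-1=2^2\cdot 3\); for \(p=5\), \(n=2\) we have \(p-1=2^2\). In each case every prime divisor of \(p-1\) is at most \(n\), so \(q\) divides \(\lvert\mathcal{M}\rvert=\prod_{k=1}^{n}k^{\frac{1}{p}\binom{p}{k}}\) for every prime \(q\mid p-1\) and the orbit count modulo \(q\) tells you nothing. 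Everything up to that point is correct (the \(\Z_p^*\)-action on labeled regular \(\rps\) magmas, the fixed-point congruence for a \(q\)-group, the fact that a nontrivial multiplicative stabilizer forces \(\lvert\autg(\mathbf{A})\rvert\ge pq\)), but the hypothesis simply does not supply the prime you need. The paper takes a different and more direct route: it fixes a primitive root \(r\), sets \(\varphi(x)\coloneqq rx\), argues that \(\varphi(U)\notin\obv(U)\) for every relevant \(U\) because otherwise the order-\((p-1)\) map \(\varphi\) would stabilize the \((k+1)\)-element set \(\obv(U)\) with \(k+1\le p-2\), and then defines \(\lambda\) on \(\varphi\)-orbit representatives of obverse classes and extends equivariantly. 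Note also that where your counting argument does apply (a prime \(q>n\) dividing \(p-1\)), you do not need it: the order-\(q\) subgroup's setwise stabilizer of any \(\obv(U)\) acts on a set of size \(k+1\le n<q\) and hence acts trivially, so an invariant \(\lambda\) can be chosen orbit by orbit.

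Your closing Burnside observation is correct and worth keeping: every automorphism of \((\Z_p)_n(\lambda)\) preserves the derived binary tournament, whose automorphism group is transitive of prime degree, hence primitive, not \(2\)-transitive, and therefore contained in \(\mathrm{AGL}(1,p)\); so enlarging \(\autg(\mathbf{A})\) beyond \(\Z_p\) genuinely requires a multiplicative automorphism. But this cuts against the statement in exactly the cases where your prime \(q\) fails to exist. For \(p=5\), \(n=2\) the four magmas \((\Z_5)_2(\lambda)\) are the circulant tournaments on \(\Z_5\) with connection set \(S\) containing one element of \(\{1,4\}\) and one of \(\{2,3\}\), and \(aS=S\) forces \(a=1\) for each such \(S\); hence all four have automorphism group of order exactly \(5\), and the proposition fails there. (The paper's construction also runs into trouble at this point: \(\varphi^{(p-1)/2}=m_{-1}\) always stabilizes \(\obv(\{a\})\) setwise while interchanging its two members, so the claimed freeness of the \(\langle\varphi\rangle\)-action on obverse classes, and with it the equivariant extension of \(\lambda\), does not hold as stated.) So the gap in your argument is not a technicality you can analyze away; any correct proof must either restrict the range of \((p,n)\) or find automorphisms by some mechanism other than an invariant sign function for a large cyclic multiplicative subgroup.
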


\begin{proof}
Let \(r\) be a primitive root modulo \(p\) and define \(\varphi\colon\Z_p\to\Z_p\) by \(\varphi(x)\coloneqq rx\). We claim that there is a \(\lambda\in\sgn_n(\Z_p)\) for which \(\varphi\in\aut((\Z_p)_n(\lambda))\).

Let \(\Z_p^*\) denote the nonzero members of \(\Z_p\). Suppose towards a contradiction that \(U\in\binom{\Z_p^*}{k}\) for some \(1\le k\le n-1\) with \(\varphi(U)\in\obv(U)\). Writing the members of \(U\) as powers of \(r\) we have that
	\[
		U=\{r^{s_1},\dots,r^{s_k}\}.
	\]
Since \(\varphi(U)\in\obv(U)\) we can take
	\[
		\varphi(U)=\{-r^{s_1},-r^{s_1}+r^{s_2},\dots,-r^{s_1}+r^{s_k}\}.
	\]
By the definition of \(\varphi\) we also have that
	\[
		\varphi(U)=\{r^{s_1+1},\dots,r^{s_k+1}\}.
	\]
It follows that
	\[
		\varphi^2(U)=\{-r^{s_1+1},-r^{s_1+1}+r^{s_2+1},\dots,-r^{s_1+1}+r^{s_k+1}\}.
	\]
so \(\varphi^2(U)\in\obv(\varphi(U))=\obv(U)\). This implies that \(\obv(U)\) is closed under the action of \(\varphi\). Note that \(\varphi\) has order \(p-1\) while \(\lvert\obv(U)\rvert=k+1\le n\le p-2\). The only way this can occur is if the action of \(\varphi\) on \(\obv(U)\) is trivial, but this is impossible.

Since the action of \(\varphi\) on the collections of obverse \(k\)-sets from \(\Z_p^*\) for \(1\le k\le n-1\) is free we can choose from each \(\orb_\varphi(\obv(U))\) a representative collection of obverse \(k\)-sets, say \(\obv(U)\), and from that representative collection we can choose a \(k\)-set, say \(U\), for which we set \(\lambda(\obv(U))\coloneqq U\). Extending this choice by the action of \(\varphi\) we produce a \(\lambda\in\sgn_n(\Z_p)\) such that \(\varphi\in\aut((\Z_p)_n(\lambda))\).
\end{proof}

The case of a \((p-1)\)-ary regular \(\rps\) magma of prime order \(p\) is different.

\begin{proposition}
For each odd prime \(p\) and any \(\lambda\in\sgn_{p-1}(\Z_p)\) we have that \(\autg((\Z_p)_{p-1}(\lambda))\cong\Z_p\).
\end{proposition}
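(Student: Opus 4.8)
The plan is to pin down $\autg(\mathbf{A})$ for $\mathbf{A}\coloneqq(\Z_p)_{p-1}(\lambda)$ by combining the known lower bound with a rigidity argument that exploits the maximality of the arity $n=p-1$. By \autoref{prop:left_multiplication_automorphism} the left-multiplication (here, translation) action embeds $\Z_p$ into $\autg(\mathbf{A})$, so it remains only to prove $\autg(\mathbf{A})\le\Z_p$. Since the translations $\tau_c\colon x\mapsto x+c$ already form a copy of $\Z_p$ acting regularly on the universe, it suffices to show that the stabilizer of $0$ is trivial: given any $\phi\in\aut(\mathbf{A})$, the composite $\tau_{-\phi(0)}\circ\phi$ fixes $0$, and if every $0$-fixing automorphism is the identity then $\phi$ itself is a translation.

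The hard part is to find a feature of the operation rigid enough to force this, and the key observation is that because $n=p-1$ is as large as the numerical constraint \eqref{eq:numerical_condition} permits, the operation $g$ is defined on the $(p-1)$-subsets of $\Z_p$, which are exactly the complements $\Z_p\setminus\{x\}$ of singletons. First I would record that, since translations are automorphisms, $g$ is translation-equivariant, so $g(\Z_p\setminus\{x\})=g(\Z_p\setminus\{0\})+x$; writing $d\coloneqq g(\Z_p\setminus\{0\})$, conservativity forces $d\in\Z_p\setminus\{0\}$ because the winner of $\Z_p\setminus\{0\}$ must lie in that set. Thus $g(\Z_p\setminus\{x\})=d+x$ with $d$ a generator of $\Z_p$, and crucially this holds for every $\lambda$.

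Now let $\phi$ be a $0$-fixing automorphism. Since $\phi$ is a bijection of the universe it sends $\Z_p\setminus\{x\}$ to $\Z_p\setminus\{\phi(x)\}$, so the automorphism identity applied to any tuple with underlying set $\Z_p\setminus\{x\}$ yields $\phi(d+x)=d+\phi(x)$ for all $x$. This functional equation says $\phi$ commutes with translation by $d$; as $d$ generates $\Z_p$ and $\phi(0)=0$, a short induction on multiples of $d$ gives $\phi(md)=md$ for all $m$, hence $\phi=\id$. Composing with translations then gives $\autg(\mathbf{A})\cong\Z_p$.

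I expect the only genuine subtleties to be bookkeeping: verifying that a $(p-1)$-subset is a legitimate argument of $g$ (its size equals the arity $n$), confirming $d\neq0$ from conservativity, and using primality to know $d$ generates $\Z_p$. It is worth contrasting this with the preceding proposition, where for $n\le p-2$ the multiplicative map $x\mapsto rx$ survives as an automorphism precisely because every relevant obverse class has size $k+1\le p-2<p-1$; when $n=p-1$ the operation necessarily ``sees'' the top-dimensional sets of size $p-1$, and the equivariance above leaves no room for any multiplicative symmetry.
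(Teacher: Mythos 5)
Your proof is correct, and it takes a genuinely different route from the paper's. The paper argues by contradiction on an arbitrary nontrivial \(0\)-fixing automorphism \(\varphi'\): it picks a nontrivial cycle \(a_1,\dots,a_k\) of \(\varphi'\) in \(\Z_p^*\) (necessarily of length \(k\le p-1=n\), which is where maximality of the arity enters), feeds the cycle into \(f\), and uses conservativity plus essential polyadicity to conclude that the winner of the \(\varphi'\)-invariant set \(\{a_1,\dots,a_k\}\) would have to be a fixed point of the cycle --- impossible. You instead isolate a structural fact the paper never states: since the \((p-1)\)-sets are exactly the co-singletons \(\Z_p\setminus\{x\}\) and translations are automorphisms, \(g(\Z_p\setminus\{x\})=x+d\) for a single nonzero \(d\) determined by \(\lambda\), and any \(0\)-fixing automorphism must then commute with translation by the generator \(d\), forcing it to be the identity. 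Both arguments exploit that \(n=p-1\) is extremal, but in different ways: the paper needs it so that every possible cycle length fits into a single application of \(f\), while you need it only so that the top-dimensional edges of the hypertournament are visible to the operation. Your version buys a cleaner, non-case-based computation and a reusable observation (the ``winner offset'' \(d\) on co-singletons); the paper's version is closer in spirit to its other rigidity arguments and uses conservativity more heavily. All the steps you flag as bookkeeping do check out: \(\binom{p}{p-1}=p\) co-singletons, \(d\ne0\) by conservativity of regular \(\rps\) magmas, \(\phi(\Z_p\setminus\{x\})=\Z_p\setminus\{\phi(x)\}\) for a bijection \(\phi\), and \(d\) generating \(\Z_p\) by primality.
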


\begin{proof}
In this proof we use additive notation so that \(L_a(x)\coloneqq a+x\). Suppose towards a contradiction that there exists some \(\varphi\in\aut((\Z_p)_{p-1}(\lambda))\) such that for each \(a\in\Z_p\) we have that \(\varphi\neq L_a\). Let \(\varphi'\coloneqq L_{-\varphi(0)}\circ\varphi\). It follows that \(\varphi'\in\aut((\Z_p)_{p-1}(\lambda))\) with \(\varphi'(0)=0\) and \(\varphi'\neq L_0=\id_{\Z_p}\). Since \(\varphi'\) is nontrivial and fixes \(0\) there must be some nontrivial cycle of length at most \(p-1\) in \(\Z_p^*\) under the action of \(\varphi'\). Suppose that \(a_1,\dots,a_k\) is such a cycle and let \(f\) denote the basic operation of \((\Z_p)_{p-1}(\lambda)\). Relabeling if necessary we find that
	\[
		f(a_1,\dots,a_1,a_2,\dots,a_k)=a_1
	\]
and hence
	\begin{align*}
		f(a_1,\dots,a_1,a_2,\dots,a_k) &= f(\varphi'(a_k),\dots,\varphi'(a_k),\varphi'(a_1),\dots,\varphi'(a_{k-1}))\\
		&= \varphi'(f(a_k,\dots,a_k,a_1,\dots,a_{k-1}))\\
		&= \varphi'(f(a_1,\dots,a_1,a_2,\dots,a_k))\\
		&= \varphi'(a_1)\\
		&\neq a_1,
	\end{align*}
a contradiction.
\end{proof}

This allows us to count the magmas \((\Z_p)_{p-1}(\lambda)\) up to isomorphism.

\begin{corollary}
Given an odd prime \(p\) the number of isomorphism classes of magmas of the form \((\Z_p)_{p-1}(\lambda)\) is
	\[
		\prod_{k=1}^{p-1}k^{\frac{1}{p}\binom{p}{k}-1}.
	\]
\end{corollary}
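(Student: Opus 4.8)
The plan is to count isomorphism classes by the orbit--stabilizer principle applied to the relabeling action. Let $\mathcal{M}$ denote the finite collection of magmas of the form $(\Z_p)_{p-1}(\lambda)$ with universe $\Z_p$. First I would record the global count: by \autoref{thm:rps_G_n_count}, taking $m=p$ and $n=p-1$, we have
\[
	\lvert\mathcal{M}\rvert=\lvert\rps(\Z_p,p-1)\rvert=\prod_{k=1}^{p-1}k^{\frac{1}{p}\binom{p}{k}}.
\]
The symmetric group $\perm(\Z_p)$ acts on magmas with universe $\Z_p$ by relabeling, and two members of $\mathcal{M}$ are isomorphic exactly when they lie in a common orbit, so the number of isomorphism classes is the number of such orbits meeting $\mathcal{M}$.

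Next I would invoke the preceding proposition, which states that every magma $(\Z_p)_{p-1}(\lambda)$ has automorphism group isomorphic to $\Z_p$, hence of order exactly $p$. Since the stabilizer of a magma under relabeling is precisely its automorphism group, orbit--stabilizer yields the uniform orbit size
\[
	\frac{\lvert\perm(\Z_p)\rvert}{\lvert\autg((\Z_p)_{p-1}(\lambda))\rvert}=\frac{p!}{p}=(p-1)!.
\]
As this size is the same for every member of $\mathcal{M}$, the number of isomorphism classes is $\lvert\mathcal{M}\rvert/(p-1)!$. This is exactly the bookkeeping already used in the proposition on automorphism groups of regular $\rps$ magmas of composite order, where each isomorphism class was seen to contribute $\lvert\perm(G)\rvert/\lvert G\rvert=(m-1)!$ to the total count.

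Finally I would simplify. Writing $(p-1)!=\prod_{k=1}^{p-1}k$ and dividing termwise,
\[
	\frac{1}{(p-1)!}\prod_{k=1}^{p-1}k^{\frac{1}{p}\binom{p}{k}}=\prod_{k=1}^{p-1}k^{\frac{1}{p}\binom{p}{k}-1},
\]
which is the claimed formula. The main obstacle is the step asserting that every isomorphism class contributes a full orbit of the uniform size $(p-1)!$: one must verify that relabeling genuinely carries $\mathcal{M}$ into itself and that each isomorphism class meets $\mathcal{M}$ in a single orbit, so that dividing the global count by $(p-1)!$ is legitimate. This closure-and-uniformity verification---rather than the count from \autoref{thm:rps_G_n_count} or the elementary exponent arithmetic---is where the real content lies, and it is precisely the point at which the order-$p$ rigidity supplied by the preceding proposition (that no $(\Z_p)_{p-1}(\lambda)$ acquires extra automorphisms) must be brought to bear.
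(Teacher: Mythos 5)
Your reconstruction matches the paper's own proof, which is just the one-line observation that the count from \autoref{thm:rps_G_n_count} should be divided by \(\lvert\perm(\Z_p)\rvert/\lvert\autg((\Z_p)_{p-1}(\lambda))\rvert=p!/p=(p-1)!\). But the verification you correctly isolate as ``where the real content lies'' is not a formality: it fails, and this sinks the argument in both your write-up and the paper's. Every magma in the family \(\mathcal{M}\coloneqq\{\,(\Z_p)_{p-1}(\lambda)\mid\lambda\in\sgn_{p-1}(\Z_p)\,\}\) admits the translation subgroup \(T=\{\,L_a\mid a\in\Z_p\,\}\) as automorphisms by \autoref{prop:left_multiplication_automorphism}, and since its automorphism group has order exactly \(p\) that group \emph{equals} \(T\). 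Hence \(\autg(\sigma(\mathbf{A}))=\sigma T\sigma^{-1}\), and \(\sigma(\mathbf{A})\) can lie in \(\mathcal{M}\) only if \(\sigma T\sigma^{-1}=T\), i.e.\ only if \(\sigma\) lies in the normalizer of \(T\) in \(\perm(\Z_p)\), the affine group of order \(p(p-1)\). For \(p\ge5\) almost all relabelings therefore carry \(\mathbf{A}\) outside \(\mathcal{M}\), so an isomorphism class meets \(\mathcal{M}\) in only \(p(p-1)/p=p-1\) magmas rather than \((p-1)!\), and dividing the global count by \((p-1)!\) over-divides by a factor of \((p-2)!\).

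The quantity actually wanted is the number of orbits of \((\Z_p)^{*}\) (the normalizer modulo \(T\)) acting on \(\sgn_{p-1}(\Z_p)\), which is at least \(\frac{1}{p-1}\prod_{k=1}^{p-1}k^{\frac{1}{p}\binom{p}{k}}=(p-2)!\prod_{k=1}^{p-1}k^{\frac{1}{p}\binom{p}{k}-1}\). A concrete check: for \(p=5\) there are \(144\) sign functions, and the three nonidentity scalings \(x\mapsto ax\) act on them without fixed points (already the choice on the translation orbit of \(\{0,1\}\) is reversed by \(x\mapsto-x\), and the other two scalings square to \(-1\)), so Burnside gives \(144/4=36\) isomorphism classes rather than the \(6\) predicted by the displayed formula. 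So your instinct about the weak point is exactly right; repairing the argument requires replacing the division by \((p-1)!\) with an orbit count over \((\Z_p)^{*}\), and the stated formula does not survive that repair.
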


\begin{proof}
This follows directly from \autoref{thm:rps_G_n_count} and that we now know
	\[
		\lvert\autg((\Z_p)_{p-1}(\lambda))\rvert=p
	\]
for any \(\lambda\in\sgn_{p-1}(\Z_p)\).
\end{proof}

Another class of automorphisms of \(\mathbf{G}_n(\lambda)\) comes from those automorphisms of \(\mathbf{G}\) which are compatible with \(\lambda\).

\begin{definition}[\(\lambda\)-automorphism]
Given a group \(\mathbf{G}\) and an \(n\)-sign function \(\lambda\in\sgn_n(\mathbf{G})\) we say that \(\varphi\in\aut(\mathbf{G})\) is a \emph{\(\lambda\)-automorphism} when for any \(U\in\binom{G\setminus\{e\}}{k}\) for \(1\le k\le n-1\) we have that \(\lambda(\obv(U))=U\) implies that \(\lambda(\obv(\varphi(U)))=\varphi(U)\).
\end{definition}

We denote by \(\autg_\lambda(\mathbf{G})\) the subgroup of \(\autg(\mathbf{G})\) consisting of all \(\lambda\)-automorphisms of \(\mathbf{G}\) and by \(\aut_\lambda(\mathbf{G})\) the set of all \(\lambda\)-automorphisms of \(\mathbf{G}\).

\begin{proposition}
\label{prop:lambda_automorphism}
Given a group \(\mathbf{G}\) and an \(n\)-sign function \(\lambda\in\sgn_n(\mathbf{G})\) there is a canonical embedding \(\autg_\lambda(\mathbf{G})\) into \(\autg(\mathbf{G}_n(\lambda))\).
\end{proposition}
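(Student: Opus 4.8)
The plan is to show that each $\lambda$-automorphism $\varphi\in\aut_\lambda(\mathbf{G})$, regarded merely as a permutation of the universe $G$, is already an automorphism of the magma $\mathbf{A}\coloneqq\mathbf{G}_n(\lambda)$; the assignment $\varphi\mapsto\varphi$ will then be the desired embedding. Since $\mathbf{A}$ is essentially polyadic via the map $g\colon\pow_{\le n}(G)\to G$ and $\varphi$ is a bijection (so it preserves the cardinality of each $U\in\binom{G}{k}$), it suffices to verify that $\varphi(g(U))=g(\varphi(U))$ for every nonempty $U$ with $1\le\lvert U\rvert\le n$.

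First I would reduce to sets ``based at $e$'' using the left-multiplication automorphisms furnished by \autoref{prop:left_multiplication_automorphism}, which give $g(c\cdot U)=c\cdot g(U)$ for every $c\in G$. Fix $U=\{u_1,\dots,u_k\}$ and, since $\mathbf{A}$ is conservative, reindex so that $g(U)=u_1$. Translating by $u_1^{-1}$ yields $g(u_1^{-1}U)=e$, and setting $V\coloneqq u_1^{-1}U\setminus\{e\}=\{u_1^{-1}u_2,\dots,u_1^{-1}u_k\}\in\binom{G\setminus\{e\}}{k-1}$ this says exactly that $e$ dominates $V$, i.e. $\lambda(\obv(V))=V$. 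When $k=1$ we have $V=\varnothing$ and the identity $\varphi(g(U))=g(\varphi(U))$ is trivial, so I would dispatch that case separately and assume $k\ge2$, which puts $\lvert V\rvert=k-1$ in the range $1\le k-1\le n-1$ where the $\lambda$-automorphism condition applies.

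The key step uses that $\varphi$ is a \emph{group} homomorphism fixing $e$. Because $\varphi(u_1)^{-1}\varphi(u_j)=\varphi(u_1^{-1}u_j)$, I obtain $\varphi(u_1)^{-1}\varphi(U)=\varphi(u_1^{-1}U)$, whose nonidentity part is $\varphi(V)$. By the same translation argument, $g(\varphi(U))=\varphi(u_1)$ holds if and only if $\lambda(\obv(\varphi(V)))=\varphi(V)$. Now the defining property of a $\lambda$-automorphism, applied to $V$, converts $\lambda(\obv(V))=V$ into $\lambda(\obv(\varphi(V)))=\varphi(V)$. Therefore $g(\varphi(U))=\varphi(u_1)=\varphi(g(U))$, establishing that $\varphi$ preserves $f$.

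Finally I would observe that $\varphi\mapsto\varphi$ is a group homomorphism $\autg_\lambda(\mathbf{G})\to\autg(\mathbf{A})$, since composition on either side is just composition of the underlying self-maps of $G$, and that it is injective because two group automorphisms inducing the same permutation of $G$ coincide. I expect the only delicate point to be the bookkeeping in the ``based at $e$'' reduction: one must use both that $\varphi$ respects the group operation (to push the translation through $\varphi$) and that it fixes $e$ (to identify the nonidentity part of $\varphi(u_1^{-1}U)$ with $\varphi(V)$), together with care at the singleton case $k=1$ that lies outside the stated range of the $\lambda$-automorphism condition.
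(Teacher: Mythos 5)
Your proposal is correct and follows essentially the same route as the paper's own proof: reduce via the left-multiplication automorphisms to the condition $\lambda(\obv(U))=U$ for $U=\{\,a_1^{-1}a_i\mid i\neq1\,\}$, apply the defining property of a $\lambda$-automorphism, and use that $\varphi$ is a group homomorphism to identify $\varphi(a_1)^{-1}\varphi(U)$ with $\varphi(U)$ based at $e$. Your treatment is somewhat more careful than the paper's (explicitly dispatching the $k=1$ case and verifying that $\varphi\mapsto\varphi$ is an injective group homomorphism), but the underlying argument is the same.
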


\begin{proof}
Take \(\varphi\in\aut_\lambda(\mathbf{G})\) and \(\mathbf{A}\coloneqq\mathbf{G}_n(\lambda)\). Let \(a_1,\dots,a_n\in A\) with
	\[
		\{a_1,\dots,a_n\}\in\psi\in\Psi_k.
	\]
Suppose that \(f(a_1,\dots,a_n)=a_1\). This occurs if and only if \(\lambda(\obv(U))=U\) where \(U\coloneqq\{\,a_1^{-1}a_i\mid i\neq1\,\}\). We have that \(\lambda(\obv(U))=U\) if and only if \(\lambda(\obv(\varphi(U)))=\varphi(U)\). This in turn is equivalent to having that \(f(\varphi(a_1),\dots,\varphi(a_n))=\varphi(a_1)\). It follows that
	\[
		\varphi(f(a_1,\dots,a_n))=f(\varphi(a_1),\dots,\varphi(a_n)).
	\]
Thus, \(\varphi\) is an automorphism of \(\mathbf{A}\).
\end{proof}

Those \(n\)-sign functions for which all conjugations are automorphisms will be of interest to us. We denote by \(c_b\colon G\to G\) the conjugation map given by \(c_b(a)\coloneqq bab^{-1}\) and by \(\operatorname{\mathbf{Inn}}(\mathbf{G})\) the inner automorphism group of \(\mathbf{G}\).

\begin{definition}[Correlated \(n\)-sign function]
We say that an \(n\)-sign function \(\lambda\in\sgn_n(\mathbf{G})\) is \emph{correlated} when \(\operatorname{\mathbf{Inn}}(\mathbf{G})\le\autg_\lambda(\mathbf{G})\).
\end{definition}

We can construct all correlated \(n\)-sign functions on a group \(\mathbf{G}\) as follows. Observe that if \(U_1\coloneqq\{a_1,\dots,a_k\}\) and \(U_2\coloneqq\{a_1^{-1},a_1^{-1}a_2,\dots,a_1^{-1}a_k\}\) are obverse \(k\)-sets in \(\mathbf{G}\) and \(c_b\) is an inner automorphism of \(\mathbf{G}\) then we have that
	\[
		c_b(U_1)=\{c_b(a_1),\dots,c_b(a_k)\}
	\]
while
\begin{align*}
	c_b(U_2) &= \{c_b(a_1^{-1}),c_b(a_1^{-1}a_2),\dots,c_b(a_1^{-1}a_k)\}\\
	&= \{(c_b(a_1))^{-1},(c_b(a_1))^{-1}c_b(a_2),\dots,(c_b(a_1))^{-1}c_b(a_k)\}.
\end{align*}
Thus, if \(U_1\) and \(U_2\) are obverses then \(c_b(U_1)\) and \(c_b(U_2)\) will be obverses, as well. We now follow a familiar pattern. Consider the orbits of the sets \(\obv(U)\) under this action of \(\operatorname{\mathbf{Inn}}(\mathbf{G})\). Choose one representative set of obverses \(\obv(U)\) from each orbit and then for each orbit choose a member from its representative. Let that member be \(\lambda(\obv(U))\) and use the action of \(\operatorname{\mathbf{Inn}}(\mathbf{G})\) to extend these choices to an \(n\)-sign function \(\lambda\) defined on all sets of obverses in \(\mathbf{G}\).

It is possible to produce this extension by the action of \(\operatorname{\mathbf{Inn}}(\mathbf{G})\) because if \(U_1\) and \(U_2\) are obverse \(k\)-sets for \(k<\varpi(\lvert G\rvert)\) then it cannot be the case that \(c_b(U_1)=U_2\) for some \(b\in G\setminus\{e\}\). If this were the case then we would have that \(c_b|_{\obv(U_1)}\) is a permutation of \(\obv(U_1)\), which has \(k-1\) elements. It would follow that
	\[
		\lvert c_b|_{\obv(U_1)}\rvert\mid\lvert c_b\rvert\mid\lvert G\rvert,
	\]
but \(\lvert c_b|_{\obv(U_1)}\rvert\) must be divisible by some prime which is at most \(k-1<\varpi(\lvert G\rvert)\), a contradiction.

We give an explicit example of this using the nonabelian group of order \(21\). Let \(\F_7\) denote the field with \(7\) elements and let \(\F_7^*\coloneqq\F_7\setminus\{0\}\). We can take this nonabelian group \(\mathbf{G}\) of order \(21\) to be the group with universe
	\[
		G\coloneqq\{\,a^2x+b\mid(a,b)\in\F_7^*\times\F_7\,\}
	\]
whose multiplication is given by composition of polynomials in \(x\). We consider \(1\)-sets in \(G\) in order to construct a correlated \(\lambda\in\sgn_2(\mathbf{G})\). We identify the \(1\)-set \(\{a\}\) with \(a\) in order to simplify our notation.

\begin{figure}[ht]
	\begin{tabular}{*{3}{c}}
		Orbit of obverse sets & Representative set \(\obv(U)\) & \(\lambda(\obv(U))\) \\ \hline
		\(\{\,\{x+b,x-b\}\mid b\in\F_7^*\,\}\) & \(\{x+1,x+6\}\) & \(x+1\) \\
		\(\{\,\{2x+b,4x+3b\}\mid b\in\F_7\,\}\) & \(\{2x,4x\}\) & \(2x\)
	\end{tabular}
	\caption{A nontrivial correlated sign function}
	\label{fig:correlated_succinct}
\end{figure}

Using the action of \(\operatorname{\mathbf{Inn}}(\mathbf{G})\) we obtain the \(\lambda\in\sgn_2(\mathbf{G})\) given in \autoref{fig:correlated_succinct} and \autoref{fig:correlated_detailed}.

\begin{figure}[ht]
	\begin{tabular}{*{2}{c}}
		\(\obv(U)\) & \(\lambda(\obv(U))\) \\ \hline
		\(\{x+1,x+6\}\) & \(x+1\) \\
		\(\{x+2,x+5\}\) & \(x+2\) \\
		\(\{x+3,x+4\}\) & \(x+3\) \\
		\(\{2x,4x\}\) & \(2x\) \\
		\(\{2x+1,4x+3\}\) & \(2x+1\) \\
		\(\{2x+2,4x+6\}\) & \(2x+2\) \\
		\(\{2x+3,4x+2\}\) & \(2x+3\) \\
		\(\{2x+4,4x+5\}\) & \(2x+4\) \\
		\(\{2x+5,4x+1\}\) & \(2x+5\) \\
		\(\{2x+6,4x+4\}\) & \(2x+6\)
	\end{tabular}
	\caption{A nontrivial correlated sign function}
	\label{fig:correlated_detailed}
\end{figure}

\begin{theorem}
The semidirect product \(\mathbf{G}\rtimes\autg_\lambda(\mathbf{G})\) embeds in \(\autg(\mathbf{G}_n(\lambda))\). In particular, when \(\lambda\) is correlated we have that \(\mathbf{G}\rtimes\operatorname{\mathbf{Inn}}(\mathbf{G})\) embeds in \(\autg(\mathbf{G}_n(\lambda))\).
\end{theorem}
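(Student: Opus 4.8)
The plan is to exhibit the two canonical embeddings established earlier — the left-multiplication embedding of $\mathbf{G}$ from \autoref{prop:left_multiplication_automorphism} and the $\lambda$-automorphism embedding of $\autg_\lambda(\mathbf{G})$ from \autoref{prop:lambda_automorphism} — and show that together they generate a copy of the semidirect product inside $\autg(\mathbf{G}_n(\lambda))$. Write $\mathbf{A}\coloneqq\mathbf{G}_n(\lambda)$. For each $a\in G$ let $L_a\in\autg(\mathbf{A})$ denote left multiplication, and for each $\varphi\in\aut_\lambda(\mathbf{G})$ let $\varphi$ also denote the induced automorphism of $\mathbf{A}$. Define a map $\Theta\colon G\rtimes\autg_\lambda(\mathbf{G})\to\autg(\mathbf{A})$ by $\Theta(a,\varphi)\coloneqq L_a\circ\varphi$, where the semidirect product uses the natural action of $\autg_\lambda(\mathbf{G})$ on $\mathbf{G}$ by evaluation.

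First I would verify that $\Theta$ is a homomorphism. The key computation is the conjugation relation $\varphi\circ L_a\circ\varphi^{-1}=L_{\varphi(a)}$, which holds because $(\varphi\circ L_a\circ\varphi^{-1})(x)=\varphi(a\varphi^{-1}(x))=\varphi(a)x$ using that $\varphi$ is a group homomorphism. From this the multiplication in the semidirect product matches composition in $\autg(\mathbf{A})$: we have $\Theta(a,\varphi)\circ\Theta(b,\psi)=L_a\circ\varphi\circ L_b\circ\psi=L_a\circ L_{\varphi(b)}\circ\varphi\circ\psi=L_{a\varphi(b)}\circ(\varphi\circ\psi)=\Theta\big((a,\varphi)(b,\psi)\big)$, as required.

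Next I would check injectivity. Suppose $\Theta(a,\varphi)=\id_A$, so $L_a\circ\varphi=\id$. Evaluating at the identity $e$ gives $a=L_a(\varphi(e))=L_a(e)=a$, which is vacuous, so instead apply both sides to $e$ directly in the form $\varphi(e)=e$ to get $a\cdot e=e$, forcing $a=e$; then $\varphi=\id$. This shows $\ker\Theta$ is trivial, so $\Theta$ is the desired embedding. The second claim is immediate: when $\lambda$ is correlated we have $\operatorname{\mathbf{Inn}}(\mathbf{G})\le\autg_\lambda(\mathbf{G})$ by definition, so $\mathbf{G}\rtimes\operatorname{\mathbf{Inn}}(\mathbf{G})$ embeds as a subgroup of $\mathbf{G}\rtimes\autg_\lambda(\mathbf{G})$, which in turn embeds in $\autg(\mathbf{A})$.

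The one point requiring genuine care — the main obstacle — is confirming that the semidirect product structure is exactly the abstract one and not merely that the image is a subgroup generated by the two families. I would make sure the action defining $\mathbf{G}\rtimes\autg_\lambda(\mathbf{G})$ is specified as the standard one (each $\varphi$ acting on $\mathbf{G}$ as a group automorphism), so that the conjugation computation above literally realizes the defining relation of the semidirect product. Everything else is routine verification that composing the two embeddings is well-defined, using only that each $\varphi\in\aut_\lambda(\mathbf{G})$ already lands in $\autg(\mathbf{A})$ by \autoref{prop:lambda_automorphism} and that each $L_a$ does so by \autoref{prop:left_multiplication_automorphism}.
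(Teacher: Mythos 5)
Your proposal is correct and takes essentially the same route as the paper: the paper's entire proof is the one-line observation that the result ``follows directly from \autoref{prop:left_multiplication_automorphism} and \autoref{prop:lambda_automorphism},'' and your map \(\Theta(a,\varphi)\coloneqq L_a\circ\varphi\) together with the conjugation relation \(\varphi\circ L_a\circ\varphi^{-1}=L_{\varphi(a)}\) is precisely the verification the paper leaves implicit. (One small cleanup: your injectivity check is not vacuous on the first pass --- evaluating \(L_a\circ\varphi=\id\) at \(e\) gives \(a=L_a(\varphi(e))=\id(e)=e\) directly.)
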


\begin{proof}
This follows directly from \autoref{prop:left_multiplication_automorphism} and \autoref{prop:lambda_automorphism}.
\end{proof}

\section{A group action constraint}
Our construction of regular \(\rps\) magmas and our result constraining the pairs \((m,n)\) for which \(\rps(m,n)\) magmas may exist can be combined to give a constraint on when the extensions of a regular group action are free.

\begin{theorem}
Suppose that \(\alpha\colon\mathbf{G}\to\permg(A)\) is a regular action of a group \(\mathbf{G}\) of order \(m\neq1\). The \(k\)-extensions \(\alpha_k\) of \(\alpha\) are all free for \(1\le k\le n\) if and only if \(n<\varpi(m)\).
\end{theorem}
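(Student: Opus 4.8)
The plan is to prove the two implications separately, in each case leveraging results already assembled: \autoref{thm:free_action_extension} handles the claim that $n<\varpi(m)$ forces every $\alpha_k$ to be free, while the combination of \autoref{thm:action_magmas_are_rps} with \autoref{thm:numerical_condition} handles the converse.

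For the first implication I would begin by reducing to the left-multiplication action. Since $\alpha$ is regular it is isomorphic, as a $\mathbf{G}$-set, to $L\colon\mathbf{G}\to\permg(G)$; fix such an isomorphism $\theta\colon A\to G$. The first step is to observe that $\theta$ induces for each $k$ a bijection $\binom{A}{k}\to\binom{G}{k}$ sending $U$ to $\{\,\theta(a)\mid a\in U\,\}$, which intertwines $\alpha_k$ with $L_k$; consequently $\alpha_k(s)$ fixes a $k$-set exactly when $L_k(s)$ fixes the corresponding one, so $\alpha_k$ is free if and only if $L_k$ is. This is the ``isomorphic actions determine equivalent orbits'' remark, and its verification is routine. \autoref{thm:free_action_extension} then gives that $L_k$ is free for every $1\le k<\varpi(m)$, hence for every $1\le k\le n$, and transporting back along $\theta$ yields the freeness of each $\alpha_k$.

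For the converse I would assume that every $\alpha_k$ with $1\le k\le n$ is free and recognize these as exactly the hypotheses required to form an $\alpha$-action magma. Before invoking that construction one must secure its standing nondegeneracy requirement $n<\lvert A\rvert=m$. To this end, note that if $n=m$ then $\alpha_m$ acts on the single $m$-set $\binom{A}{m}=\{A\}$, fixing it for every $s\in G$; freeness would then force $s=e$ for all $s$, so that $\mathbf{G}$ is trivial and $m=1$, contrary to hypothesis. Thus $n<m$, the construction applies, and choosing any orbit representatives $\beta$ together with any subordinate $\gamma$ produces a magma $\mathbf{A}\coloneqq(A,f)$. By \autoref{thm:action_magmas_are_rps} we have $\mathbf{A}\in\rps\subset\prps$, so $\mathbf{A}\in\prps(m,n)$, and \autoref{thm:numerical_condition} then delivers $n<\varpi(m)$.

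The one delicate point is precisely this nondegeneracy check in the converse: one must confirm that the freeness hypotheses genuinely license the $\alpha$-action magma construction rather than merely producing a formally undefined object. Once $n<m$ is established, both directions collapse to already-proved statements, so I anticipate no obstacle beyond this bookkeeping.
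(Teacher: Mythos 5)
Your proof is correct and follows essentially the same route as the paper: the forward direction defers to \autoref{thm:free_action_extension} (the paper simply reruns that argument for general \(A\), where you instead transport along a \(\mathbf{G}\)-set isomorphism), and the converse combines \autoref{thm:action_magmas_are_rps} with \autoref{thm:numerical_condition} exactly as the paper does. Your explicit check that \(n<m\), needed to license the \(\alpha\)-action magma construction, is a point the paper leaves implicit and is a sound addition.
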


\begin{proof}
We already argued that if \(n<\varpi(m)\) then each of the \(\alpha_k\) are free for \(1\le k\le n\) in the proof of \autoref{thm:free_action_extension} for the case that \(A=G\). The argument in the general case is identical.

Conversely, suppose that each of the \(\alpha_k\) are free for \(1\le k\le n\). By \autoref{thm:action_magmas_are_rps} we have that the corresponding \(\alpha\)-action magma is an \(\rps(m,n)\) magma. By \autoref{thm:numerical_condition} we have that \(n<\varpi(m)\).
\end{proof}

This implies that a particular extension is not free.

\begin{corollary}
Suppose that \(\alpha\colon\mathbf{G}\to\perm(A)\) is a regular action of a group \(\mathbf{G}\) of odd order \(m\neq1\). We have that the \(\varpi(m)\)-extension \(\alpha_{\varpi(m)}\) of \(\alpha\) is not free.
\end{corollary}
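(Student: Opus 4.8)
The plan is to read this off directly from the preceding theorem, which characterizes precisely when the initial segment of extensions $\alpha_1,\dots,\alpha_n$ is simultaneously free. Write $p\coloneqq\varpi(m)$. Since $m\neq1$, the quantity $\varpi(m)$ is a genuine prime dividing $m$, so in particular $p\le m=\lvert A\rvert$ and the $p$-extension $\alpha_p$ is a well-defined map on $\binom{A}{p}$.

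First I would apply the theorem with $n\coloneqq p-1$. As $p-1<p=\varpi(m)$, its ``if'' direction guarantees that every extension $\alpha_k$ with $1\le k\le p-1$ is free. Next I would apply the theorem with $n\coloneqq p$. Here the condition $n<\varpi(m)$ reads $p<p$ and thus fails, so the biconditional tells us that the extensions $\alpha_1,\dots,\alpha_p$ are \emph{not} all free. Combining these two applications, the extension that fails to be free must be $\alpha_p$ itself, since we have already shown each of $\alpha_1,\dots,\alpha_{p-1}$ is free. One may equally phrase this as a proof by contradiction: assuming $\alpha_p$ free would make all of $\alpha_1,\dots,\alpha_p$ free, whence the ``only if'' direction forces $p<p$.

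The argument is purely a bookkeeping application of the biconditional, so I anticipate no genuine obstacle. The one point deserving a moment's attention is the well-definedness of $\alpha_p$, which requires $\varpi(m)\le\lvert A\rvert$; this is immediate since $\varpi(m)$ divides $m=\lvert A\rvert$. I would also note that the hypothesis that $m$ is odd is not actually needed for the argument: the same reasoning shows the $\varpi(m)$-extension fails to be free for every $m\neq1$, the odd case being simply the one of interest in the \(\rps\) setting.
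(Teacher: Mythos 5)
Your argument is correct and is exactly the intended one: the paper states this corollary without an explicit proof, treating it as an immediate consequence of the preceding biconditional, applied just as you do at \(n=\varpi(m)-1\) (all of \(\alpha_1,\dots,\alpha_{\varpi(m)-1}\) free) and at \(n=\varpi(m)\) (not all free). Your side observations --- that \(\alpha_{\varpi(m)}\) is well-defined because \(\varpi(m)\mid m\), and that the oddness hypothesis is not actually needed --- are also accurate.
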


\section{Congruences of finite regular RPS magmas}
We study the structure of the congruence lattices of finite regular \(\rps\) magmas.

\begin{theorem}
Let \(\theta\in\con(\mathbf{A})\) for a regular \(\rps(m,n)\) magma \(\mathbf{A}\coloneqq\mathbf{G}_n(\lambda)\). Given any \(a\in A\) we have that \(a/\theta=aH\) for some subgroup \(\mathbf{H}\le\mathbf{G}\).
\end{theorem}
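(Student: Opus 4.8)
The plan is to reduce the whole statement to a single invariance claim and then locate the genuinely hard step. Set $H\coloneqq e/\theta$, the class of the identity. I claim it suffices to prove that $\theta$ is invariant under every left translation $L_s\colon x\mapsto sx$, meaning that $(x,y)\in\theta$ implies $(sx,sy)\in\theta$ for all $s\in G$. Granting this, both conclusions follow by short computations that exploit the fact that each $L_s$ is an automorphism of $\mathbf{A}$ (\autoref{prop:left_multiplication_automorphism}). First, $H$ is a subgroup: it contains $e$; if $h\in H$ then applying $L_{h^{-1}}$ to $(h,e)\in\theta$ yields $(e,h^{-1})\in\theta$, so $h^{-1}\in H$; and if $h_1,h_2\in H$ then applying $L_{h_1}$ to $(h_2,e)\in\theta$ yields $(h_1h_2,h_1)\in\theta$, whence $(h_1h_2,e)\in\theta$ by transitivity. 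Second, for any $a$ we have $(x,a)\in\theta$ if and only if $(a^{-1}x,e)\in\theta$ (apply $L_{a^{-1}}$, and for the converse its inverse $L_a$), that is, if and only if $a^{-1}x\in H$, i.e. $x\in aH$; hence $a/\theta=aH$. So the theorem is equivalent to the left-invariance of $\theta$.

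To attack left-invariance I would first record the structural consequence of $\theta$ being a congruence on a conservative, essentially polyadic magma. If $(x,y)\in\theta$ lie in a common class $B$ and $U$ is any set with $\lvert U\rvert\le n-1$ and $U\cap B=\varnothing$, then applying the congruence property to $f$ with the elements of $U$ as fixed arguments gives $g(\{x\}\cup U)\mathrel{\theta}g(\{y\}\cup U)$. Since $g$ is conservative and $U$ avoids $B$, the only way these two winners can be $\theta$-related is that $x$ dominates $U$ exactly when $y$ does, and dually that $x$ and $y$ are dominated by the same external configurations. In other words, the members of a single $\theta$-class are indistinguishable from the outside by the domination data, so the induced pointing on $\mathbf{A}/\theta$ is well-defined.

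Next I would feed this external indistinguishability into the defining equivariance $g(sU)=s\,g(U)$ of a regular \(\rps\) magma. Writing $H=e/\theta$ and letting $\delta$ denote a difference of two members of a class, the neighborhood coincidence above becomes a compatibility between the sign data $\lambda$ and translation by the elements attached to $H$: the domination relation is unchanged under the relevant translations on the complement of $H$. The goal is then to upgrade this partial, complement-restricted compatibility into honest closure of $H$ under the group operation, which is precisely the left-invariance of $\theta$.

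The main obstacle is exactly this upgrade, and here the hypothesis $n<\varpi(m)$ is indispensable. Every nonidentity $h\in H$ has order dividing $m$, hence at least $\varpi(m)>n$, so the cyclic subgroup $\langle h\rangle$ has more than $n$ elements and each $L_h$-orbit on $G$ has size $\lvert h\rvert>n$. This is what forbids a class from being anything but a coset: the $\langle h\rangle$-orbits are too large to be ``split'' by the at-most-$n$-element sets that the operation can see, so the complement-restricted invariance of the domination data is forced to be genuine $\langle h\rangle$-invariance, and $H$ must absorb $\langle h\rangle$, products, and inverses. I expect the cleanest way to make this rigid is a counting argument that plays the strong fairness of $\mathbf{A}$ against the freeness of the $k$-extensions $L_k$ for $k\le n$ (\autoref{thm:free_action_extension}) along $\langle h\rangle$-cosets. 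Pinning down this step, rather than the formal reduction of the first paragraph, is where the real work lies.
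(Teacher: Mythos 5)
Your opening reduction is to a statement that is false, so the whole plan cannot be carried out as written. Congruences of \(\mathbf{G}_n(\lambda)\) need not be invariant under left translation. The theorem only asserts that each class \(a/\theta\) is a coset of \emph{some} subgroup, which may vary with the class; it does not assert that all classes are cosets of a single \(H=e/\theta\), which is what left-invariance of \(\theta\) would give. For a concrete counterexample, take any \(\lambda\)-convex proper nontrivial subgroup \(\mathbf{H}\le\mathbf{G}\) (these exist, e.g.\ \(\Z_3\le\Z_9\) for a suitable \(\lambda\in\sgn_2(\Z_9)\)) and let \(\theta\) be the congruence whose only nontrivial class is \(H\), all other classes being singletons. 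Then \((e,h)\in\theta\) for \(h\in H\setminus\{e\}\) but \((s,sh)\notin\theta\) for \(s\notin H\), since \(s/\theta=\{s\}\). More generally, by conservativity every principal congruence \(\cg(\{(e,a)\})\) has exactly one nontrivial class, so it fails left-invariance whenever that class is a proper subgroup. The correct reduction, which is the one the paper uses, is weaker and does not require invariance of \(\theta\): it suffices to show that \(e/\psi\) is a subgroup for \emph{every} congruence \(\psi\), and then for a given \(\theta\) and \(a\) one applies this to the translated congruence \(\psi\coloneqq L_{a^{-1}}(\theta)\) (a congruence because \(L_{a^{-1}}\) is an automorphism), obtaining \(a/\theta=a\bigl(e/\psi\bigr)=aH\).

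Even after repairing the target, your proposal does not supply the substantive step. Your second paragraph's observation (elements of a common class are externally indistinguishable by domination) is sound and is indeed the engine of the paper's argument, but paragraphs three and four only gesture at how to convert it into closure of \(e/\theta\) under products and inverses; the phrase ``I expect the cleanest way to make this rigid is a counting argument'' is where a proof would need to begin. The paper's actual route is different and more concrete: it reduces to principal congruences \(\cg(\{(e,a)\})\), uses that \(m\) is odd to extract a square root \(\sqrt{a}\) and the dichotomy \(e\to\sqrt{a}\) or \(\sqrt{a}\to e\) together with the automorphism \(L_{\sqrt{a}}\) to show \(e/\theta\) is closed under square roots (hence, by finiteness, under squaring), an induction via \(a\theta\le\theta\) to get all of \(\sg^{\mathbf{G}}(\{a\})\), and then a separate domination-dichotomy argument (deriving both \(e\to a\) and \(e\to a^{-1}\), which is impossible) to get closure under products. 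None of these mechanisms appears in your sketch, and your appeal to the freeness of the \(k\)-extensions along \(\langle h\rangle\)-cosets is not developed far enough to judge whether it could substitute for them.
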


\begin{proof}
Consider the principal congruence \(\theta\coloneqq\cg(\{(e,a)\})\) generated by \((e,a)\). Since \(\mathbf{A}\) is conservative \(\theta\) has at most one nontrivial equivalence class, which is the equivalence class containing \(e\) and \(a\). We show that this equivalence class must contain \(\sg^{\mathbf{G}}(\{a\})\), the cyclic subgroup of \(\mathbf{G}\) generated by \(a\).

In order to do this we note that since \(\mathbf{G}\) has odd order it is \(2\)-divisible and the map \(x\mapsto x^2\) is bijective. Let \(\sqrt{a}\) denote the inverse image of \(a\) under this map.

Previously we wrote \(u\to V\) to indicate that \(u\) dominates \(V\) in a hypertournament. We expand on this notation slightly by writing \(u\to v\) rather than \(u\to\{v\}\) in the case that \(V=\{v\}\) is a singleton. Either \(e\to\sqrt{a}\), in which case applying the left-multiplication automorphism \(L_{\sqrt{a}}\) for \(\sqrt{a}\) yields that \(\sqrt{a}\to a\) and we conclude that \((e,\sqrt{a})\in\theta\) or \(\sqrt{a}\to e\), in which case applying \(L_{\sqrt{a}}\) yields that \(a\to\sqrt{a}\) and we conclude that \((\sqrt{a},a)\in\theta\). In either case if \(e/\theta\) contains \(a\) then \(e/\theta\) contains \(\sqrt{a}\). It follows that \(e/\theta\) is closed under taking square roots. Since \(G\) is finite we have that \(e/\theta\) is also closed under squaring. We have that \(e/\theta\supset\{e,a,a^2\}\).

We write \(a\theta\coloneqq L_a(\theta)\) for \(a\in G\) and \(\theta\in\con(\mathbf{A})\). Suppose inductively that \(e/\theta\supset\{a^{s-2},a^{s-1},a^s\}\). We show that \(a^{s+1}\in e/\theta\). Since \(e/\theta\) is an equivalence class of the principal congruence generated by \((e,a)\) applying \(L_a\) shows that \(a/(a\theta)\supset\{a^{s-1},a^s,a^{s+1}\}\) is an equivalence class of the principal congruence generated by \((a,a^2)\). We know that \((a,a^2)\in\theta\) so \(a\theta\le\theta\). Since \(a^{s-1},a^s\in e/\theta\) it follows that \(e/\theta\supset\{a^{s-1},a^s,a^{s+1}\}\) so \(a^{s+1}\in e/\theta\), as desired. We find that \(e/\theta\) contains \(\sg^{\mathbf{G}}(\{a\})\).

We now know that for any congruence \(\theta\in\con(\mathbf{A})\) we have that \(e/\theta\) is a union of cyclic subgroups of \(\mathbf{G}\). Suppose towards a contradiction that \(a,b\in e/\theta\) and \(ab\notin e/\theta\). Note that \(\theta\ge\cg(\{(e,a),(e,b^{-1})\})\). Observe that
	\begin{align*}
		\cg(\{(e,a),(e,b^{-1})\}) &= b^{-1}\cg(\{(b,ba),(b,e)\})\\
		&\ge b^{-1}\cg(\{(e,ba)\})\\
		&\ge b^{-1}\cg(\{(e,baba)\})\\
		&\ge \cg(\{(b^{-1},aba)\})
	\end{align*}
so we have that \(e/\theta\) contains \(aba\).

In order for \(ab\notin e/\theta\) we must have that \(ab\) either dominates everything in \(e/\theta\) or \(ab\) is dominated by everything in \(e/\theta\). In the former case we have that \(ab\to aba\). Applying \(L_{(ab)^{-1}}\) we find that \(e\to a\). Since \(ab\) dominates \(e/\theta\) we have that \((ab)^{-1}\) is dominated by \(e/\theta\). It follows that \(b^{-1}\to b^{-1}a^{-1}\) so \(e\to a^{-1}\). By definition of a regular \(\rps\) magma it is impossible to have both \(e\to a\) and \(e\to a^{-1}\) so we have arrived at a contradiction. The same argument with dominance relations reversed yields a contradiction in the case that \(ab\) is dominated by everything in \(e/\theta\). This establishes that \(e/\theta\) is a subset of \(G\) containing \(e\) which is closed under taking inverses and products. That is, \(e/\theta\) is a subgroup of \(\mathbf{G}\).

Given any \(\theta\in\con(\mathbf{A})\) and some \(a\in G\) we have that
	\[
		a/\theta=a(e/(a^{-1}\theta))=aH
	\]
for some \(\mathbf{H}\le\mathbf{G}\).
\end{proof}

We characterize those subgroups \(\mathbf{H}\le\mathbf{G}\) for which \(a/\theta=aH\) for some \(a\in G\).

\begin{definition}[\(\lambda\)-convex subgroup]
Given a group \(\mathbf{G}\), an \(n\)-sign function \(\lambda\in\sgn_n(\mathbf{G})\), and a subgroup \(\mathbf{H}\le\mathbf{G}\) we say that \(\mathbf{H}\) is \emph{\(\lambda\)-convex} when there exists some \(a\in G\) such that \(a/\theta=aH\) for some \(\theta\in\con(\mathbf{G}_n(\lambda))\).
\end{definition}

Trivially we have that the whole group \(\mathbf{G}\) and the trivial subgroup with universe \(\{e\}\) are both \(\lambda\)-convex for every \(\lambda\).

\begin{proposition}
Let \(\mathbf{G}\) be a finite group of order \(m\) and let \(n<\varpi(m)\). Take \(\lambda\in\sgn_n(\mathbf{G})\) and \(\mathbf{H}\le\mathbf{G}\). The following are equivalent:
	\begin{enumerate}
		\item The subgroup \(\mathbf{H}\) is \(\lambda\)-convex.
		\item There exists a congruence \(\psi\in\con(\mathbf{G}_n(\lambda))\) such that \(e/\psi=H\).
		\item Given \(1\le k\le n-1\) and \(b_1,\dots,b_k\notin H\) either \(e\to\{b_1h_1,\dots,b_kh_k\}\) for every choice of \(h_1,\dots,h_k\in H\) or \(\{b_1h_1,\dots,b_kh_k\}\to e\) for every choice of \(h_1,\dots,h_k\in H\).
	\end{enumerate}
\end{proposition}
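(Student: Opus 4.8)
The plan is to establish the cycle $(2)\Rightarrow(1)$, $(1)\Rightarrow(2)$, and then $(2)\Leftrightarrow(3)$, treating the last equivalence as the substantive part. The implications between (1) and (2) are purely a matter of translation by the left-multiplication automorphisms of \autoref{prop:left_multiplication_automorphism}. For $(2)\Rightarrow(1)$ I would take $a=e$, since $e/\psi=H=eH$ already exhibits $\mathbf{H}$ as $\lambda$-convex. For $(1)\Rightarrow(2)$, given $a$ and $\theta$ with $a/\theta=aH$, I would set $\psi\coloneqq L_{a^{-1}}(\theta)$; this is again a congruence because $L_{a^{-1}}\in\autg(\mathbf{G}_n(\lambda))$, and $e/\psi=L_{a^{-1}}(a/\theta)=a^{-1}aH=H$.

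The key reduction for $(2)\Leftrightarrow(3)$ is to replace an arbitrary witnessing congruence by one with a single nontrivial block. Consider $\theta_H\coloneqq\cg(\{\,(e,h)\mid h\in H\,\})$. Because $\mathbf{A}$ is conservative, a congruence generated by pairs all lying in one block can only enlarge that block: in any compatibility instance, conservativity forces the output pair $(f(\dots,x,\dots),f(\dots,y,\dots))\in\theta_H$ coming from $(x,y)\in\theta_H$ to be either trivial or to attach a fresh element to the block already containing $e$. Hence $\theta_H$ has at most one nontrivial block, namely $e/\theta_H$, and $e/\theta_H\supseteq H$. If (2) holds, witnessed by $\psi$, then every generating pair of $\theta_H$ lies in $\psi$, so $\theta_H\le\psi$ and therefore $e/\theta_H\subseteq e/\psi=H$; thus $e/\theta_H=H$, and $\theta_H$ is exactly the partition $\psi_H^{0}$ whose only nontrivial class is $H$. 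In this way (2) becomes equivalent to the single assertion that $\psi_H^{0}$ is a congruence.

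Next I would identify the compatibility of $\psi_H^{0}$ with condition (3). Using essential polyadicity I pass to the induced map on sets, and using conservativity the winner of any configuration lies among its entries, so its $\psi_H^{0}$-block is either $H$ or a singleton. Testing compatibility then amounts to varying the entries that lie in $H$ while holding the entries outside $H$ fixed: $\psi_H^{0}$ is a congruence precisely when, for each $1\le k\le n-1$ and each fixed family of nonidentity cosets, the block of the winner is unchanged as the $H$-entries vary. Normalizing one entry to $e$ by a left-multiplication automorphism recasts this as a statement comparing $e$ against a $k$-set $\{b_1h_1,\dots,b_kh_k\}$, and conservativity forces the dichotomy ``$e\to\{b_1h_1,\dots,b_kh_k\}$'' versus ``$\{b_1h_1,\dots,b_kh_k\}\to e$''. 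The one remaining point is the side on which the cosets appear: the compatibility computation naturally produces a right-coset uniformity, whereas (3) is phrased with the left cosets $b_iH$. For $k=1$ these are interchanged by the antisymmetry of dominance in a regular \(\rps\) magma, namely that for $v\neq e$ exactly one of $e\to v$ and $v\to e$ holds, so that the domination set of $e$ and its inverse partition $G\setminus\{e\}$; taking inverses carries a monochromatic left coset to a monochromatic right coset and back. For general $k$ this inversion is replaced by the obverse and $\sgn_n$ structure already used in \autoref{sec:tournaments}. Granting this, $(3)\Rightarrow(2)$ follows because (3) supplies the uniform dominance, making $\psi_H^{0}$ a congruence with $e/\psi_H^{0}=H$, while $(2)\Rightarrow(3)$ follows because (2) makes $\psi_H^{0}$ a congruence, whose compatibility is exactly the uniform dominance recast as (3).

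The hard part will be the bookkeeping in the third paragraph: verifying that conservativity and essential polyadicity genuinely reduce compatibility of $\psi_H^{0}$ to the single family of configurations appearing in (3), uniformly across all arities $1\le k\le n-1$ and allowing several entries in $H$, repeated entries, and coinciding cosets, and then matching the resulting right-coset condition to the left-coset formulation of (3) through the obverse structure. The normalization steps via $L_a$ together with the determination of which block the winner occupies are where the argument must be made airtight.
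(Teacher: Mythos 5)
Your architecture is the paper's: (1) and (2) are interchanged by the left-multiplication automorphisms exactly as in the paper's proof, and the paper likewise reduces (2) to the relation whose only nontrivial class is \(H\) (its \(\theta_e\)), passes to the coset partition \(\bigcup_{a\in G}a\theta_e\), and asserts that (3) is its substitution property. One of your worries is spurious: there is no left-versus-right coset mismatch, since as \(h_i\) ranges over \(H\) the element \(b_ih_i\) ranges over the left coset \(b_iH\), which is a block of the partition \(\{\,aH\mid a\in G\,\}\), and \(L_c\) carries the block \(b_iH\) to the block \(cb_iH\); no inversion or obverse manipulation is needed to line (3) up with that partition.

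The step you defer as bookkeeping is, however, the entire content of \((3)\Rightarrow(2)\), and it is not mere bookkeeping. Every translate of a configuration from (3) constrains a prospective winner \(c\) only against sets drawn from cosets other than \(cH\). For \(n\le3\) every played set meeting at least two cosets has some coset with a unique representative, which can serve as the pivot, and the reduction goes through; but for \(n\ge4\) consider \(U=\{e,h,q_1,q_2\}\) with \(h\in H\setminus\{e\}\) and \(q_1\neq q_2\) in a single coset \(qH\neq H\). Every member of \(\obv(\{h,q_1,q_2\})\) contains an element of \(H\setminus\{e\}\) (namely \(h\), \(h^{-1}\), or \(q_i^{-1}q_j\)), so none has the form \(\{b_1h_1,b_2h_2,b_3h_3\}\) with all \(b_i\notin H\), and condition (3) leaves \(\lambda\) unconstrained on that obverse class. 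Consequently nothing in your argument (or in the paper's one-sentence version of it) forces \(g(\{e,h,q_1,q_2\})\) and \(g(\{e,h',q_1,q_2\})\) into the same coset, which is exactly what compatibility of the coset partition requires of the tuples \((e,h,q_1,q_2)\) and \((e,h',q_1,q_2)\); the smallest case in scope is \(m=25\), \(n=4\), \(\lvert H\rvert=5\). You have located the crux correctly, but to close it you need either an argument handling played sets in which every represented coset contributes at least two elements, or a strengthening of what you extract from (3).
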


\begin{proof}
To see that (1) implies (2) suppose that \(\mathbf{H}\) is \(\lambda\)-convex. We have that there exists some \(a\in G\) such that \(a/\theta=aH\) for some \(\theta\in\con(\mathbf{G}_n(\lambda))\). Applying \(L_{a^{-1}}\) we see that \(e/(a^{-1}\theta)=H\) so we can take \(\psi=a^{-1}\theta\).

To see that (2) implies (3) note that if \(e/\psi=H\) then we can define a congruence \(\theta_e\) where the only nontrivial equivalence class is \(H\). It follows that \(\theta\coloneqq\bigcup_{a\in G}a\theta_e\) is a congruence of \(\mathbf{G}_n(\lambda)\). Condition (3) is implied by \(\theta\) having the substitution property.

To see that (3) implies (1) note that if (3) holds then the equivalence relation \(\theta\) defined previously is a congruence and hence \(\mathbf{H}\) is \(\lambda\)-convex.
\end{proof}

The congruence lattice of \(\mathbf{G}_n(\lambda)\) is thus determined by those subgroups of \(\mathbf{G}\) which are \(\lambda\)-convex. For a fixed \(\lambda\) such subgroups must form a chain.

\begin{theorem}
Suppose that \(\mathbf{H},\mathbf{K}\le\mathbf{G}\) are both \(\lambda\)-convex. We have that \(\mathbf{H}\le\mathbf{K}\) or \(\mathbf{K}\le\mathbf{H}\).
\end{theorem}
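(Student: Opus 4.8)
The plan is to argue by contradiction using the characterization of $\lambda$-convexity from the preceding proposition. Suppose $\mathbf{H}$ and $\mathbf{K}$ are incomparable, and fix $h\in H\setminus K$ and $k\in K\setminus H$. By that proposition I may assume that the partitions of $\mathbf{A}\coloneqq\mathbf{G}_n(\lambda)$ into left cosets of $H$ and of $K$ are congruences $\theta_H,\theta_K$ (with $e/\theta_H=H$, $e/\theta_K=K$), and that the third condition of the proposition holds for both $\mathbf{H}$ and $\mathbf{K}$. By the structure theorem for congruences proved above, the classes of $\theta_H$ are exactly the cosets $aH$.

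First I would record a bi-invariance lemma for the out-set $S\coloneqq\{\,v\in G\setminus\{e\}\mid e\to v\,\}$. Since $\mathbf{A}$ is conservative and $\mathbf{G}$ embeds in $\autg(\mathbf{A})$ by left multiplication (\autoref{prop:left_multiplication_automorphism}), domination is left-invariant: $u\to v$ iff $e\to u^{-1}v$, and for $v\neq e$ exactly one of $e\to v$, $e\to v^{-1}$ holds, so $S\sqcup S^{-1}=G\setminus\{e\}$ and $\lvert S\rvert=\frac12(\lvert G\rvert-1)$. The third condition of the proposition with $k=1$ says every nontrivial right coset $vH$ (with $v\notin H$) lies entirely in $S$ or entirely in $S^{-1}$; applying the substitution property of $\theta_H$ to $e\mathrel{\theta_H}h'$ for $h'\in H$, together with essential polyadicity, gives the same for each left coset $Hv$. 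Hence $S\setminus H$ is a union of $H$-double cosets, and similarly $S\setminus K$ is a union of $K$-double cosets. Restricting to $H$, the induced sub-magma is again a regular $\rps$ magma, so $\lvert S\cap H\rvert=\frac12(\lvert H\rvert-1)$, and likewise for $K$.

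It is convenient to reduce to $\mathbf{G}=\langle H\cup K\rangle$. Because $\mathbf{A}$ is conservative, every subgroup is a subuniverse, and $\theta_H,\theta_K$ restrict to congruences of the sub-magma on $\langle H\cup K\rangle$ with $e$-classes $H$ and $K$; that sub-magma is itself a regular $\rps$ magma, since its order divides the odd number $\lvert G\rvert$ and $n<\varpi(\lvert G\rvert)\le\varpi$ of that order, so $H$ and $K$ remain $\lambda$-convex there. Passing to a counterexample of least order, I may therefore assume $\langle H\cup K\rangle=\mathbf{G}$ with $H,K$ proper.

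Finally I would extract the contradiction from the two coset-uniformity conditions. Because $H\not\le K$, no nontrivial left coset $aH\subseteq S$ can lie inside a single $K$-coset (that would force $H\le K$), so each such $aH$ meets at least two distinct $K$-cosets. Using $K$-convexity to promote every $K$-coset meeting $S$ to a full subset of $S$, and combining this with the exact count $\lvert S\cap H\rvert=\frac12(\lvert H\rvert-1)$ and the global constraint $S\sqcup S^{-1}=G\setminus\{e\}$, I would locate an element $x$ forced into $S$ by $K$-uniformity whose inverse $x^{-1}$ already lies in $S$ (coming from the straddling $H$-coset), contradicting $S\cap S^{-1}=\varnothing$. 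The main obstacle is precisely this last step: making the counting rigorous for an arbitrary, possibly nonabelian, $\mathbf{G}$, where one must control how an $H$-(double-)coset sitting inside $S$ spreads across several $K$-(double-)cosets and show that the resulting forced memberships cannot be reconciled with $S$ being an orientation of size $\frac12(\lvert G\rvert-1)$.
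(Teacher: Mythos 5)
Your setup is consistent with the paper's framework---arguing by contradiction from incomparability, using the coset-uniformity consequence of $\lambda$-convexity (for $aH\neq bH$, either all of $aH$ dominates all of $bH$ or conversely, by applying the substitution property of the coset congruence to a conservative operation), and the left-invariance of domination. But the proof is not actually completed: the entire contradiction is deferred to a counting scheme (``locate an element $x$ forced into $S$ by $K$-uniformity whose inverse already lies in $S$'') that you yourself identify as the unresolved obstacle. No such $x$ is exhibited, no argument is given that the forced memberships actually conflict with $S\cap S^{-1}=\varnothing$, and the auxiliary reductions you set up---passing to $\langle H\cup K\rangle$, the double-coset decomposition of $S$, the count $\lvert S\cap H\rvert=\tfrac{1}{2}(\lvert H\rvert-1)$---do not by themselves produce one. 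So the key step is missing, and it is not clear the counting route can be pushed through for general nonabelian $\mathbf{G}$.

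The gap closes without any counting: the two uniformity conditions chain together in two moves. Pick $h\in H\setminus K$ and $k\in K\setminus H$; replacing each by its inverse if necessary (inverses stay in $H\setminus K$ and $K\setminus H$), assume $h\to e$ and $k\to e$. Since $h\in hK$ dominates $e\in K$ and $hK\neq K$, the $\lambda$-convexity of $\mathbf{K}$ forces every element of $hK$ to dominate every element of $K$; in particular $hk^{-1}\to e$, equivalently $e\to kh^{-1}$. Now $kh^{-1}\in kH$ with $kH\neq H$, so the $\lambda$-convexity of $\mathbf{H}$ forces every element of $H$ to dominate every element of $kH$; in particular $e\to k$, contradicting $k\to e$. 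In your notation the offending element is simply $x=k$: the $K$-uniformity of the coset $hK$ followed by the $H$-uniformity of the coset $kH$ puts $k$ into $S$ while $k^{-1}$ is already there. This is exactly the paper's argument, and it needs neither the minimal-counterexample reduction nor the size of $S\cap H$.
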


\begin{proof}
Suppose that \(\mathbf{H}\) and \(\mathbf{K}\) are incomparable. That is, let \(h\in H\setminus K\) and let \(k\in K\setminus H\). Without loss of generality take \(h\to e\) and \(k\to e\). We have that \(hk^{-1}\to e\) since everything in the coset \(hK\) must dominate everything in \(K\) in order for \(K\) to be \(\lambda\)-convex. It follows that \(e\to kh^{-1}\) so everything in \(H\) dominates everything in \(kH\). This implies that \(e\to k\), a contradiction.
\end{proof}

As an immediate consequence we have the following proposition.

\begin{proposition}
If \(\mathbf{H}\) is a \(\lambda\)-convex subgroup of \(\mathbf{G}\) then \(\mathbf{H}\) is \(\aut_\lambda(\mathbf{G})\)-characteristic. That is, for any \(\varphi\in\aut_\lambda(\mathbf{G})\) we have that \(\varphi(\mathbf{H})=\mathbf{H}\). In particular, if \(\lambda\) is correlated then any \(\lambda\)-convex subgroup is normal.
\end{proposition}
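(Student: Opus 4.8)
The plan is to combine the embedding $\autg_\lambda(\mathbf{G})\hookrightarrow\autg(\mathbf{G}_n(\lambda))$ furnished by \autoref{prop:lambda_automorphism} with the standard fact that a magma automorphism induces an automorphism of the congruence lattice carrying each congruence class to a congruence class. First I would invoke the equivalence of conditions (1) and (2) from the preceding proposition: since $\mathbf{H}$ is $\lambda$-convex, there is a congruence $\psi\in\con(\mathbf{G}_n(\lambda))$ with $e/\psi=H$.

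Next, fix $\varphi\in\aut_\lambda(\mathbf{G})$. By \autoref{prop:lambda_automorphism} we may regard $\varphi$ as an automorphism of the magma $\mathbf{G}_n(\lambda)$, so the image relation $\varphi(\psi)\coloneqq\{\,(\varphi(x),\varphi(y))\mid(x,y)\in\psi\,\}$ is again a congruence of $\mathbf{G}_n(\lambda)$, and $\varphi$ sends each $\psi$-class to a $\varphi(\psi)$-class. Because $\varphi$ is a group automorphism it fixes $e$, whence the $\varphi(\psi)$-class of $e$ is $\varphi(e/\psi)=\varphi(H)$. Since $\varphi(H)$ is a subgroup of $\mathbf{G}$ (being the image of a subgroup under an automorphism) and satisfies $e/\varphi(\psi)=\varphi(H)$, the equivalence of (2) and (1) shows that $\varphi(\mathbf{H})$ is itself $\lambda$-convex.

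The finishing move is a cardinality argument. Both $\mathbf{H}$ and $\varphi(\mathbf{H})$ are $\lambda$-convex, so the preceding theorem forces them to be comparable, say $\mathbf{H}\le\varphi(\mathbf{H})$ or $\varphi(\mathbf{H})\le\mathbf{H}$. As $\varphi$ is a bijection we have $\lvert\varphi(\mathbf{H})\rvert=\lvert\mathbf{H}\rvert$, and a containment between finite subgroups of equal order is an equality, so $\varphi(\mathbf{H})=\mathbf{H}$. This establishes that $\mathbf{H}$ is $\aut_\lambda(\mathbf{G})$-characteristic. For the final assertion, if $\lambda$ is correlated then $\operatorname{\mathbf{Inn}}(\mathbf{G})\le\autg_\lambda(\mathbf{G})$, so every conjugation $c_b$ is a $\lambda$-automorphism and hence fixes $\mathbf{H}$; that is, $bHb^{-1}=H$ for all $b\in G$, which is exactly normality of $\mathbf{H}$.

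The only step that merits care, rather than routine order bookkeeping, is the verification that the image of a congruence under the magma automorphism $\varphi$ is again a congruence whose $e$-class is precisely $\varphi(H)$; the remaining pieces are direct applications of the chain theorem and the convexity characterization already in hand, so I anticipate no serious obstacle.
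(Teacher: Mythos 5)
Your proof is correct and follows essentially the same route as the paper: show \(\varphi(\mathbf{H})\) is \(\lambda\)-convex, then apply the chain theorem and a cardinality count. The only difference is that the paper asserts the \(\lambda\)-convexity of \(\varphi(\mathbf{H})\) in one line, whereas you justify it carefully by transporting the witnessing congruence along the magma automorphism from \autoref{prop:lambda_automorphism} --- a detail worth having spelled out.
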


\begin{proof}
Since \(\varphi\) is a \(\lambda\)-automorphism we have that \(\varphi(\mathbf{H})\) is \(\lambda\)-convex if and only if \(\mathbf{H}\) is \(\lambda\)-convex. By the previous theorem we have that \(\mathbf{H}\le\varphi(\mathbf{H})\) or \(\varphi(\mathbf{H})\le\mathbf{H}\). Since \(\lvert\mathbf{H}\rvert=\lvert\varphi(\mathbf{H})\rvert\) we find that \(\mathbf{H}=\varphi(\mathbf{H})\).
\end{proof}

We can now give the structure of the congruence lattice of \(\mathbf{G}_n(\lambda)\).

\begin{definition}[\(\lambda\)-coset poset]
Given \(\lambda\in\sgn_n(\mathbf{G})\) set
	\[
		P_\lambda\coloneqq\{\,aH\mid a\in G\text{ and }\mathbf{H}\text{ is }\lambda\text{-convex}\,\}
	\]
and define the \emph{\(\lambda\)-coset poset} to be \(\mathbf{P}_\lambda\coloneqq(P_\lambda,\subset)\).
\end{definition}

Dilworth showed that the maximal antichains of a finite poset form a distributive lattice. We follow Freese's treatment of this\cite{freese}. Given a finite poset \(\mathbf{P}\coloneqq(P,\le)\) let \(\mathbf{L}(\mathbf{P})\) be the lattice whose elements are maximal antichains in \(\mathbf{P}\) where if \(U,V\in L(\mathbf{P})\) then we say that \(U\le V\) in \(\mathbf{L}(\mathbf{P})\) when for every \(u\in U\) there exists some \(v\in V\) such that \(u\le v\) in \(\mathbf{P}\).

\begin{theorem}
We have that \(\conl(\mathbf{G}_n(\lambda))\cong\mathbf{L}(\mathbf{P}_\lambda)\).
\end{theorem}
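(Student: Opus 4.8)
The plan is to identify both lattices with the lattice of a certain kind of partition of $G$ and then check that the orders agree. Define $\Phi\colon\con(\mathbf{G}_n(\lambda))\to\mathbf{L}(\mathbf{P}_\lambda)$ by sending a congruence $\theta$ to its set of blocks $\{\,a/\theta\mid a\in G\,\}$. The preceding results show that each block $a/\theta$ equals a coset $aH$ of a $\lambda$-convex subgroup $\mathbf{H}$, so every block lies in $P_\lambda$. Two distinct cosets of $\lambda$-convex subgroups are disjoint exactly when they are incomparable in $\mathbf{P}_\lambda$ (using that the $\lambda$-convex subgroups form a chain), so the blocks of $\theta$, being pairwise disjoint, form an antichain; since they cover $G$, no further coset can be adjoined, and $\Phi(\theta)$ is a maximal antichain. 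Conversely I would first record that a maximal antichain $M$ of $\mathbf{P}_\lambda$ is automatically a partition of $G$: if some $a\in G$ lay in no member of $M$, then the minimal element $\{a\}=a\{e\}$ would be incomparable to all of $M$ and could be adjoined, contradicting maximality. Thus $M\mapsto\theta_M$, the equivalence relation whose classes are the members of $M$, is the candidate inverse $\Psi$, and $\Phi,\Psi$ are mutually inverse once we know $\Psi(M)$ is a congruence.

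The crux, and the step I expect to be the main obstacle, is showing that $\theta_M$ has the substitution property for every maximal antichain $M$. I would verify this one coordinate at a time, so it suffices to show that replacing a single argument $u$ by some $v$ lying in the same block of $M$ does not change the block of the output. Because left multiplications are automorphisms of $\mathbf{G}_n(\lambda)$ and carry maximal antichains to maximal antichains, I may translate by $L_{a^{-1}}$ and assume the block in question is a $\lambda$-convex subgroup $\mathbf{H}$ containing $e$, with $u,v\in H$. Writing the output via essential polyadicity as $w=g(S\cup\{u\})$ and $w'=g(S\cup\{v\})$ for the set $S$ of remaining arguments, the uniform congruence $\theta_H$ (which exists precisely because $\mathbf{H}$ is $\lambda$-convex) already gives $wH=w'H$. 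If $w\in H$ then $w'\in wH=H$ and both lie in the single block $H$. If instead $w\notin H$, then $w$ is the dominator and lies in $S$; the $\lambda$-convexity condition in the form ``for $w\notin H$, either $w\to h$ for all $h\in H$ or $h\to w$ for all $h\in H$'' together with $w\to u$ forces $w\to v$, whence $w$ also dominates $S\cup\{v\}$ and $w'=w$. In every case the output block is preserved, so $\theta_M$ is a congruence.

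Finally I would check that $\Phi$ is an order isomorphism. For congruences, $\theta\subseteq\theta'$ means the partition of blocks of $\theta$ refines that of $\theta'$, that is, every block of $\theta$ is contained in a block of $\theta'$; this is precisely $\Phi(\theta)\le\Phi(\theta')$, since the order on $\mathbf{L}(\mathbf{P}_\lambda)$ requires each block of the smaller antichain to lie below some block of the larger. As $\Phi$ is a bijection preserving and reflecting order between two lattices, it is a lattice isomorphism, yielding $\conl(\mathbf{G}_n(\lambda))\cong\mathbf{L}(\mathbf{P}_\lambda)$. The one point demanding care throughout is the interplay between left and right cosets when $\mathbf{H}$ fails to be normal; the uniformity I invoke comes from the $k=1$ instance of the $\lambda$-convexity condition after a single left translation, which is exactly why consistently using left cosets and left-multiplication automorphisms keeps the argument coherent.
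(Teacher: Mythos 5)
Your overall architecture matches the paper's: the same map \(\theta\mapsto\{\,a/\theta\mid a\in G\,\}\), the same observation that its image consists of maximal antichains of \(\mathbf{P}_\lambda\), the same inverse via the partition determined by a maximal antichain, and the same order comparison at the end. Your explicit check that a maximal antichain must cover \(G\) (otherwise adjoin a singleton \(a\{e\}\)) is a detail the paper leaves implicit, and it is correct, as is your order-isomorphism paragraph.

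The gap is in Case 2 of your substitution check. From \(g(S\cup\{u\})=w\) with \(w\in S\) you infer the binary domination \(w\to u\), apply the \(k=1\) instance of the \(\lambda\)-convexity condition to get \(w\to v\), and conclude \(g(S\cup\{v\})=w\). Neither the first nor the last inference is valid when \(n>2\): \(n\)-ary domination of an edge is not determined by pairwise domination (in the paper's \(\rps(5,3)\) example one has \(g(\{0,1,2\})=0\) while \(g(\{0,1\})=1\)). Even for \(n=2\), the \(k=1\) condition asserts that \(e\)'s domination is constant on left cosets \(bH\), whereas what you need is that a fixed \(w\notin H\) dominates either all of \(H\) or none of it; after left translation this concerns \(u^{-1}w\) and \(v^{-1}w\), which need not lie in a common left coset of \(H\) when \(H\) is not normal --- exactly the left/right issue you flag but do not resolve. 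The fix, and what the paper actually uses, is the congruence whose \emph{only} nontrivial class is \(H\), which exists because \(\mathbf{H}\) is \(\lambda\)-convex (by the proposition characterizing \(\lambda\)-convexity); applying its substitution property to the pair \((u,v)\) gives directly that \(w=w'\) or \(w,w'\in H\), settling both of your cases at once, and the paper then only needs that a union of such single-block congruences whose blocks partition \(G\) is again a congruence. The uniform congruence \(\theta_H\) you invoke yields only \(wH=w'H\), which is too weak: if the block of \(M\) containing \(w\) is a coset of a \(\lambda\)-convex subgroup properly contained in \(\mathbf{H}\) (e.g.\ \(M\) consists of \(H\) together with singletons), then \(wH=w'H\) does not place \(w\) and \(w'\) in the same block.
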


\begin{proof}
Define \(h\colon\con(\mathbf{G}_n(\lambda))\to L(\mathbf{P}_\lambda)\) by \(h(\theta)\coloneqq\{\,a/\theta\mid a\in G\,\}\). By our previous work we have that \(h(\theta)\) is a subset of \(P_\lambda\). Note that since \(\theta\) is an equivalence relation we have that \(h(\theta)\) is an antichain and since every member of \(G\) must belong to some equivalence class under \(\theta\) we have that \(h(\theta)\) is a maximal antichain.

We have that \(h\) is a well-defined map. It remains to show that \(h\) is an isotone bijection with isotone inverse. Certainly \(h\) is injective, for if \(h(\theta)=h(\psi)\) then \(\theta\) and \(\psi\) determine the same partition of \(G\), which implies that \(\theta=\psi\). To see that \(h\) is surjective consider some maximal antichain \(U\) in \(\mathbf{P}_\lambda\). For each element \(aH\in U\) define \(\theta_{aH}\) to be the equivalence relation on \(G\) whose only nontrivial equivalence class is \(aH\). We have that \(\theta_{aH}\) is a congruence and thus so is \(\theta\coloneqq\bigcup_{aH\in U}\theta_{aH}\). For this choice of \(\theta\) we have that \(h(\theta)=U\) so \(h\) is surjective.

To see that \(h\) and \(h^{-1}\) are isotone note that one equivalence relation contains another precisely when the corresponding partition of one contains the partition of the other. This is equivalent to the given order on the antichains of \(\mathbf{P}_\lambda\).
\end{proof}

Since every lattice of maximal antichains is distributive we have that the finite regular \(\rps\) magmas are all congruence-distributive.

Our analysis also yields a family of simple magmas.

\begin{theorem}
Suppose that \(\mathbf{G}=\Z_{p^k}\) for a prime \(p\) and \(n<p\). There exists a \(\lambda\in\sgn_n(\mathbf{G})\) for which \(\mathbf{G}_n(\lambda)\) is simple.
\end{theorem}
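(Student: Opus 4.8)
The plan is to read off simplicity from the isomorphism $\conl(\mathbf{G}_n(\lambda))\cong\mathbf{L}(\mathbf{P}_\lambda)$. Since the subgroups of $\Z_{p^k}$ form a chain and every congruence class is a coset of a $\lambda$-convex subgroup, the magma $\mathbf{G}_n(\lambda)$ is simple exactly when $\{e\}$ and $G$ are the \emph{only} $\lambda$-convex subgroups: in that case $P_\lambda$ consists of the singletons $\{a\}$ together with $G$, and its only maximal antichains are the set of all singletons and $\{G\}$, so $\mathbf{L}(\mathbf{P}_\lambda)$ is the two-element lattice. Thus I would reduce the theorem to producing a $\lambda\in\sgn_n(\mathbf{G})$ under which none of the proper nontrivial subgroups $K_i\coloneqq p^i\Z_{p^k}$ with $1\le i\le k-1$ is $\lambda$-convex. (When $k=1$ the group $\Z_p$ has no proper nontrivial subgroup, so it is simple for any $\lambda$; the work is only for $k\ge2$, where $n\ge2$ lets us use the length-one case of the convexity criterion below.)

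I would force the failure of convexity of each $K_i$ using only the $1$-set part of $\lambda$ and condition (3) of the convexity proposition: $K_i$ is not $\lambda$-convex as soon as some nontrivial coset $b+K_i$ contains one element dominated by $e$ and one element dominating $e$. The $1$-set data of $\lambda$ is exactly a choice, for each pair $\{c,-c\}$ with $c\neq e$, of whether $e\to c$ or $c\to e$ holds; encode this by a set $S$ picking one representative of each pair, with $c\in S$ meaning $e\to c$, and leave the higher-arity values of $\lambda$ arbitrary. The crucial point is that because $\lvert G\rvert=p^k$ is odd, every nontrivial coset $b+K_i$ is disjoint from its negative $-b+K_i$: indeed $b+K_i=-b+K_i$ forces $2b\in K_i$, and since $2$ is a unit this gives $b\in K_i$. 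Hence no pair $\{c,-c\}$ lies inside a single nontrivial coset, so I am free to split the $S$-membership of such a coset as I wish.

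Concretely I would set, for each $1\le i\le k-1$, the element $p^{i-1}\in S$ (so $e\to p^{i-1}$) and $p^{i-1}+p^i\notin S$ (so $(p^{i-1}+p^i)\to e$). Both lie in the nontrivial coset $p^{i-1}+K_i$, which is therefore bichromatic and witnesses, via $b_1=p^{i-1}$, that $K_i$ is not $\lambda$-convex. The one thing that genuinely needs checking is global consistency: the $2(k-1)$ designated elements $p^{i-1}$ and $p^{i-1}+p^i$ must lie in pairwise distinct pairs $\{c,-c\}$ so that a single $S$ realizes all the constraints at once. This is a short computation using that these elements are small and have one or two nonzero base-$p$ digits, so for $p\ge3$ none is equal to or the negative of another. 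Filling in the remaining pairs arbitrarily and extending $\lambda$ arbitrarily on sets of size at least two yields the required $\lambda$, and the coset-poset description then gives simplicity. I expect this simultaneous-consistency step to be the only real obstacle; the rest is a direct application of the structure theorem for $\conl(\mathbf{G}_n(\lambda))$.
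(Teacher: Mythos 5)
Your proposal is correct and is essentially the paper's own argument: the paper likewise kills each intermediate subgroup $\mathbf{H}_i$ by choosing a nontrivial coset of $H_i$ inside $H_{i+1}$ and assigning the $1$-set part of $\lambda$ so that $e$ dominates one element of that coset and is dominated by another, then invoking the coset/congruence correspondence. The only difference is that you make the coset representatives explicit ($p^{i-1}$ and $p^{i-1}+p^i$) and verify that the resulting pairs $\{c,-c\}$ are pairwise distinct, a consistency check the paper leaves implicit.
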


\begin{proof}
Order the nontrivial subgroups of \(\mathbf{G}\) as \(\mathbf{H}_1\le\cdots\le\mathbf{H}_k=\mathbf{G}\). For each \(1\le i\le k-1\) choose a coset \(a+H_i\) of \(H_i\) other than \(H_i\) itself which lies in \(H_{i+1}\). Choose another element \(b\in a+H_i\) with \(b\neq a\). Set \(\lambda(\{a,-a\})\coloneqq a\) and \(\lambda(\{b,-b\})\coloneqq-b\). We have that \(\mathbf{H}_i\) is not \(\lambda\)-convex for \(1\le i\le k-1\). It follows that \(\mathbf{G}_n(\lambda)\) has no nontrivial proper \(\lambda\)-convex subgroups for this choice of \(\lambda\) so \(\mathbf{G}_n(\lambda)\) is simple.
\end{proof}

\printbibliography

\end{document}